\documentclass[12pt,oneside]{amsart}
\usepackage[doi=false,isbn=false,url=false,style=alphabetic]{biblatex}
\bibliography{citations.bib}
\input{commands.tex}
\usepackage{etoolbox}
\usepackage{enumitem}[cleveref]
\usepackage{amsfonts}
\usepackage{float}
\usepackage{epigraph}
\setlength\epigraphwidth{.9\textwidth}

\setlength{\parindent}{0pt}
\setlength{\parskip}{0.5em}

\usepackage{xcolor}
\definecolor{darkgreen}{rgb}{0,0.30,0}

\definecolor{darkred}{rgb}{0.75,0,0}

\definecolor{darkblue}{rgb}{0,0,0.6} 

\usepackage[colorlinks,citecolor=darkred,linkcolor=darkred,urlcolor=darkblue]{hyperref}
\usepackage{cleveref}

\def\makeautorefname#1#2{\expandafter\def\csname#1autorefname\endcsname{#2}}

\makeautorefname{eqn}{Equation}%
\makeautorefname{sec}{Section}%
\makeautorefname{subsec}{Subsection}%
\makeautorefname{footnote}{footnote}%
\makeautorefname{item}{item}%
\makeautorefname{figure}{Figure}%
\makeautorefname{table}{Table}%
\makeautorefname{wraptab}{wraptable}%
\makeautorefname{part}{Part}%
\makeautorefname{app}{Appendix}%
\makeautorefname{cla}{claim}%
\makeautorefname{assump}{assumption}%
\makeautorefname{conj}{conjecture}%
\makeautorefname{cor}{corollary}%
\makeautorefname{conv}{convention}%
\makeautorefname{cex}{counterexample}%
\makeautorefname{cexs}{counterexamples}%
\makeautorefname{dig}{digression}%
\makeautorefname{disc}{discussion}%
\makeautorefname{def}{definition}%
\makeautorefname{ex}{example}%
\makeautorefname{exs}{examples}%
\makeautorefname{fac}{fact}%
\makeautorefname{intu}{intuition}%
\makeautorefname{lem}{lemma}%
\makeautorefname{meta}{metathm}%
\makeautorefname{nota}{notation}%
\makeautorefname{note}{note}%
\makeautorefname{warn}{warn}%
\makeautorefname{prop}{proposition}%
\makeautorefname{rmk}{remark}%
\makeautorefname{strat}{strategy}%
\makeautorefname{term}{terminology}%
\makeautorefname{thm}{theorem}%
\makeautorefname{upsh}{upshot}%
%

\theoremstyle{definition}
\numberwithin{equation}{section}
\newtheorem{theorem}[equation]{Theorem}

\newtheorem{convention}[equation]{Convention}
\newtheorem{corollary}[equation]{Corollary}

\newtheorem{definition}[equation]{Definition}

\newtheorem{example}[equation]{Example}

\newtheorem{lemma}[equation]{Lemma}

\newtheorem{notation}[equation]{Notation}

\newtheorem{proposition}[equation]{Proposition}
\newtheorem{remark}[equation]{Remark}

\makeatletter
\let\c@corollary=\c@theorem
\let\c@proposition=\c@theorem
\let\c@lemma=\c@theorem
\let\c@assumption=\c@theorem
\let\c@conjecture=\c@theorem
\let\c@definition=\c@theorem
\let\c@example=\c@theorem
\let\c@remark=\c@theorem
\let\c@notation=\c@theorem
\let\c@equation\c@theorem
\let\c@strategy\c@theorem
\makeatother

\let\sqtimes\boxtimes

\providecommand{\Pic}{\text{Pic}}

\renewcommand{\O}{\mathcal{O}}
\providecommand{\Th}{\text{Th}}

\providecommand{\MW}{\text{MW}}

\let\del\partial

\providecommand{\Sm}{\text{Sm}}

\providecommand{\A}{\mathbb{A}}

\renewcommand{\Ch}{\text{Ch}}
\providecommand{\CHW}{\widetilde{\text{CH}}}
\providecommand{\BSp}{\text{BSp}}

\providecommand{\BGm}{\text{B}\mathbb{G}_{\rm m}}
\providecommand{\et}{\text{\'{e}t}}

\providecommand{\Bdet}{\text{Bdet}}

\renewcommand{\O}{\mathcal{O}}

\title[Chow--Witt rings for metalinear bundles]{The {C}how--{W}itt rings of the classifying spaces of quadratically oriented bundles}

\author{Thomas Brazelton and Matthias Wendt}
\date{\today}

\begin{document}

\begin{abstract}
In this paper we compute the Chow--Witt rings of the classifying space ${\rm BSL}_n^c$ of quadratically oriented vector bundles of rank $n$. We also discuss the corresponding quadratically-oriented cobordism spectrum ${\rm MSL}^c$ and show that it is equivalent to $\rm{MSL}$ after inverting $\eta$.
\end{abstract}
\maketitle

\setcounter{tocdepth}{1}
\tableofcontents{}

\section{Introduction}

\textit{Quadratically oriented vector bundles}\footnote{Also called \textit{metalinear vector bundles} in \cite[\S 3.3.2]{AHW2}.}, or vector bundles whose determinant admits a square root, are natural objects of study due to their importance in quadratically enriched enumerative geometry. The theory of motivic cohomology theories which support quadratic orientations, so-called $\SL^c$\textit{-oriented} theories, is well-studied in motivic homotopy theory \cite{Ananyevskiy}. Quadratically oriented bundles are further related to \textit{relatively oriented} vector bundles, which are well-studied objects in motivic homotopy theory \cite{BM,Fasel-CHW,Morel,AF-comparing,EulerBW3} as well as in real enumerative geometry \cite{OkTel}.

With these perspectives in mind, the classifying space for quadratically oriented vector bundles $\BSL_n^c$ is a natural object to investigate, one reason being that its cohomology is the natural home for characteristic classes of quadratically oriented bundles.

As usual when dealing with torsors and classifying spaces in an algebrao-geometric or motivic setting, there are different topologies and thus potentially different classifying spaces to consider. Fortunately, the group $\SL_n^c$ is special in the sense of Serre, i.e., \'etale-locally trivial torsors are already Zariski-locally trivial, which means that the usual classifying spaces are all equivalent, cf.~\autoref{prop:slnc-special} and \autoref{cor:real-realization-bslnc}:
\[
  {\rm B}_{\rm Zar}\SL_n^c\simeq{\rm B}_{\rm Nis}\SL_n^c\simeq{\rm B}_\et\SL_n^c
\]
For this reason, we drop notational distinctions between these different versions and denote \emph{the} classifying space of $\SL_n^c$ simply by $\BSL_n^c$. 

As the classifying space of quadratically oriented vector bundles, the space $\BSL_n^c$ is an interpolation between $\BSL_n$ and $\BGL_n$; the properties of being oriented and quadratically oriented are indistinguishable over $\R$ (although the additional information of the square-root line bundle is still visible over $\R$). This indicates a similarity between $\BSL_n^c$ and $\BSL_n$ which we make precise in the language of ${\bf I}^j$-cohomology. On the other hand, $\BSL_n^c$ is not $\A^1$-simply connected, instead its fundamental group agrees with that of $\BGL_n$, and we might expect some similarity between $\BSL_n^c$ and $\BGL_n$, which will be visible in untwisted Witt-sheaf cohomology.

In this paper we compute the Chow--Witt rings of $\BSL_n^c$. Our methods of computation are heavily inspired by \cite{HornbostelWendt} and \cite{Wendt-Gr}. The main result is the following, see \autoref{sec:pf-main-thm}.

\begin{theorem}
  \label{thm:CHW-computation}
  Let $k$ be a field of characteristic $\neq 2$. 
  \begin{enumerate}
  \item The Chow--Witt groups of $\BSL_n^c$ are described by a pullback square
    \[
      \begin{tikzcd}
        \CHW^j(\BSL_n^c,\mathcal{L})\rar\dar\pb & \ker(\del_{j,\mathcal{L}})\dar\\
        H^j(\BSL_n^c,\mathbf{I}^j(\mathcal{L}))\rar["\rho" below] & \Ch^j(\BSL_n^c),
      \end{tikzcd}
    \]
    Here, the possible twists are $\mathcal{L}=\mathcal{O},\mathcal{O}_{\mathbb{P}^\infty}(-1)$, the trivial bundle and the square-root of the determinant line bundle\footnote{For most of the paper, we will also denote the square-root line bundle by $\Theta$, to evoke the link to theta-characteristics.} providing the orientation of the universal vector bundle. The lower left corner is cohomology of the fundamental ideal sheaf, in the upper right the boundary map is
    \[
    \partial_{\mathcal{L},j}\colon \CH^j(\BSL_n^c)\xrightarrow{\bmod 2}\Ch^j(\BSL_n^c)\xrightarrow{\beta_{\mathcal{L}}} H^{j+1}(\BSL_n^c,{\bf I}^{j+1}(\mathcal{L})).
    \]
  \item The Chow ring of $\BSL_n^c$ is described in \autoref{cor:chow-product}, the ${\bf I}$-cohomology ring is described in \autoref{thm:I-cohom-bslnc}, and the relevant kernels of boundary maps $\partial_{j,\mathcal{L}}$ are described in \autoref{lem:ker-bockstein-untwisted} and \autoref{lem:ker-bockstein-twisted}. 
  \item The Chow--Witt ring is generated by the following classes
    \begin{itemize}
    \item the even \emph{Pontryagin classes} $p_{2i}\in\CHW^{4i}(\BSL_n^c,\mathcal{O})$ for $i=1,\dots,\lfloor\frac{n-1}{2}\rfloor$, which in terms of their Witt-sheaf and Chow contributions can be written as
      \[
      p_{2i}=\left( p_{2i}, c_{2i}^2 + 2\sum_{j=\max{0,4i-n}}^{2i-1} (-1)^j c_j c_{4i-j} \right),
      \]
    \item the \emph{Euler class} $e_n\in\CHW^n(\BSL_n^c,\mathcal{O})$ for even $n$, which in the fiber product picture can be described as $e_n = (e_n,c_n)$,
    \item the \emph{Bockstein classes}
      \[
      \widetilde{\beta}_{\mathcal{L}}(\overline{c}_J)=\widetilde{\beta}_{\mathcal{L}}(\overline{c}_{2j_1}\cdots\overline{c}_{2j_l})\in\CHW^{1+\sum_{i=1}^l2j_i}(\BSL_n^c,\mathcal{L})
      \]
      for index sets $J=\left\{0<j_1<\cdots <j_l\leq\lfloor\frac{n-1}{2}\rfloor\right\}$ which can be empty if $\mathcal{L}$ is the nontrivial twist,
    \item the \emph{hyperbolic Chern classes} $H_{\mathcal{L}}(x)\in\CHW^q(\BSL_n^c,\mathcal{L})$ for $x\in \CH^q(\BSL_n^c)$. 

\end{itemize}
  \item The pullback is compatible with multiplicative structures, i.e., the fiber product description in (1) can be used to reduce computations of products of classes in Chow--Witt theory to computations in ${\bf I}$-cohomology and Chow theory. 
  \end{enumerate}
 \end{theorem}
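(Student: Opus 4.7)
The plan is to assemble the four parts of the theorem from the constituent computations referenced in the statement, following the ``arithmetic square'' methodology of \cite{HornbostelWendt} and \cite{Wendt-Gr}.

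For part (1), the backbone is the short exact sequence of Nisnevich sheaves
\[
0 \to \mathbf{I}^{j+1}(\mathcal{L}) \to \mathbf{K}^{\MW}_j(\mathcal{L}) \to \mathbf{K}^{M}_j \to 0,
\]
which, combined with the Gersten resolution identifications $\CHW^j(X,\mathcal{L}) = H^j(X,\mathbf{K}^{\MW}_j(\mathcal{L}))$ and $\CH^j(X) = H^j(X,\mathbf{K}^M_j)$, directly produces the Cartesian square of the statement: the boundary map $\partial_{j,\mathcal{L}}$ is the Bockstein for this sequence, so the image of $\CHW^j$ in $\CH^j$ is precisely $\ker(\partial_{j,\mathcal{L}})$. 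The only thing to verify is that this formal reasoning applies to $\BSL_n^c$, which reduces to a standard Totaro-style finite-dimensional approximation.

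For part (2), the plan is to carry out two parallel computations. The Chow ring $\CH^*(\BSL_n^c)$ is handled by analyzing the extension $1 \to \SL_n \to \SL_n^c \to \Gm \to 1$ (equivalently, by viewing $\BSL_n^c$ as classifying pairs of a rank-$n$ bundle together with a square root of its determinant), producing a presentation in terms of the Chern classes pulled back from $\BGL_n$ together with a square-root-of-determinant class. The $\mathbf{I}$-cohomology ring is computed via the same Gysin/Leray sequence techniques used for $\BSL_n$ and for Grassmannians in \cite{HornbostelWendt,Wendt-Gr}, done separately for the trivial twist and the twist by $\Theta$. Once these rings are in hand, the relevant kernels of the Bocksteins are extracted by identifying which Chern-class monomials have Bockstein-trivial mod-$2$ reduction.

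For part (3), each listed generator is specified as a compatible pair in the pullback of (1). The even Pontryagin and Euler classes have their Chow components dictated by the $\mathbf{I}$-cohomology classes they must lift, as indicated in the formulas in the statement, and what needs checking is that these pairs are indeed Cartesian. The Bockstein classes $\widetilde{\beta}_{\mathcal{L}}(\bar{c}_J)$ lie in the $\ker(\partial_{j,\mathcal{L}})$ factor with zero $\mathbf{I}$-component by construction. The hyperbolic classes $H_{\mathcal{L}}(x)$ arise from the multiplicative action of $h$ and map to $2x$ on the Chow side with trivial $\mathbf{I}$-component. That these classes suffice to generate $\CHW^*$ follows by combining the explicit presentations of the Chow ring and the $\mathbf{I}$-cohomology from part (2) with the pullback description: every element is a sum of a class coming from $\mathbf{I}$-cohomology (captured by the Pontryagin/Euler classes together with their products) and a class coming from $\ker(\partial)$ (captured by Bockstein and hyperbolic classes).

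The main obstacle I anticipate is the $\mathbf{I}$-cohomology computation in the twisted case, where tracking the effect of the twist $\Theta$ through the Gysin sequence for $\Gm \to \BSL_n^c \to \BGL_n$ requires careful bookkeeping of transfers, Bocksteins, and products in order to identify the multiplicative structure precisely enough to match it to the pullback description. Part (4) should then be essentially formal: because the Bockstein $\beta_{\mathcal{L}}$ is a derivation with respect to the cup product on $\Ch^*$, products in $\CHW^*$ are computed componentwise on the Chow and $\mathbf{I}$-cohomology factors, so multiplicative compatibility follows from the fact that the pullback square of (1) is constructed out of ring maps and their mod-$2$ reductions and kernels.
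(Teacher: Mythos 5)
Your overall strategy — assembling the theorem from the constituent computations and following the arithmetic-square methodology of Hornbostel--Wendt and Wendt — matches the structure of the paper, and parts (2) and (4) are handled adequately in outline. However, there are two concrete gaps.

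\textbf{Part (1): the Cartesian square is not automatic from the short exact sequence.} You claim that the short exact sequence $0\to\mathbf{I}^{j+1}(\mathcal{L})\to\mathbf{K}^{\MW}_j(\mathcal{L})\to\mathbf{K}^{M}_j\to 0$ together with Gersten resolutions ``directly produces the Cartesian square.'' This is not correct. That sequence gives you the long exact sequence and hence identifies the \emph{image} of $\CHW^j\to\CH^j$ with $\ker(\partial_{j,\mathcal{L}})$, which is only one leg of the square. To get the full fiber-product description $\CHW^j\cong H^j(\mathbf{I}^j(\mathcal{L}))\times_{\Ch^j}\ker(\partial_{j,\mathcal{L}})$ you need to invoke the Cartesian square of \emph{sheaves} relating $\mathbf{K}^{\MW}$, $\mathbf{K}^M$, $\mathbf{I}^\bullet$, $\mathbf{K}^M/2$, and even then the resulting map to the fiber product of cohomology groups is in general only \emph{surjective}; injectivity fails without further hypotheses. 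The paper appeals to \cite[Prop.~2.11]{HornbostelWendt}, where injectivity is guaranteed under the hypothesis that $\CH^\bullet(X)$ has no nontrivial $2$-torsion (or that $\eta$ acts injectively), and then verifies this by observing from \autoref{cor:chow-product} that $\CH^\bullet(\BSL_n^c)\cong\Z[\theta,c_2,\dots,c_n]$ is $2$-torsion-free. Your proposal never addresses injectivity at all, and this is where the actual content lies; ``a standard Totaro-style finite-dimensional approximation'' does not supply it.

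\textbf{Part (3): the $\mathbf{I}$-component of the Bockstein classes is not zero.} You write that the Bockstein classes $\widetilde{\beta}_{\mathcal{L}}(\overline{c}_J)$ ``lie in the $\ker(\partial_{j,\mathcal{L}})$ factor with zero $\mathbf{I}$-component by construction.'' That is wrong, and confuses them with the hyperbolic classes. In the fiber product picture the Bockstein classes are the pairs $\widetilde{\beta}_{\mathcal{L}}(\overline{c}_J)=\bigl(\beta_{\mathcal{L}}(\overline{c}_J),\,\widetilde{\Sq}^2_{\mathcal{L}}(\overline{c}_J)\bigr)$, whose $\mathbf{I}$-component is the genuine Bockstein class $\beta_{\mathcal{L}}(\overline{c}_J)\in H^{|J|+1}(\BSL_n^c,\mathbf{I}^{|J|+1}(\mathcal{L}))$; these are precisely the classes that generate the $\mathrm{W}(k)$-torsion in $\mathbf{I}$-cohomology, so they must be nonzero there. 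It is only the hyperbolic Chern classes $H_{\mathcal{L}}(x)=(0,2x)$ that have trivial $\mathbf{I}$-component, and this distinction is exactly what makes both families of generators necessary. A smaller issue: your proposed ``Gysin sequence for $\Gm\to\BSL_n^c\to\BGL_n$'' does not exist (the kernel of $\SL_n^c\to\GL_n$ is $\mu_2$, not $\Gm$); the paper instead uses the localization sequence coming from the description of $\BSL_n^c$ as the complement of the zero section of $\det\boxtimes\mathcal{O}(2)$ over $\BGL_n\times\mathbb{P}^\infty$.
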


The above theorem provides all the information to do computations in Chow--Witt rings of $\BSL_n^c$. We don't give a full list of relations, because in particular the hyperbolic Chern classes make this a painful task while not producing many additional insights.
To prove the result, we first provide in 
\autoref{sec:preliminaries} an introduction to many of our key players, including Chow--Witt groups, Bockstein homomorphisms and motivic Steenrod squares, as well as K\"{u}nneth formulas for Chow and Witt-sheaf cohomology.
In \autoref{sec:BSLnc} we construct and discuss the classifying space $\BSL_n^c$ of metalinear vector bundles.
In \autoref{sec:oriented-theories} we compute various oriented cohomologies of $\BSL_n^c$, namely its Chow groups, Witt sheaf cohomology, and motivic cohomology, and discuss the action of Steenrod squares.
In \autoref{sec:chow-witt} we compute the $\mathbf{I}^j$-cohomology of $\BSL_n^c$, which allows us to prove our main theorem on the Chow--Witt groups of $\BSL_n^c$.

In \autoref{sec:real-realizations} we explore the real Betti realizations of $\BSL_n^c$ and the associated Thom spectrum $\MSL^c$, from which we can show the following result (cf.~\autoref{cor:real-realization-bslnc} and \autoref{prop:realization-mslnc}).

\begin{proposition}
  The natural morphism ${\rm SL}_n\to{\rm SL}_n^c$ induces equivalences
  \begin{align*}
    {\rm Re}_{\mathbb{R}}\left(\BSL_n\right)[1/2]&\simeq{\rm Re}_{\mathbb{R}}\left(\BSL_n^c\right)[1/2]\\
    {\rm Re}_{\mathbb{R}}\left({\rm MSL}\right)[1/2]&\simeq{\rm Re}_{\mathbb{R}}\left({\rm MSL}^c\right)[1/2].
  \end{align*}
\end{proposition}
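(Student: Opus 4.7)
The plan is to exploit the short exact sequence of algebraic groups
\[
1 \to \SL_n \to \SL_n^c \to \Gm \to 1, \qquad (A,\lambda)\mapsto \lambda,
\]
where $\SL_n^c=\{(A,\lambda)\in\GL_n\times\Gm : \det A=\lambda^2\}$, and to show that after real realization the quotient $\Gm$ contributes only $2$-torsion, hence disappears after inverting~$2$.

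First I would pass to real points. The map $\SL_n^c(\R)\to\R^\times$ is surjective, since any $\lambda\in\R^\times$ lifts to some pair with $\det A=\lambda^2>0$, and its kernel is $\SL_n(\R)$. The formula $(A,\lambda)\mapsto(A(\lambda^2)^{-1/n},\lambda)$ is a group-theoretic splitting and yields an isomorphism of topological groups
\[
\SL_n^c(\R)\cong\SL_n(\R)\times\R^\times.
\]
Using that real realization of the bar construction for the special group $\SL_n^c$ computes the topological classifying space of $\SL_n^c(\R)$ (cf.~\autoref{prop:slnc-special}), this produces an equivalence
\[
\Re_\R(\BSL_n^c)\simeq\Re_\R(\BSL_n)\times \text{B}(\Z/2),
\]
under which the map induced by $\SL_n\hookrightarrow\SL_n^c$ is the inclusion of the first factor. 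Since $\text{B}(\Z/2)\simeq\RP^\infty$ has entirely $2$-torsion reduced homology, inverting~$2$ collapses the second factor and gives the first equivalence.

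For the Thom-spectrum statement, I would identify $\MSL^c$ as the colimit of Thom spectra of the universal rank-$n$ bundles on $\BSL_n^c$, each pulled back from $\BGL_n$ along the projection $(A,\lambda)\mapsto A$. On real points this projection factors (up to homotopy) through $\SL_n(\R)\times\R^\times \to \SL_n(\R)\times\R_{>0}\simeq\GL_n^+(\R)$, i.e.\ it is homotopically the projection away from the $\Z/2$-factor. Hence the realized universal bundle is the pullback of the universal bundle on $\Re_\R(\BSL_n)$ along $\Re_\R(\BSL_n)\times\text{B}(\Z/2)\to\Re_\R(\BSL_n)$, yielding
\[
\Re_\R(\MSL^c)\simeq\Re_\R(\MSL)\wedge\Sigma^\infty_+\text{B}(\Z/2).
\]
Inverting~$2$ collapses $\Sigma^\infty_+\text{B}(\Z/2)$ to the sphere spectrum and produces the second equivalence.

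The principal technical obstacle is verifying that real realization is compatible with the classifying-space and Thom-spectrum constructions for $\SL_n^c$; this rests on $\SL_n^c$ being Zariski-special, so that its motivic classifying space is modeled by a simplicial bar construction whose real realization coincides with the topological bar construction on $\SL_n^c(\R)$, after which the bundle-theoretic and homotopy-level manipulations above are routine.
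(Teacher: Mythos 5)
Your proof is correct and follows essentially the same approach as the paper: both identify $\Re_{\R}(\BSL_n^c)$ as a product of $\Re_{\R}(\BSL_n)\simeq \mathrm{BSO}(n)$ with $\RP^\infty$ (you via the explicit topological-group splitting $(A,\lambda)\mapsto(A(\lambda^2)^{-1/n},\lambda)$ of $\SL_n^c(\R)\cong\SL_n(\R)\times\R^\times$, the paper via the homotopy equivalence $\SL_n^c(\R)\hookrightarrow\GL_n(\R)^+\times\R^\times$ with contractible quotient), and both then observe that the universal bundle pulls back from the $\mathrm{BSO}(n)$ factor so that $\Re_{\R}(\MSL^c)\simeq\Re_{\R}(\MSL)\wedge\Sigma^\infty_+\RP^\infty$, which collapses upon inverting $2$. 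The explicit group-theoretic splitting you give is a mild simplification but the structure of the argument is identical, including the crucial point that specialness of $\SL_n^c$ lets one compute $\Re_{\R}(\mathrm{B}_\et\SL_n^c)$ via the bar construction on $\SL_n^c(\R)$.
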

Note that ${\rm Re}_{\mathbb{R}}\left(\BSL_n\right)\simeq\BSO_n$ and ${\rm Re}_{\mathbb{R}}\left(\MSL\right)\simeq{\rm MSO}$.

Finally, as a consequence of this and the cohomology computations in this paper, we show in \autoref{sec:MSL-MSLc} that the Thom spectra $\MSL$ and $\MSL^c$ become equivalent after inverting $\eta$. This fact is implicit in much of the literature comparing $\SL$ and ${\rm SL}^c$-orientations, but to the best of our knowledge hasn't been noted explicitly. The statement is proved as \autoref{cor:comparison-mslc}, and the proof techniques owe much to the work of Bachmann and Hopkins \cite{bachmann:hopkins}.

\begin{corollary}
  Let $k$ be a field of characteristic $\ne 2$. The natural morphism ${\rm MSL}\to {\rm MSL}^c$ induces an equivalence in the $\eta$-inverted motivic stable homotopy category $\mathcal{SH}(k)[\eta^{-1}]$:
  \[
    {\rm MSL}[\eta^{-1}]\xrightarrow{\simeq}{\rm MSL}^c[\eta^{-1}]
  \]
\end{corollary}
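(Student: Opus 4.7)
The plan is to apply a Bachmann-Hopkins style detection criterion for equivalences in $\mathcal{SH}(k)[\eta^{-1}]$: a map becomes an $\eta$-inverted equivalence once one checks that it induces equivalences on both real étale realization (equivalently, on real Betti realization at each real place, after inverting $2$) and on $\mathbf{W}$-sheaf cohomology. This is the content of the real étale/Witt-theoretic picture of $\mathcal{SH}(k)[\eta^{-1}]$ extracted in \cite{bachmann:hopkins}, and both invariants are already controlled by earlier material in the paper.

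The real realization half is immediate from \autoref{prop:realization-mslnc}, which gives ${\rm Re}_{\mathbb{R}}({\rm MSL})[1/2] \simeq {\rm Re}_{\mathbb{R}}({\rm MSL}^c)[1/2]$. For the Witt-cohomology half, I would use the $\mathbf{I}^j$-cohomology Thom isomorphisms, which hold for both ${\rm MSL}$ and ${\rm MSL}^c$ (with appropriate twist by the square-root line bundle $\Theta$ in the $\mathrm{SL}^c$ case), to reduce the question to a comparison on $\mathbf{I}^j$-cohomology of classifying spaces. The $\mathbf{I}^j$-description supplied by \autoref{thm:I-cohom-bslnc}, together with the Hornbostel-Wendt computation for $\BSL_n$, then identifies both $\mathbf{I}^j$-rings (in all relevant twists) as polynomial rings on Pontryagin classes (plus an Euler class in even rank), and shows that pullback along $\SL_n\hookrightarrow\SL_n^c$ is an isomorphism on these generators. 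Combining with the Thom isomorphism and taking the colimit over the rank filtration yields the desired $\mathbf{W}$-cohomology equivalence ${\rm MSL}\to{\rm MSL}^c$.

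The principal obstacle is carefully matching generators under pullback. The $\mathbf{I}^j$-cohomology of $\BSL_n^c$ in fact carries additional twisted Bockstein classes absent from $\mathbf{I}^j(\BSL_n)$; for the comparison to work one must verify that these classes die after inverting $\eta$, which is plausible since they originate from $2$-torsion/Bockstein phenomena tied to the mod-$2$ reduction $\mathbf{I}^j\to\overline{\mathbf{K}}^M_j$ appearing in \autoref{thm:CHW-computation}. Tracking the $\Theta$-twist through the Thom isomorphism comparison and verifying this vanishing constitutes the bulk of the technical work. A secondary concern is ensuring that the Bachmann-Hopkins detection criterion applies to the infinite-rank Thom spectra built as filtered colimits over the rank filtration, which should follow since $\eta$-inversion commutes with filtered colimits. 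Once these points are handled, the detection criterion delivers the equivalence of Thom spectra in $\mathcal{SH}(k)[\eta^{-1}]$.
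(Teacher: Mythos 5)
Your proposed detection criterion --- that an equivalence in $\mathcal{SH}(k)[\eta^{-1}]$ can be checked on real realization and on $\mathbf{W}$-sheaf cohomology --- is the crux of the argument, but it is not a statement you can cite, and it does not match what \cite{bachmann:hopkins} actually provides. What Bachmann and Hopkins prove is a length-two resolution of the $2$-complete $\eta$-inverted sphere by the (connective, $2$-completed) Balmer--Witt K-theory spectrum $\mathrm{kw}$:
\[
  \mathbb{S}[\eta^{-1}]^{\wedge}_2 \to \mathrm{kw}^{\wedge}_2 \xrightarrow{\varphi} \Sigma^4\mathrm{kw}^{\wedge}_2.
\]
The paper therefore fractures into the $2$-inverted and $2$-complete pieces, disposes of the first via real realization (exactly as you do), and then handles the second by showing the map is an isomorphism on $\mathrm{kw}$-homology \emph{compatibly with the map $\varphi$}. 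That last compatibility --- tracing the polynomial generators in $\mathrm{kw}_*({\rm MSL})$ and $\mathrm{kw}_*({\rm MSL}^c)$ back to the cells of $\mathbb{HP}^\infty$ --- is a genuine step, and it does not appear anywhere in your outline. $\mathbf{W}$-sheaf cohomology, by contrast, is only $\pi_0$-level information; it is strictly weaker than $\mathrm{kw}$-homology and does not by itself give you access to the resolution. You would need to independently prove a Hurewicz/Whitehead statement asserting that $\underline{\mathbf{W}}$-homology detects equivalences of (suitably connective, cellular) $\eta$-inverted spectra, and nothing in the paper or the cited sources supplies this.

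A secondary issue: the detour through $\mathbf{I}^j$-cohomology and the ``twisted Bockstein classes'' is both unnecessary and misleading. After inverting $\eta$, the B\"ar sequence identifies $\mathbf{I}^j$-cohomology with $\mathbf{W}$-cohomology, so the Bockstein/torsion classes you propose to track are already dead before the comparison starts; there is nothing to verify there. The correct inputs at this stage are \autoref{thm:eta-inverted-bslnc} and \autoref{cor:a-hlgy-mslc}, which give the cohomology and homology comparison for a general $\eta$-inverted $\mathrm{SL}^c$-orientable theory (in particular $\mathrm{kw}$), not the $\mathbf{I}^j$-computations of \autoref{thm:I-cohom-bslnc}. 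In short: your first half (real realization after inverting $2$) matches the paper, but the second half needs to be rebuilt around the $\mathrm{kw}$-resolution and the compatibility with $\varphi$, rather than around $\mathbf{W}$-sheaf cohomology and $\mathbf{I}^j$-computations.
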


\subsection{Acknowledgements}

The first named author is supported by an NSF Postdoctoral Fellowship (DMS-2303242).

\section{Preliminaries}\label{sec:preliminaries}

We provide a rough introduction to Chow--Witt groups, the cohomology of $\A^1$-invariant sheaves, and various long exact sequences and computational techniques.

\begin{convention} All cohomology groups $H^n(X,\mathscr{F})$ are Nisnevich cohomology of a strictly $\A^1$-invariant sheaf $\mathscr{F}$ of abelian groups, unless otherwise specified. We will use $\bullet$ to denote $\Z$-indexed cohomology theories. Finally we will denote by $\mathscr{F}(\mathcal{L})$ the twist of a sheaf by a line bundle.
\end{convention}

\subsection{Chow--Witt groups}

By Morel \cite{Morel}, there is a well-known cartesian square of strictly $\A^1$-invariant Nisnevich sheaves
\begin{equation}\label{eqn:pullback-sheaves}
\begin{aligned}
    \begin{tikzcd}[ampersand replacement=\&]
    \mathbf{K}_n^\MW\rar\dar\pb \& \mathbf{K}_n^{\rm M}\dar\\
    \mathbf{I}^n\rar \& \mathbf{K}^{\rm M}_n/2.
\end{tikzcd}
\end{aligned}
\end{equation}
The fibers of the horizontal arrows agree by virtue of this square being cartesian, and the same is true for the vertical arrows, inducing four distinct short exact sequences of sheaves
\begin{equation}\label{eqn:ses-of-sheaves}
\begin{aligned}
    \mathbf{I}^{n+1} \to &\mathbf{K}_n^\MW \to \mathbf{K}_n^{\rm M}, \\
    \mathbf{I}^{n+1} \to &\mathbf{I}^n \to \mathbf{K}_n^{\rm M}/2, \\
    2\mathbf{K}_n^{\rm M} \to &\mathbf{K}_n^\MW \to \mathbf{I}^n, \\
    2\mathbf{K}_n^{\rm M} \to &\mathbf{K}_n^{\rm M} \to \mathbf{K}_n^{\rm M}/2.
\end{aligned}
\end{equation}
These provide long exact sequences on Nisnevich cohomology.

\begin{example}\label{ex:chw} The \textit{Chow--Witt groups} $\CHW^n(X)$ are defined as $H^n(X, \mathbf{K}_n^\MW)$. Since the Chow groups are similarly computable as the cohomology of Milnor $K$-theory (Bloch's formula), we obtain long exact sequences of the form
\begin{align*}
    \cdots\to H^n(X,\mathbf{I}^{n+1}) \to \CHW^n(X) \to \CH^n(X) \xto{\del} H^{n+1}(X, \mathbf{I}^{n+1})\to \cdots
\end{align*}
\end{example}

\begin{remark} The oriented intersection product
\begin{align*}
    \mathbf{I}^a \times \mathbf{I}^b \to \mathbf{I}^{a+b}
\end{align*}
turns $H^\bullet(X, \mathbf{I}^\bullet)$ into a graded ring, and $\del$ can be thought of as a graded abelian group homomorphism
\begin{align*}
    \del \colon \CH^\bullet(X) \to H^{\bullet+1}(X, \mathbf{I}^{\bullet+1}).
\end{align*}
Moreover, $\del$ is a derivation, hence $\ker(\del)$ is a subring of $\CH^\bullet(X)$.
\end{remark}

\begin{notation} We denote the Chow groups modulo two by $\Ch^n(X)$.
\end{notation}

The pullback square \autoref{eqn:pullback-sheaves} induces a map from Chow--Witt theory to the pullback of ${\bf I}^j$-cohomology with the kernel of $\del$ (a subgroup of the Chow groups), and we may ask how far it is from being an isomorphism. The following result gives us some settings where this is indeed an isomorphism.

\begin{proposition} \cite[Prop.~2.11]{HornbostelWendt} For $X$ a smooth $F$-scheme, the canonical ring hom
\begin{align*}
    \CHW^\bullet(X) \to H^\bullet(X, \mathbf{I}^\bullet) \times_{\Ch^\bullet(X)} \ker(\del)
\end{align*}
is a surjective ring homomorphism, which is injective if either
\begin{enumerate}
    \item $\CH^\bullet(X)$ has no non-trivial 2-torsion
    \item the map $\eta\colon H^n(X, \mathbf{I}^{n+1}) \to H^n(X,\mathbf{I}^n)$ is injective.
\end{enumerate}
\end{proposition}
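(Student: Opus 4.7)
The plan is to exploit the cartesian square \autoref{eqn:pullback-sheaves} to produce a Mayer--Vietoris short exact sequence of Nisnevich sheaves
\[
0 \to \mathbf{K}_n^\MW \to \mathbf{I}^n \oplus \mathbf{K}_n^M \to \mathbf{K}_n^M/2 \to 0,
\]
in which the right-hand map is $(\alpha, x) \mapsto \rho(\alpha) - \bar x$; this is a short exact sequence because both arrows into $\mathbf{K}_n^M/2$ are surjective, so the cartesian square is simultaneously coCartesian. Passing to the long exact Nisnevich cohomology sequence reduces the whole statement to a diagram chase. The ring-homomorphism assertion comes for free: the cartesian square is one of sheaves of graded rings, the induced maps on cohomology are multiplicative, and the target fibre product inherits a componentwise graded ring structure.

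For surjectivity, the Mayer--Vietoris sequence identifies the image of $\CHW^\bullet(X) \to H^\bullet(X, \mathbf{I}^\bullet) \oplus \CH^\bullet(X)$ with the naive fibre product $H^\bullet(X, \mathbf{I}^\bullet) \times_{\Ch^\bullet(X)} \CH^\bullet(X)$, i.e., with the pairs $(\alpha, x)$ satisfying $\rho(\alpha) = \bar x$. To upgrade this to a fibre product over $\ker(\del)$, I would invoke the factorisation $\del = \beta \circ (\bmod 2)$, where $\beta\colon \Ch^\bullet(X) \to H^{\bullet+1}(X, \mathbf{I}^{\bullet+1})$ is the Bockstein of the sequence $\mathbf{I}^{n+1} \to \mathbf{I}^n \to \mathbf{K}_n^M/2$ of \autoref{eqn:ses-of-sheaves}. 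This factorisation follows from the naturality of connecting homomorphisms applied to the map of short exact sequences induced by the cartesian square. Since $\beta \circ \rho = 0$ by exactness, every pair $(\alpha, x)$ in the naive fibre product automatically satisfies $\del(x) = 0$, so the two fibre products coincide and surjectivity is immediate.

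For injectivity, the Mayer--Vietoris sequence identifies the kernel of $\CHW^n(X) \to H^n(X, \mathbf{I}^n) \oplus \CH^n(X)$ with the image of the connecting map $H^{n-1}(X, \mathbf{K}_n^M/2) \to \CHW^n(X)$; this image vanishes as soon as $H^{n-1}(X, \mathbf{I}^n) \oplus H^{n-1}(X, \mathbf{K}_n^M) \to H^{n-1}(X, \mathbf{K}_n^M/2)$ is surjective. Under hypothesis (2), the long exact sequence of $\mathbf{I}^{n+1} \to \mathbf{I}^n \to \mathbf{K}_n^M/2$ combined with injectivity of $\eta\colon H^n(X, \mathbf{I}^{n+1}) \to H^n(X, \mathbf{I}^n)$ forces the connecting map from $H^{n-1}(X, \mathbf{K}_n^M/2)$ to vanish, giving surjectivity of the $\mathbf{I}^n$-summand. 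Under hypothesis (1), I would chase through the two short exact sequences $\mathbf{K}_n^M[2] \to \mathbf{K}_n^M \to 2\mathbf{K}_n^M$ and $2\mathbf{K}_n^M \to \mathbf{K}_n^M \to \mathbf{K}_n^M/2$ to reduce surjectivity of the $\mathbf{K}_n^M$-summand to the absence of $2$-torsion in $\CH^n(X)$.

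The main obstacle I anticipate is the bookkeeping under hypothesis (1): several short exact sequences around multiplication by two need to be juggled simultaneously, and the resulting connecting homomorphisms have to be carefully matched with one another before the Chow-group torsion hypothesis translates into vanishing of the Mayer--Vietoris connecting map. Everything else is a straightforward diagram chase inside the Mayer--Vietoris sequence, together with the identification of the canonical boundary $\del$ via the Bockstein.
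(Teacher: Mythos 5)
The paper does not prove this proposition; it is imported verbatim from \cite[Prop.~2.11]{HornbostelWendt}, so there is no in-paper proof to compare against. I will therefore assess the proposal on its own terms.

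Your Mayer--Vietoris decomposition is a sound way to organise the argument. The cartesian square of sheaves in \autoref{eqn:pullback-sheaves} is indeed also cocartesian because both $\mathbf{I}^n \to \mathbf{K}_n^{\rm M}/2$ and $\mathbf{K}_n^{\rm M} \to \mathbf{K}_n^{\rm M}/2$ are epimorphisms, so the short exact sequence
\[
0 \to \mathbf{K}_n^{\rm MW} \to \mathbf{I}^n \oplus \mathbf{K}_n^{\rm M} \to \mathbf{K}_n^{\rm M}/2 \to 0
\]
is correct. Your argument that the naive fibre product $H^\bullet(X,\mathbf{I}^\bullet) \times_{\Ch^\bullet(X)} \CH^\bullet(X)$ coincides with the one over $\ker(\del)$, using the factorisation $\del = \beta\circ(\bmod 2)$ and $\beta\circ\rho = 0$, is exactly right, and surjectivity then falls out of the Mayer--Vietoris long exact sequence. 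Injectivity under hypothesis (2) via the B\"ar sequence is also correct: injectivity of $\eta$ forces the B\"ar connecting map out of $H^{n-1}(X,\mathbf{K}_n^{\rm M}/2)$ to vanish, which makes the $\mathbf{I}^n$-summand surject onto $H^{n-1}(X,\mathbf{K}_n^{\rm M}/2)$ and hence kills the Mayer--Vietoris connecting map. The ring-homomorphism claim is fine, provided one also records that $\ker(\del)$ is a subring of $\CH^\bullet(X)$ because $\del$ is a derivation (the paper notes this in Section~2.1).

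The part that you describe as \emph{bookkeeping} under hypothesis (1) is actually where a genuine idea is needed, and without it the chase through $\mathbf{K}_n^{\rm M}[2] \to \mathbf{K}_n^{\rm M} \to 2\mathbf{K}_n^{\rm M}$ and $2\mathbf{K}_n^{\rm M} \to \mathbf{K}_n^{\rm M} \to \mathbf{K}_n^{\rm M}/2$ does not obviously terminate. The sheaf $\mathbf{K}_n^{\rm M}[2]$ is genuinely nonzero (for instance $\{-1\}$ is $2$-torsion in $\mathbf{K}_1^{\rm M}$), so you cannot simply ignore it. The missing ingredient is the vanishing $H^m(X,\mathbf{K}_n^{\rm M}[2]) = 0$ for $m \ge n$. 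This follows from exactness of contraction on strictly $\A^1$-invariant sheaves and its compatibility with multiplication by $2$: one computes $(\mathbf{K}_n^{\rm M}[2])_{-j} = \mathbf{K}_{n-j}^{\rm M}[2]$, and since $\mathbf{K}_0^{\rm M}[2] = \Z[2] = 0$, the Gersten resolution of $\mathbf{K}_n^{\rm M}[2]$ has length at most $n-1$. With this in hand, the first short exact sequence identifies $H^n(X, 2\mathbf{K}_n^{\rm M}) \cong H^n(X,\mathbf{K}_n^{\rm M}) = \CH^n(X)$, and under that identification the forgetful map $H^n(X,2\mathbf{K}_n^{\rm M}) \to H^n(X,\mathbf{K}_n^{\rm M})$ becomes multiplication by $2$ on $\CH^n(X)$. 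Its injectivity under hypothesis (1) then forces the connecting map $\delta_3\colon H^{n-1}(X,\mathbf{K}_n^{\rm M}/2) \to H^n(X,2\mathbf{K}_n^{\rm M})$ to vanish; since the Mayer--Vietoris connecting map factors through $\delta_3$, you get injectivity. So your route does work, but the Gersten-length computation for $\mathbf{K}_n^{\rm M}[2]$ (equivalently, the identification $H^n(X,2\mathbf{K}_n^{\rm M})\cong\CH^n(X)$) is the crucial lemma and should be stated explicitly rather than subsumed under bookkeeping.
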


\subsection{Bockstein homomorphisms} Recall that the short exact sequence of abelian groups
\begin{equation}\label{eqn:mult-2-SES}
\begin{aligned}
   0 \to \Z \xto{2} \Z \to \Z/2\Z \to 0 
\end{aligned}
\end{equation}
induces a long exact sequence on singular cohomology
\begin{align*}
    \cdots \to H^q(X;\Z) \xto{\rho} H^q(X;\Z/2\Z) \xto{\beta} H^{q+1}(X;\Z) \xto{2} H^{q+1}(X;\Z) \to \cdots
\end{align*}
whose boundary map is the \emph{integral Bockstein} morphism $\beta$. There is a similar boundary map in the long exact sequence associated to the short exact sequence
\begin{align*}
  0\to\Z/2\Z\to\Z/4\Z\to\Z/2\Z\to 0;
\end{align*}
this boundary map is the first Steenrod square $\Sq^1\colon H^q(X;\Z/2\Z)\to H^{q+1}(X;\Z/2\Z)$.  By definition, the integral Bockstein map and the first Steenrod square $\Sq^1$ are related by the commutative diagram
\[ \begin{tikzcd}
    H^q(X;\Z/2\Z)\rar["\beta"]\ar[dr,"\Sq^1" below left] & H^{q+1}(X;\Z)\dar["\rho" right]\\
     & H^{q+1}(X;\Z/2\Z).
\end{tikzcd} \]
Since $\Sq^1$ isn't a ring homomorphism, but only a derivation, it's hard to describe its image and kernel directly from its definition. The following result, initially due to Brown, allows us to circumvent this difficulty under some assumptions about the torsion. This works for any prime, but we state it for the prime $2$ here.

\begin{lemma}\label{lem:2-torsion-lemma} \cite[Lemma~2.2]{Brown}, \cite[Corollary~3E.4]{Hatcher} Suppose $H^\bullet(X;\Z)$ has no 4-torsion. Then $\rho\colon H^\bullet(X;\Z) \to H^\bullet(X;\Z/2\Z)$ is injective on the $2$-torsion, the image of the $2$-torsion under $\rho$ is $\im(\Sq^1)$, and we have that $\ker(\beta) = \ker(\Sq^1)$.
\end{lemma}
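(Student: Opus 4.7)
The plan is to exploit the long exact sequence coming from \eqref{eqn:mult-2-SES} together with the factorization $\Sq^1=\rho\circ\beta$ supplied by the commutative triangle displayed just above the lemma. The three claims will fall out in sequence, each feeding into the next.

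First I would prove injectivity of $\rho$ on the $2$-torsion of $H^\bullet(X;\Z)$. Suppose $x\in H^q(X;\Z)$ satisfies $2x=0$ and $\rho(x)=0$. Exactness of the long exact sequence associated to \eqref{eqn:mult-2-SES} at the middle term says $\ker(\rho)=\im(\cdot 2)$, so we may write $x=2y$ for some $y\in H^q(X;\Z)$. Then $4y=2x=0$, and the no-$4$-torsion hypothesis (taken to mean $\{y:4y=0\}=\{y:2y=0\}$) forces $2y=0$, hence $x=2y=0$. This is the one spot where the hypothesis on $4$-torsion is used.

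Second, I would identify $\im(\Sq^1)$ with the image of the $2$-torsion under $\rho$. Exactness of the same long exact sequence at the $\Z$-coefficient term gives $\im(\beta)=\ker(\cdot 2)=\{z\in H^{q+1}(X;\Z):2z=0\}$, i.e., exactly the $2$-torsion. Using $\Sq^1=\rho\circ\beta$, one gets
\[
\im(\Sq^1)=\rho(\im(\beta))=\rho\bigl(\text{2-torsion of }H^{q+1}(X;\Z)\bigr).
\]

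Third, for $\ker(\beta)=\ker(\Sq^1)$, the inclusion $\ker(\beta)\subseteq\ker(\Sq^1)$ is immediate from $\Sq^1=\rho\circ\beta$. Conversely, if $\Sq^1(x)=0$ then $\rho(\beta(x))=0$; but $\beta(x)$ is automatically $2$-torsion by the exactness argument in the previous step, and the injectivity of $\rho$ on the $2$-torsion (from the first step) forces $\beta(x)=0$.

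The whole argument is essentially a diagram chase in the Bockstein long exact sequence, and the only nontrivial ingredient is the $4$-torsion hypothesis, which is needed solely to upgrade the statement $4y=0$ to $2y=0$ in the first step. No obstacles of substance should arise; the proof is a purely formal manipulation valid for any cohomology theory equipped with a short exact sequence as in \eqref{eqn:mult-2-SES} and the analogous mod-$4$ sequence, so no features of singular cohomology beyond functoriality and the long exact sequence are used.
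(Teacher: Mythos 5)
The paper cites this lemma to Brown and Hatcher without reproducing a proof, so there is no "paper's own proof" to compare against, but your argument is correct and is exactly the standard diagram chase in the Bockstein long exact sequence that underlies both references. All three claims are handled cleanly: the no-$4$-torsion hypothesis is used precisely once (to upgrade $4y=0$ to $2y=0$ in the injectivity step), the identification $\im(\beta)=\ker(\cdot 2)$ gives the second claim via $\Sq^1=\rho\circ\beta$, and the third claim follows from the first two.
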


This is a key result in the inductive computation of the integral cohomology of $\BO_n$ and $\BSO_n$ in \cite{Brown}. We obtain an analogous story in the motivic context. The following short exact sequence of sheaves is intended to be reminiscent of \autoref{eqn:mult-2-SES}:
\begin{align*}
    0 \to \mathbf{I}^{n+1} \to \mathbf{I}^n \to \mathbf{K}_n^{\rm M}/2\to 0,
\end{align*}
and it admits a twisted generalization for any line bundle $\mathcal{L}$ over a base space $X$:
\begin{align*}
    0 \to \mathbf{I}^{n+1}(\mathcal{L}) \to \mathbf{I}^{n}(\mathcal{L}) \to \mathbf{K}_n^{\rm M}/2\to 0.
\end{align*}
This induces a long exact sequence of cohomology groups, often called the \textit{B\"ar sequence}:
\begin{align*}
    \cdots \to H^q(X,\mathbf{I}^{n+1}(\mathcal{L})) \xto{\eta} H^q(X,\mathbf{I}^{n}(\mathcal{L})) \xto{\rho_\mathcal{L}} H^{q}(X, \mathbf{K}_n^{\rm M}/2)\xto{\beta_\mathcal{L}} H^{q+1}(X,\mathbf{I}^{n+1}(\mathcal{L}))\to \cdots 
\end{align*}
Here the map $\eta$ is multiplication by the Hopf element $\eta\in \mathbf{K}_{-1}^\MW$, corresponding to multiplication by 2 in the topological setting above. When $q=n$, we get that $H^n(X,\mathbf{K}_n^{\rm M}/2) \cong \Ch^n(X)$ is the mod 2 Chow group of $X$.

\begin{remark}\label{rmk:labelname} When the cohomological index on the B\"ar sequence starts to reach the exponent on the fundamental ideal, the nature of the sequence changes. In particular, since negative powers of the fundamental ideal are by convention the Witt sheaf, the Gersten resolution for $\mathbf{I}^j$-cohomology identifies the cohomology with Witt-sheaf cohomology in the following sense:
\begin{align*}
    H^n (X, \mathbf{I}^j(\mathcal{L})) \xto{\sim} H^n(X, \mathbf{W}(\mathcal{L})) \text{ for } n>j.
\end{align*}
Moreover, negative  mod 2 Milnor $K$-theory vanishes,
so again a Gersten resolution argument easily shows that
\begin{align*}
    H^n(X, \mathbf{K}_j^{\rm M}/2) \cong 0 \text{ for } n>j.
\end{align*}
These observations together give an exact sequence exhibiting Witt-sheaf cohomology as a quotient of ${\bf I}$-cohomology:
\begin{equation}\label{eqn:four-term-Bar}
\begin{aligned}
    \Ch^{j}(X) \xto{\beta_\mathcal{L}} H^{j+1}(X, \mathbf{I}^{j+1}(\mathcal{L})) \xto{\eta} H^{j+1}(X, \mathbf{W}(\mathcal{L})) \to 0.
\end{aligned}
\end{equation}
\end{remark}

\begin{notation}\label{nota:beta-subscript} We remark a few notational conventions.
\begin{enumerate}
    \item When the line bundle $\mathcal{L}$ is trivial, we drop the subscript and simply write $\beta$ and $\rho$ instead of $\beta_\mathcal{L}$ and $\rho_\mathcal{L}$.
    \item When the cohomological degree matches the grading on the ${\bf I}^j$ and Milnor $K$-theory sheaves, we will often add a subscript on the Bockstein and reduction homomorphisms:
\begin{align*}
    \beta_j \colon  H^j(X,\mathbf{K}_j^{\rm M}/2) &\to H^{j+1}(X; \mathbf{I}^{j+1}) \\
    \rho_j \colon  H^j(X,\mathbf{I}^j) &\to H^j(X,\mathbf{K}_j^{\rm M}/2).
\end{align*}
This will be important later, as we will need to keep track of two distinct Bocksteins in different degrees for a computation. As the subscript on $\beta$ is often reserved for twists of line bundles, when we are both twisting and being cautious about degrees we will unfortunately need to denote this by $\beta_{\mathcal{L},j}$.
\end{enumerate}
\end{notation}

Just as in the classical context, we can piece together the maps in the B\"ar sequence to obtain a motivic Steenrod square. This identification is due to Totaro \cite[Theorem~1.1]{Totaro-Witt}, and the twisted version is due to Asok and Fasel \cite[Theorem~3.4.1]{asok:fasel:secondary}. 

\begin{proposition}\label{prop:totaro-sq2} The composite
\begin{align*}
    \Ch^n(X) \xto{\beta_{\mathcal{L},n}} H^{n+1}(X,\mathbf{I}^{n+1}(\mathcal{L})) \xto{\rho_{\mathcal{L},n+1}} \Ch^{n+1}(X)
\end{align*}
is the motivic Steenrod square $\Sq^2_{\mathcal{L}}$ (note that we may also denote this $\Sq_{\mathcal{L},n}^2$ to be consistent with \autoref{nota:beta-subscript}).
\end{proposition}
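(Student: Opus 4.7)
The plan is to show that the composite $\rho_{\mathcal{L},n+1} \circ \beta_{\mathcal{L},n}$ is a bistable cohomology operation on mod $2$ motivic cohomology of the correct bidegree, and then to pin it down as $\Sq^2_\mathcal{L}$ by testing on a universal class. Throughout, the key bookkeeping input is the Beilinson--Lichtenbaum identification (a consequence of Voevodsky's proof of Bloch--Kato), which realises $\Ch^n(X) = H^n(X,\mathbf{K}_n^{\rm M}/2)$ as motivic cohomology $H^{2n,n}(X;\Z/2)$. Under this identification, the composite has bidegree $(2,1)$, matching that of $\Sq^2$.

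First I would verify that $\rho_{\mathcal{L},n+1}\circ\beta_{\mathcal{L},n}$ is a natural, bistable operation on $H^{2\bullet,\bullet}(-;\Z/2)$. Naturality in $X$ is immediate from the naturality of the B\"ar long exact sequence; bistability under $S^1$- and $\Gm$-suspension follows because the B\"ar sequence arises from a short exact sequence of strictly $\A^1$-invariant Nisnevich sheaves, and the connecting maps commute with suspension.

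By Voevodsky's description of the mod $2$ motivic Steenrod algebra, the space of bistable operations of bidegree $(2,1)$ on mod $2$ motivic cohomology is one-dimensional over $\mathbb{F}_2$ and spanned by $\Sq^2$. So the composite equals either $0$ or $\Sq^2$, and to distinguish these it suffices to evaluate on a universal class where $\Sq^2$ is nontrivial. Taking the test space $X=\P^\infty\simeq\BGm$ with $c=c_1(\O(1))\in\Ch^1(\P^\infty)$, the Wu formula gives $\Sq^2(c^2)=c^3\neq 0$. The $\mathbf{I}$-cohomology of $\P^\infty$ and the associated B\"ar sequence are explicitly computable, and one checks directly that $\rho\circ\beta$ also sends $c^2$ to $c^3$, which settles the untwisted statement; this is essentially the content of Totaro's argument.

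For the twisted case, the plan is to reduce to the untwisted one via the Thom isomorphism for $\mathbf{I}$-cohomology, transferring the Bockstein along the pullback-then-push between $X$ and the Thom space $\Th(\mathcal{L})$, and using naturality of the B\"ar sequence. The main obstacle is verifying that this Thom isomorphism is compatible with both the Bockstein $\beta_{\mathcal{L}}$ and with mod $2$ reduction $\rho_{\mathcal{L}}$, so that the untwisted identification with $\Sq^2$ transports across. This compatibility is precisely what Asok--Fasel establish in their treatment of secondary operations, and it is what upgrades the untwisted identification to the identification with $\Sq^2_\mathcal{L}$ on $\mathcal{L}$-twisted cohomology.
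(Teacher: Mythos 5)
The paper does not actually prove this proposition --- it cites Totaro for the untwisted case and Asok--Fasel for the twisted one --- so your proposal supplies a proof where the paper has none, and must be judged on its own merits. Your strategy (exhibit a bistable operation, classify, then test) is reasonable in outline, but there are genuine gaps.

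The most glaring issue is the test calculation. On $\P^\infty$ with $c\in\Ch^1(\P^\infty)=H^{2,1}(\P^\infty;\Z/2)$ you assert $\Sq^2(c^2)=c^3$; this is the $\mathbb{RP}^\infty$ answer, not the motivic $\P^\infty$ answer. Since $c$ has bidegree $(2,1)$, the motivic Cartan formula gives $\Sq^2(c^2)=2c\,\Sq^2(c)+\tau(\Sq^1 c)^2=0$ --- squares lie in $\ker(\Sq^2)$, as the paper itself observes in the proof of \autoref{lem:ker-bockstein-untwisted}. The correct test class is $c$ itself, where $\Sq^2(c)=c^2$, and you would then still have to compute $\rho\beta(c)$ via the B\"ar sequence on $\P^\infty$ (which requires knowing the $\mathbf{I}^\bullet$-cohomology of $\P^\infty$, which you do not quote). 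Beyond this, the classification step is both misstated and under-justified. Over a field $k$ with $k^\times/2\neq 0$ the bistable operations of bidegree $(2,1)$ on mod~$2$ motivic cohomology are $\Z/2\cdot\Sq^2\oplus(k^\times/2)\cdot\Sq^1$, not a one-dimensional space; this is fixable because $\Sq^1$ lands in $H^{2n+1,n}(X;\Z/2)$, which vanishes for smooth $X$ (Gersten), but you have to say so. More fundamentally, the composite $\rho\circ\beta$ is a map between Nisnevich cohomology groups $H^q(X,\mathbf{K}^{\rm M}_n/2)$, which agree with motivic cohomology only on the Chow diagonal $q=n$; the Eilenberg--MacLane spectrum of the homotopy module $\mathbf{K}^{\rm M}_\bullet/2$ is the heart truncation of $H\Z/2$, not $H\Z/2$ itself, so one cannot simply cite Voevodsky's classification of bistable operations on $H\Z/2$-cohomology. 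A correct route is to classify natural transformations $\Ch^n(-)\to\Ch^{n+1}(-)$ unstably, via $H^{2n+2,n+1}(K(\Z/2(n),2n);\Z/2)$, which Voevodsky also computes --- but that is a different and more delicate argument than what you wrote. Finally, the twisted case is only gestured at: the compatibility of the Thom isomorphism with $\beta_{\mathcal{L}}$ and $\rho_{\mathcal{L}}$ is precisely the nontrivial content of the Asok--Fasel reference, and would have to be established rather than assumed.
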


The Steenrod square satisfies $\Sq^2_\mathcal{L} \circ \Sq^2_\mathcal{L} = 0$ by the B\"ar sequence, and it is a derivation satisfying the Jacobi identity by e.g. \cite[Lemma~2.10]{HornbostelWendt}. If we'd like to characterize the image or kernel of $\Sq^2$ on the mod two Chow groups of a $k$-variety, we might want an analogue of \autoref{lem:2-torsion-lemma} in the motivic setting. Note that in Brown's original paper, he operated under the assumption that all torsion in the integral cohomology be 2-torsion. This condition can be weakened to just require that there is no 4-torsion. We can similarly strengthen the assumption that all torsion is $\eta$-torsion in $H^\bullet(X,\mathbf{I}^\ast)$ to the assumption that there is no $\eta^2$ torsion, obtaining a strengthening of \cite[Lemma~2.4]{Wendt-Gr}.

\begin{lemma}\label{lem:eta-squared-torsion} Let $X$ be a smooth scheme over a field of characteristic $\ne 2$
. If $H^n(X,\mathbf{I}^{n})$ has no $\eta^2$-torsion, then
\begin{align*}
    \rho_n \colon H^n(X, \mathbf{I}^n) \to \Ch^n(X)
\end{align*}
is injective on the image of $\beta_{n-1}$, and in particular $\ker(\Sq^2_{n-1}) = \ker(\beta_{n-1}) = \im(\rho_{n-1})$.
\end{lemma}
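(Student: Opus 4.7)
The plan is a short diagram chase through two instances of the B\"ar long exact sequence, combined with the factorization $\Sq^2_{n-1} = \rho_n \circ \beta_{n-1}$ recorded in \autoref{prop:totaro-sq2}.

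First I would write down the relevant pieces of both B\"ar sequences. From the short exact sequence $0 \to \mathbf{I}^n \to \mathbf{I}^{n-1} \to \mathbf{K}_{n-1}^{\rm M}/2 \to 0$, the B\"ar sequence in cohomological degree $n-1$ yields
\[
H^{n-1}(X,\mathbf{I}^{n-1}) \xto{\rho_{n-1}} \Ch^{n-1}(X) \xto{\beta_{n-1}} H^n(X,\mathbf{I}^n) \xto{\eta} H^n(X,\mathbf{I}^{n-1}),
\]
while from $0 \to \mathbf{I}^{n+1} \to \mathbf{I}^n \to \mathbf{K}_n^{\rm M}/2 \to 0$, the B\"ar sequence in cohomological degree $n$ yields
\[
H^n(X,\mathbf{I}^{n+1}) \xto{\eta} H^n(X,\mathbf{I}^n) \xto{\rho_n} \Ch^n(X).
\]

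For the injectivity of $\rho_n$ on $\im(\beta_{n-1})$, I would take $x = \beta_{n-1}(z)\in H^n(X,\mathbf{I}^n)$ with $\rho_n(x)=0$. Exactness of the second sequence produces $y\in H^n(X,\mathbf{I}^{n+1})$ with $x=\eta y$; exactness of the first sequence at $H^n(X,\mathbf{I}^n)$ gives $\eta x=\eta\beta_{n-1}(z)=0$, and hence $\eta^2 x = 0$ in $H^n(X,\mathbf{I}^{n-2})$. The $\eta^2$-torsion-freeness hypothesis on $H^n(X,\mathbf{I}^n)$ then forces $x=0$.

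The second assertion is essentially formal. The inclusion $\ker(\beta_{n-1})\subseteq \ker(\Sq^2_{n-1})$ is automatic from $\Sq^2_{n-1}=\rho_n\circ\beta_{n-1}$, and the reverse inclusion is exactly the content of the first part applied to $\beta_{n-1}(z)\in\im(\beta_{n-1})\cap\ker(\rho_n)$. The equality $\ker(\beta_{n-1})=\im(\rho_{n-1})$ is just exactness of the first B\"ar sequence at $\Ch^{n-1}(X)$. There is no real obstacle; the argument is a clean diagram chase, and one only needs to remember that the two Bocksteins here come from short exact sequences indexed by adjacent powers of the fundamental ideal. The twisted variant goes through with identical bookkeeping once $\mathcal{L}$ is carried along.
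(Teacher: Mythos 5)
The structure of your diagram chase is right, and you even produce the element $y$ that the argument hinges on — but then you never use it, and the place where you invoke the hypothesis is the wrong one.

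Here is the problem. You establish $\eta x = \eta\beta_{n-1}(z) = 0$ (this is just exactness of the first B\"ar sequence and is true for every $x$ in $\im(\beta_{n-1})$, independent of any hypothesis). You then say ``hence $\eta^2 x = 0$,'' but this is a tautology: $\eta^2 x = \eta(\eta x) = \eta\cdot 0 = 0$ is forced once $\eta x = 0$. Consequently, if the $\eta^2$-torsion-freeness hypothesis delivered $x = 0$ from the sole datum $\eta^2 x = 0$, it would deliver $x = 0$ for \emph{every} $x\in\im(\beta_{n-1})$, i.e.\ it would force $\im(\beta_{n-1}) = 0$ and the lemma would be vacuous. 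Put differently, for $x \in H^n(X,\mathbf{I}^n)$ the two kernels $\ker(\eta^2)$ and $\ker(\eta)$ automatically coincide (the second map $\eta\colon H^n(X,\mathbf{I}^{n-1})\to H^n(X,\mathbf{I}^{n-2})$ is an isomorphism, both groups being $H^n(X,\mathbf{W})$), so ``no $\eta^2$-torsion'' applied to $x$ says nothing beyond what you already knew.

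The hypothesis has bite when applied to your element $y\in H^n(X,\mathbf{I}^{n+1})$ with $x = \eta y$, which is exactly what the paper does. Combining $\eta x = 0$ with $x = \eta y$ gives $\eta^2 y = 0$, and \emph{a priori} you do not know that $\eta y$ is zero — that is the thing you are trying to prove. Applying $\eta^2$-torsion-freeness to $y$ (for the $\eta$-map $H^n(X,\mathbf{I}^{n+1})\to H^n(X,\mathbf{I}^n)$, which is not an isomorphism in general) then yields $\eta y = 0$, i.e.\ $x = \beta_{n-1}(z) = 0$. So replace the sentence ``and hence $\eta^2 x = 0 \dots$ forces $x=0$'' by ``hence $\eta^2 y = \eta x = 0$, and $\eta^2$-torsion-freeness forces $\eta y = 0$, i.e.\ $x = 0$.'' With that fix the rest of your proof (the identification of $\ker\Sq^2_{n-1} = \ker\beta_{n-1} = \im\rho_{n-1}$ via $\Sq^2_{n-1} = \rho_n\circ\beta_{n-1}$ and exactness) is correct and matches the paper.
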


\begin{proof}
  Suppose we have an $x\in \Ch^{n-1}(X)$ so that $\rho_n \left( \beta_{n-1}(x) \right) = 0$. We'd like to argue that $\beta_{n-1}(x) = 0$. Since $\rho_n \left( \beta_{n-1}(x) \right)=0$, this implies by the B\"ar sequence that $\beta_{n-1}(x) = \eta y$ for some $y\in H^n(X,\mathbf{I}^{n+1})$. Since $\eta y \in \im(\beta_{n-1})$, we have that $\eta y$ is $\eta$-torsion, implying $\eta^2 y = 0$ (in $H^n(X,{\bf I}^{n-1})\cong H^n(X,{\bf W})$). By assumption of $\eta^2$-torsion-freeness, we conclude that $\eta y$ and/or $y$ are actually zero, in either case this tells us $\beta_{n-1}(x) = 0$, and we have the first conclusion.

Since $\rho_n$ is injective on the image of $\beta_{n-1}$, the kernels of $\beta_{n-1}$ and $\Sq^2_{n-1} = \rho_n \circ \beta_{n-1}$ agree, and the B\"ar sequence identifies $\ker(\beta_{n-1}) = \im(\rho_{n-1})$.
\end{proof}

As a particular example, the assumption of $\eta^2$-torsionfreeness is satisfied if $H^\bullet(X, {\bf I}^{\bullet-1})\cong H^\bullet(X,{\bf W})$ is free as a ${\rm W}(k)$-module, since in this case also $H^n(X,{\bf I}^{n})$ has no $\eta$-power torsion, and \autoref{lem:eta-squared-torsion} applies.

\subsection{Twists and orientations}

Given a line bundle $\mathcal{L} \to X$, we can twist an $\A^1$-invariant sheaf of abelian groups by the bundle in order to obtain \textit{twisted cohomology}. The different twists that can appear are captured by the Picard group $\Pic(X)$ of the scheme, but this can change depending on the orientation data attached to the cohomology. Oriented sheaves, for example Milnor $K$-theory, do not see twists. Quadratically oriented theories, like Milnor--Witt $K$-theory, Witt theory, and ${\bf I}^j$-cohomology, are insensitive to twists by squares of line bundles, hence the possible twists are indexed by $\Pic(X)/2$.

\begin{example}\label{ex:twists-bgln} We have that $\Ch^1(\BGL_n) = \Z/2\Z$, so there are two twists given by the trivial bundle $\O_{\BGL_n}$ and the tautological bundle $\O_{\BGL_n}(-1)$. As a particular case when $n=1$, we have two twists over $\P^\infty$ as well. This compares well to the real realization where there are two isomorphism classes of rank 1 local systems over $\mathbb{RP}^\infty$, or more generally ${\rm BO}(n)$. 
\end{example}

\subsection{The localization sequence}

Suppose $Z \hookto X$ is a closed immersion of smooth $k$-schemes, and $U = X-Z$ is the open complement. Let $\mathscr{F}$ be a strongly $\A^1$-invariant sheaf of abelian groups on $X$, and let $\mathcal{L} \to X$ be a line bundle. Then we obtain a long exact sequence on cohomology with supports:
\begin{equation}\label{eqn:localization}
\begin{aligned}
    \cdots \to H^j_Z(X, \mathscr{F} (\mathcal{L})) \xto{i_\ast} H^{j}(X, \mathscr{F} (\mathcal{L})) \to H^{j}(U, \mathscr{F} ( \mathcal{L})) \xto{\del} \cdots
\end{aligned}
\end{equation}
In case the sheaf $\mathscr{F}$ is part of a homotopy module $M_\bullet$, then purity allows to identify the cohomology with supports as cohomology of $Z$:
\begin{align*}
  H^j_Z(X,M_q(\mathcal{L}))\cong H^{j-d}(Z,M_{q-d}(\mathcal{L}\otimes\det\nolimits^{-1}N_{Z/X}))
\end{align*}
Here $N_{Z/X}$ denotes the normal bundle of the immersion $Z\hookto X$, and $d$ denotes the codimension of the immersion. Cohomology with supports can be interpreted as the cohomology of the Thom space ${\rm Th}(N_{Z/X})$, and essentially the twists are a different way to talk about cohomology of Thom spaces of line bundles.

\begin{example}\label{ex:chow-localization} When $\mathscr{F} = \mathbf{K}_j^{\rm M}$, this specializes to the well-known localization sequence on Chow groups 
\begin{align*}
    \CH^{i-d}(Z) \to \CH^i(X) \to \CH^i(U) \to 0,
\end{align*}
where $d = \codim(Z,X)$. Note that there is no twist in the Chow group of $Z$ because Chow groups are part of a GL-orientable cohomology theory.
\end{example}

\begin{example}\label{ex:i-star-zero-section}
  Let $i\colon Z \to X$ be the inclusion of the zero locus of a vector bundle over $X$. Assume that $M_\bullet$ is an ${\rm SL}^c$-orientable homotopy module, so that $H^\bullet(-,M_\bullet)$ supports a theory of Euler classes. Then $i_\ast$ can be interpreted as multiplication with the Euler class of the bundle. This is a classical fact for Chow groups, e.g. \cite[Example~3.3.2]{Fulton}.
\end{example}

\subsection{K\"{u}nneth formulas}

Given a group scheme $G$, one wants to construct a motivic classifying space ${\rm B}G$. Analogous to classical topology, we'd like to define this to be the quotient ${\rm E}G/G$ of a contractible space with a free $G$-action. In the algebraic setting, there are different topologies (Zariski, Nisnevich and \'etale, to mention the most relevant ones for our setting), and consequently different possible such quotients (in Zariski, Nisnevich or \'etale sheaves).

While the Zariski and Nisnevich classifying spaces typically agree, the \'etale topology is typically different. For the Zariski and Nisnevich classifying spaces, one can work with the simplicial sheaf bar construction model, while the classifying spaces for the \'etale topology have Ind-scheme models and can be approximated by smooth schemes. In particular, Totaro \cite{Totaro} has used the latter approach in his definition and computations of Chow groups of classifying spaces. His work shows that in order to do cohomology computations for ${\rm B}_\et G$ (for cohomology theories in the heart of the homotopy $t$-structure), it suffices to work with a finite-dimensional model for ${\rm B}_\et G$, with the appropriate dimension depending on the cohomological degrees of interest.

Explicitly, let $V$ be a $G$-representation, and suppose that the locus $Z \subseteq V$ on which $G$ does not act freely is of sufficiently high codimension. Then the quotient space $(V-Z)/G$ provides a model for ${\rm B}G$ up to a certain dimension, roughly equal to the codimension of $Z$. The following proposition is well-known, and states that nice approximations always exist, cf.~\cite[Remark~1.4]{Totaro}:

\begin{proposition} If $G$ is an affine finite type algebraic group scheme over a field $k$, then there always exist $G$-representations $V$ whose nonfree locus $S \subseteq V$ is of arbitrarily high codimension. In particular such a $G$ admits a well-defined ${\rm B}_\et G$ in the homotopy category of motivic spaces $\mathcal{H}(k)$.
\end{proposition}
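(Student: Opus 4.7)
The plan is to construct explicit algebraic approximations to ${\rm B}_\et G$ via a standard Totaro-style construction and then argue well-definedness in $\mathcal{H}(k)$. The first step is to reduce to finding suitable linear representations by choosing a faithful embedding $G \hookrightarrow \GL_N$, which exists because $G$ is an affine algebraic group of finite type over $k$ (this is classical, e.g.\ Waterhouse's text on affine group schemes). From here, any $\GL_N$-representation will restrict to a $G$-representation, and $G$-freeness can be inherited from $\GL_N$-freeness.

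Next, for each $m \geq N$ I would consider $V_m = \Hom_k(k^N, k^m) \cong \A^{mN}_k$ with $\GL_N$ acting by post-composition, and restrict this to $G$. The open subscheme $U_m \subseteq V_m$ of injective maps (equivalently, matrices of full rank $N$) is $\GL_N$-invariant, and $\GL_N$ acts freely on it; restricting to $G$ preserves freeness. The complement $S_m = V_m \setminus U_m$ is the rank-deficiency locus, which has codimension $m - N + 1$ in $V_m$, so taking $m$ large makes this codimension arbitrarily high, proving the first assertion.

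To produce ${\rm B}_\et G$ itself, I would exhibit $U_m / G$ as a smooth $k$-scheme approximating the classifying space. The $\GL_N$-quotient $U_m / \GL_N$ is the Grassmannian $\Gr(N, m)$, and the map $U_m / G \to \Gr(N, m)$ is an étale-locally trivial bundle with fiber $\GL_N / G$, which is quasi-projective by Chevalley's theorem because $G$ is affine. Hence $U_m/G$ exists as a smooth quasi-projective $k$-scheme, and the natural inclusions $V_m \hookrightarrow V_{m+1}$ induce maps $U_m / G \to U_{m+1} / G$ whose colimit is the desired object in $\mathcal{H}(k)$.

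The final step is to verify independence of the choice of embedding $G \hookrightarrow \GL_N$ and of the sequence of approximations. Given two such systems of representations $\{V_m\}$ and $\{V'_m\}$, I would compare both to the diagonal $G$-action on $V_m \oplus V'_m$ via the two projections, and apply the Bogomolov-style double fibration argument (cf.\ \cite[Remark~1.4]{Totaro}) to see that the associated motivic spaces become $\A^1$-weakly equivalent in the limit, since the fibers of each projection are affine spaces minus a locus of unbounded codimension. I expect the main obstacle to be verifying that $U_m/G$ is representable as a scheme rather than merely as an algebraic space: this is exactly where affineness of $G$ enters in an essential way (via Chevalley's theorem), and it is also the point where one must be careful that the construction is functorial enough to glue into a well-defined Ind-object in $\mathcal{H}(k)$.
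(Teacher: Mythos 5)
The paper does not actually prove this proposition; it records it as well-known with a pointer to Totaro's Remark 1.4, and your argument reconstructs exactly that construction: choose a faithful closed embedding $G \hookrightarrow \GL_N$, restrict the $\GL_N$-representation $\Hom_k(k^N,k^m)$ to $G$, let $U_m$ be the full-rank locus (on which $\GL_N$, hence $G$, acts freely), note the rank-drop locus has codimension $m-N+1$, and conclude via Chevalley that $U_m/G$ is quasi-projective because it is a Zariski-locally trivial bundle over $\Gr(N,m)$ with fiber $\GL_N/G$. So the approach is the intended one and the mathematical substance is correct.

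Two small corrections of detail. First, the $\GL_N$-action on $\Hom_k(k^N,k^m)$ that restricts to $G\hookrightarrow\GL_N$ is \emph{pre}-composition (i.e.\ $g\cdot\phi=\phi\circ g^{-1}$, linear in $\phi$), not post-composition; post-composition would be a $\GL_m$-action on the target. Second, since $\GL_N$ is special, the torsor $U_m\to\Gr(N,m)$ is \emph{Zariski}-locally trivial, not just \'etale-locally trivial; this is precisely what one should invoke, together with the quasi-projectivity of $\GL_N/G$, to see that the associated $\GL_N/G$-bundle $U_m/G\to\Gr(N,m)$ is a scheme (effective descent for quasi-projective morphisms is then immediate in the Zariski topology). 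Finally, the Bogomolov double-fibration argument you cite is what Totaro uses to show the cohomological invariants of the approximations stabilize; for the stronger statement that the colimit represents ${\rm B}_\et G$ as a motivic space, one should also invoke the comparison of geometric and simplicial classifying spaces (Morel--Voevodsky, or the di~Lorenzo--Mantovani result the paper cites immediately after this proposition), which your sketch gestures at but does not quite pin down.
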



Totaro's definition is originally stated for Chow groups of classifying spaces, but it holds in a more general context, which we now outline. Recall that a \textit{homotopy module} $\mathbf{M}_\bullet$ is a strictly $\A^1$-invariant $\Z$-graded Nisnevich sheaf of abelian groups equipped with a desuspension isomorphism $\mathbf{M}_n \xto{\sim} (\mathbf{M}_{n+1})_{-1}$ for every $n$. It is a classical fact that $\pi_0 E$ is a homotopy module for any $E\in \SH(k)$, and in fact $\pi_0$ induces an equivalence between the heart of the homotopy $t$-structure on $\SH(k)$ with the category of homotopy modules, with the inverse given by Eilenberg--Mac~Lane spectra for homotopy modules.

The following now allows to compute (or, depending on ones point of view, define) the cohomology of classifying spaces in terms of smooth approximations of the classifying space, and in the generality below was established by di~Lorenzo and Mantovani in \cite[Proposition~2.2.10]{equivariant-CHW}:

\begin{proposition} If $Z \subseteq V$ has codimension $i$, then we have that
\begin{align*}
    H^\bullet \left( {\rm B}G, \mathbf{M} \right) \cong H^\bullet \left( (V-Z)/G, \mathbf{M} \right),
\end{align*}
for $\bullet \le i$.
\end{proposition}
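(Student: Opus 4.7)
The plan is to adapt Totaro's original Chow-theoretic argument to the setting of cohomology in a homotopy module $\mathbf{M}_\bullet$, following the strategy of di Lorenzo and Mantovani. The overall idea is that $\mathrm{B}G$ is constructed as a suitable colimit of smooth approximations of the form $(V-Z)/G$, and so one needs only to show that enlarging the approximation does not affect cohomology in degrees $\leq i$.

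First, I would reduce the statement to a comparison between two approximations. Given representations $V, V'$ with non-free loci $Z, Z'$ of codimensions $i, i'$, the direct sum $V \oplus V'$ carries a $G$-action whose non-free locus $Z''$ is contained in $(Z \oplus V') \cup (V \oplus Z')$, of codimension at least $\min(i, i')$. The key map to analyze is the projection $p\colon (V \oplus V' - Z'')/G \to (V-Z)/G$, which over its image behaves like a vector bundle with a codimension-$i'$ subset removed. By $\mathbb{A}^1$-homotopy invariance of $\mathbf{M}_\bullet$-cohomology, pullback along the ambient vector bundle $((V-Z) \oplus V')/G \to (V-Z)/G$ is an isomorphism; and by the localization sequence \autoref{eqn:localization}, removing a codimension-$i'$ locus induces isomorphisms on $H^\bullet(-,\mathbf{M})$ in degrees $\leq i'$. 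A symmetric argument handles the projection to $(V'-Z')/G$, showing that both approximations admit compatible cohomology-isomorphism maps to $(V \oplus V' - Z'')/G$ in the relevant range.

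Second, I would pass to $\mathrm{B}G$ itself. Writing $\mathrm{B}G$ as the Nisnevich (or \'etale) colimit of a tower of such approximations, and invoking that $\mathbf{M}_\bullet$ lies in the heart of the homotopy $t$-structure on $\SH(k)$ so that representing cohomology turns the colimit into an inverse limit of cohomology groups, one concludes that the restriction $H^\bullet(\mathrm{B}G, \mathbf{M}) \to H^\bullet((V-Z)/G, \mathbf{M})$ is an isomorphism for $\bullet \leq i$ because every comparison map in the system is an iso in that range.

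The main obstacle is the care needed in the first step: because $G$ does not act freely on all of $V \oplus V'$, one cannot directly apply $\mathbb{A}^1$-homotopy invariance on the naive quotient, and one must restrict to the free locus throughout. The resolution is to interleave localization and homotopy invariance, using purity in the spirit of \autoref{ex:i-star-zero-section} to confine the excluded contributions to cohomological degrees strictly beyond the stated range, so that all the comparison maps become isomorphisms precisely where needed.
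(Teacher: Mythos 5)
The paper does not give its own proof of this proposition: it is cited directly from di~Lorenzo and Mantovani \cite[Proposition~2.2.10]{equivariant-CHW}. Your proposal correctly identifies the standard Totaro double-fibration strategy that underlies the cited result, so the overall architecture is right. However, the execution has some genuine errors that would need fixing.

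First, the map you write down, $p\colon (V\oplus V'-Z'')/G\to (V-Z)/G$, is not well-defined: since $Z''\subseteq Z\times Z'$, the scheme $V\oplus V'-Z''$ still contains points lying over $Z$ (any $(z,v')$ with $z\in Z$, $v'\notin Z'$), so the projection does not land in $V-Z$. The correct comparison runs through the open inclusion $((V-Z)\oplus V')/G\hookrightarrow (V\oplus V'-Z'')/G$, followed by the genuine vector bundle projection $((V-Z)\oplus V')/G\to (V-Z)/G$. Second, and more substantively, your codimension bookkeeping is reversed: the complement of $((V-Z)\oplus V')/G$ inside $(V\oplus V'-Z'')/G$ sits over $Z$ and hence has codimension $i$ (the codimension of $Z$ in $V$), not $i'$. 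It is the comparison to $(V'-Z')/G$ that removes a codimension-$i'$ locus. Your final conclusion of isomorphism in degrees $\le\min(i,i')$ happens to be symmetric in $i,i'$, so the error washes out in the end, but the intermediate claims are misattributed.

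Finally, the degree range deserves more care. For a general homotopy module $\mathbf{M}$ the localization sequence \autoref{eqn:localization} is a full long exact sequence, not the right-exact Chow-group sequence: if $W\subseteq X$ is closed of codimension $c$, purity gives $H^j_W(X,\mathbf{M})=0$ for $j<c$, from which one extracts that $H^j(X,\mathbf{M})\to H^j(X-W,\mathbf{M})$ is an isomorphism only for $j\le c-2$ (and injective in degree $c-1$). Your proposal asserts isomorphisms ``in degrees $\leq i'$'' from a codimension-$i'$ removal, which is too strong by two. The paper itself acknowledges the bound is ``roughly equal to the codimension''; matching the precise $\bullet\le i$ in the statement requires checking the codimension convention in the cited source, or accepting a small shift in the bound. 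The step from the directed system of approximations to ${\rm B}G$ itself also needs a $\lim^1$ remark (the Milnor sequence), though in this situation it is harmless because the groups stabilize.
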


Totaro is able to leverage this to prove a K\"{u}nneth formula for the Chow groups of classifying spaces:

\begin{theorem}\label{thm:chow-kunneth} \cite[\S6]{Totaro} Working over a field, assume that $G_1$ and $G_2$ are (finite type) algebraic groups such that the classifying space ${\rm B}G_1$ has enough linear approximations (in the sense of \cite{Totaro-motive}). Then there is a K\"{u}nneth isomorphism
\begin{align*}
    \CH^\bullet({\rm B}G_1 \times {\rm B}G_2) \cong \CH^\bullet({\rm B}G_1) \otimes_\Z \CH^\bullet({\rm B}G_2).
\end{align*}
\end{theorem}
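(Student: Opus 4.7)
The plan is to follow Totaro's smooth approximation method. For each $i$, choose a sequence of $G_i$-representations $V_i^{(n)}$ whose non-free loci $Z_i^{(n)}$ have codimension exceeding $n$, yielding smooth quasi-projective approximations $U_i^{(n)} = (V_i^{(n)} - Z_i^{(n)})/G_i$ that compute $\CH^j({\rm B}G_i)$ for $j \leq n$. Since the product of a free $G_1$-space and a free $G_2$-space is a free $(G_1 \times G_2)$-space, the varieties $U_1^{(n)} \times U_2^{(n)}$ form a compatible family of smooth approximations to ${\rm B}G_1 \times {\rm B}G_2 = {\rm B}(G_1 \times G_2)$. It therefore suffices to exhibit, for each $n$, a K\"{u}nneth isomorphism on these approximations in the relevant degree range and to check that it is compatible with the refinement maps as $n$ grows, so that the isomorphism passes to the limit.

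For the K\"{u}nneth formula on a single approximation, the key input is the hypothesis that ${\rm B}G_1$ has enough linear approximations in the sense of \cite{Totaro-motive}. Concretely this provides, after possibly shrinking $V_1^{(n)}$, a stratification of $U_1^{(n)}$ by locally closed pieces each of which is an affine bundle over an iterated projective bundle; equivalently, the integral Chow motive of $U_1^{(n)}$ is a direct sum of Tate twists. For such mixed Tate varieties, the external product map $\CH^\bullet(U_1^{(n)}) \otimes \CH^\bullet(Y) \to \CH^\bullet(U_1^{(n)} \times Y)$ is an isomorphism for every smooth $Y$: this is the standard consequence of $\A^1$-homotopy invariance, the projective bundle formula, and induction along the stratification using the localization sequence of \autoref{ex:chow-localization}. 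Applying this to $Y = U_2^{(n)}$ yields the K\"{u}nneth isomorphism for the approximations in the required range.

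The main obstacle is the mixed Tate step, which relies essentially on the representation-theoretic origin of the approximations: for a general pair of smooth varieties the K\"{u}nneth map on Chow groups need not be an isomorphism, as torsion and higher $\Tor$ contributions can obstruct it, and the linear approximation hypothesis is precisely what guarantees that such obstructions vanish on $U_1^{(n)}$. Once this step is in hand, the naturality of both the stratification and the external product in the approximation index lets us pass to the inverse limit over $n$, at which point the isomorphisms in each bounded degree range assemble to the asserted ring isomorphism $\CH^\bullet({\rm B}G_1 \times {\rm B}G_2) \cong \CH^\bullet({\rm B}G_1) \otimes_\Z \CH^\bullet({\rm B}G_2)$.
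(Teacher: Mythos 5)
Your proposal matches the approach attributed to Totaro in the paper in all the essential steps: reduce $\mathrm{B}G_i$ to finite-dimensional smooth approximations $U_i^{(n)}$, prove a K\"unneth isomorphism for the products $U_1^{(n)}\times U_2^{(n)}$ in the stable range by induction along a stratification using the localization sequence, and pass to the limit. The paper itself only sketches this reduction and delegates the full argument to Totaro's cited sections, so your write-up is at a comparable level of detail.

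There is, however, one substantive slip. You assert that \emph{having enough linear approximations} in the sense of \cite{Totaro-motive} is ``equivalently'' the condition that the integral Chow motive of $U_1^{(n)}$ is a direct sum of Tate twists, i.e.\ that $U_1^{(n)}$ is cellular (stratified by affine bundles over iterated projective bundles). That is not what ``linear'' means. Linear schemes are defined inductively from affine spaces by a restricted two-out-of-three condition with respect to closed/open decompositions; such schemes are mixed Tate in the triangulated sense, but their Chow groups need not be free abelian and their Chow motive need not split as a direct sum of Tate twists. The paper's own discussion around the theorem draws exactly this distinction: cellularity gives short exact sequences of \emph{free} abelian groups so that $-\otimes_{\Z}\CH^\bullet(Y)$ preserves exactness, whereas the general linear-scheme case needs more care (potential $\Tor$ contributions, the full bookkeeping in Totaro's proof). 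Your argument, as written, only directly handles the cellular case, which is strictly stronger than the hypothesis of the theorem. The inductive localization strategy is the right one, but to cover the general linear hypothesis you would need to quote (or reproduce) the stronger input from Totaro's work rather than the freeness argument. In practice the gap is harmless for the applications in this paper, since the approximations appearing there (Grassmannians, projective spaces, and $\mathbb{G}_{\rm m}$-torsors over these) are genuinely cellular, but it is worth keeping the two notions straight.
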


We should comment a bit about what goes into this proof and into the assumptions. If $X$ is a cellular variety, then its Chow groups are free abelian indexed over the cells. In particular, if we look at the Chow group localization sequence arising from the inclusion of a cell and its complement, we obtain a short exact sequence of free abelian groups. Hence tensoring with $- \otimes_\Z \CH^\bullet(Y)$ for any $Y$ will still be exact. This allows us to prove a K\"{u}nneth theorem if one of the varieties is cellular, by inducting on the codimension of the cells in its stratification \cite[\S6]{Totaro}.

This hinges on an inductive idea --- if $Z$ and $X\setminus Z$ satisfy a K\"{u}nneth theorem, then via localization $X$ will as well. Thus we might expect that a large class of varieties which satisfy a K\"{u}nneth theorem might be one which includes affine space, and is closed under some restricted two-out-of-three property. Indeed this is roughly the definition of a \textit{linear scheme}, a definition due to Janssen \cite{Jannsen} and under slightly different assumptions Totaro \cite{Totaro-motive}. When ${\rm B}G_1$ admits enough linear models, these linear models can be used to provide a K\"{u}nneth theorem in the degrees where they are effective at approximating ${\rm B}G_1$. More generally, one also has a K\"{u}nneth isomorphism for (higher) Chow groups coming from cellularity of varieties, as in \cite[\S6]{Krishna}. 

\begin{remark} In constructing a model for ${\rm B}G$, we want a highly connected variety on which $G$ acts freely, as finite-dimensional approximation modeling ${\rm B}G = {\rm E}G/G$. It is not strictly necessary to start with a $G$-representation $V$, but representations are an (easily accessible and well-understood) source of contractible varieties with $G$-action. The complement $V\setminus S$ is then highly connected whenever $S$ has high codimension, and hence provides a good model for ${\rm E}G$.

  As an example, we claim that finite Grassmannians provide linear models for infinite ones. Indeed, we could take a Stiefel variety $\GL_{n+k}/\GL_k$, on which $\GL_n$ acts freely. This Stiefel variety is highly connected, and after quotienting by the $\GL_n$-action, we obtain a Jouanalou device over the Grassmannian, so it is $\A^1$-equivalent to $\Gr(n,k)$. This confirms what we might suspect, that $\Gr(n,k)$ is a model for $\BGL_n$. Moreover, the Schubert cell stratification of the Grassmannian shows that it is linear (in either the sense of Jannsen or Totaro).
\end{remark}

Using similar techniques, Hudson, Matszangosz and Wendt obtained a K\"{u}nneth theorem for Witt-sheaf cohomology groups, cf.~\cite[Proposition~4.7]{HMW}. Note that the cellularity required for this K\"unneth formula is stronger than the notions of Jannsen and Totaro, and actually requires a stratification by affine spaces. 

\begin{theorem} 
  Let $X_1,X_2\in\Sm_k$ for $k$ perfect of characteristic $\ne 2$, let $\mathcal{L}_i \to X_i$ be line bundles, suppose that $X_1$ is cellular, and suppose that all $H^q(X_i, \mathbf{W}(\mathcal{L}_i))$ are free ${\rm W}(k)$-modules for $i=1,2$. Then we have a K\"{u}nneth isomorphism given by the exterior product map
  \begin{align*}
    H^\bullet \left( X_1 \times X_2, \mathbf{W} \left( \mathcal{L}_1 \sqtimes \mathcal{L}_2 \right) \right) \cong H^\bullet \left( X_1, \mathbf{W}(\mathcal{L}_1) \right) \otimes_{{\rm W}(k)} H^\bullet \left( X_2, \mathbf{W}(\mathcal{L}_2) \right).
  \end{align*}
\end{theorem}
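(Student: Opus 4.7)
The plan is to induct on the length of the affine cellular stratification of $X_1$, using the localization sequence for the removal of each cell. Let me sketch the steps.

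First, I would handle the base case where $X_1 = \A^d$ is affine space. Since the Picard group of $\A^d$ is trivial, any line bundle $\mathcal{L}_1$ is trivial, and by $\A^1$-invariance of Witt-sheaf cohomology together with $H^\bullet(\A^d,\mathbf{W}) \cong \mathbf{W}(k)$ concentrated in degree zero, the external product map collapses to the projection isomorphism $H^\bullet(\A^d\times X_2, \mathbf{W}(\O\sqtimes\mathcal{L}_2)) \cong H^\bullet(X_2,\mathbf{W}(\mathcal{L}_2))$. Here the freeness hypothesis is trivially satisfied.

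For the inductive step, suppose $X_1$ admits a filtration by closed subvarieties with successive quotients affine spaces, and let $C \hookto X_1$ be a closed affine cell of codimension $d$, with open complement $U = X_1 \setminus C$ whose cellular structure is shorter. The localization sequence with twist $\mathcal{L}_1$ reads
\[
\cdots \to H^{\bullet-d}(C, \mathbf{W}(\mathcal{L}_1|_C\otimes\det\nolimits^{-1}N_{C/X_1})) \to H^\bullet(X_1, \mathbf{W}(\mathcal{L}_1)) \to H^\bullet(U, \mathbf{W}(\mathcal{L}_1)) \to \cdots
\]
Since $C\cong \A^{\dim C}$ has trivial Picard group, the twist on $C$ is trivial, so the left-hand term is $\mathbf{W}(k)$ sitting in degree $d$. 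The freeness hypothesis on $H^\bullet(X_1,\mathbf{W}(\mathcal{L}_1))$ forces this long exact sequence to break up into short exact sequences, and moreover ensures the same freeness for $H^\bullet(U,\mathbf{W}(\mathcal{L}_1))$ by a dévissage on the length of the filtration. The inductive hypothesis then applies to $U$ and to $C$.

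Crossing the entire localization sequence with $X_2$ and using purity gives a parallel long exact sequence for $X_1\times X_2, U\times X_2, C\times X_2$ with sheaf $\mathbf{W}(\mathcal{L}_1\sqtimes\mathcal{L}_2)$. Tensoring the first short exact sequence with the free $\mathbf{W}(k)$-module $H^\bullet(X_2,\mathbf{W}(\mathcal{L}_2))$ preserves exactness, and the external product map provides a map of short exact sequences compatible with the inductive hypotheses at the $C$ and $U$ terms. A five-lemma argument then yields the isomorphism at $X_1$.

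The main obstacle I anticipate is bookkeeping the twists correctly in the localization sequence: one must verify that the normal bundle contribution $\det^{-1}N_{C/X_1}$ is absorbed correctly so that the left-hand term in the localization sequence is a shifted copy of $\mathbf{W}(k)$, and that the external product map on cohomology of $C\times X_2$ is compatible under purity with the external product on the shifted $H^\bullet(X_2,\mathbf{W}(\mathcal{L}_2))$. Once the twisted purity isomorphisms are pinned down on the affine cells (where all line bundles trivialize), the homological algebra is routine.
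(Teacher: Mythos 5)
This statement is cited in the paper as \cite[Proposition~4.7]{HMW}; the paper itself offers no proof, so there is no in-text argument to compare against line by line. Your strategy — inducting on an affine cell stratification of $X_1$, removing the closed bottom cell, tensoring the resulting localization sequence with the flat $\mathrm{W}(k)$-module $H^\bullet(X_2,\mathbf{W}(\mathcal{L}_2))$, and applying the five lemma — is the natural one and is very likely the same style of argument as in the cited source (the paper describes the HMW result as obtained ``using similar techniques'' to Totaro's cellular Künneth for Chow groups).

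That said, one step as you have written it is unjustified and, if taken literally, wrong: the claim that freeness of $H^\bullet(X_1,\mathbf{W}(\mathcal{L}_1))$ ``forces this long exact sequence to break up into short exact sequences, and moreover ensures the same freeness for $H^\bullet(U,\mathbf{W}(\mathcal{L}_1))$ by a d\'evissage.'' The ring $\mathrm{W}(k)$ is not a PID, so a quotient of a free $\mathrm{W}(k)$-module by a free submodule need not be free, nor does freeness of the middle term of a long exact sequence force vanishing of connecting maps. The good news is that you do not actually need either assertion: since $H^\bullet(X_2,\mathbf{W}(\mathcal{L}_2))$ is free, hence flat, tensoring the full localization long exact sequence for $(X_1,U,C)$ with it preserves exactness regardless of whether it splits, and the five lemma then closes the induction. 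The inductive hypothesis you need for $U$ is just that $U$ is cellular with fewer cells — which is what you should formulate and carry along, rather than a derived freeness statement. (This also means that, in your argument, the stated hypothesis that $H^\bullet(X_1)$ is free plays no visible role; the chain-complex route, which is another plausible reading of HMW's proof, uses it to kill $\Tor$-terms in the algebraic Künneth sequence, so if you want your induction to genuinely need that hypothesis you should say where it enters.)

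Finally, the point you flag as ``routine bookkeeping'' — that the external product is compatible with the boundary maps of the localization sequence and with the twisted purity isomorphism $H^q_C(X_1,\mathbf{W}(\mathcal{L}_1))\cong H^{q-d}(C,\mathbf{W}(\mathcal{L}_1|_C\otimes\det^{-1}N_{C/X_1}))$ — is not merely bookkeeping and should be explicitly established. In particular, one must check that $N_{C\times X_2/X_1\times X_2}\cong N_{C/X_1}\boxtimes\mathcal{O}$ gives the expected twist $\mathcal{O}\boxtimes\mathcal{L}_2$ after trivializing line bundles on the affine cell $C$, and that the resulting square of long exact sequences is a map of long exact sequences. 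These verifications are standard but are precisely the places the argument can silently fail, so they deserve a sentence each rather than a deferral.
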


We can leverage this, together with Totaro's work, to prove a K\"{u}nneth isomorphism for infinite Grassmannians.

\begin{example}\label{ex:witt-kunneth-grassmannians} For any $m,n\ge 1$, and any line bundles $\mathcal{L} \to \BGL_m$ and $\mathcal{L}' \to \BGL_n$, we have a K\"{u}nneth isomorphism
\begin{align*}
    H^\bullet \left( \BGL_m \times \BGL_n, \mathbf{W} \left( \mathcal{L} \sqtimes \mathcal{L}'\right) \right) \cong H^\bullet \left( \BGL_m, \mathbf{W}(\mathcal{L}) \right) \otimes_{{\rm W}(k)} H^\bullet \left( \BGL_n, \mathbf{W}(\mathcal{L}') \right).
\end{align*}
\end{example}
\begin{proof} Suppose we want to restrict to proving this in the range $\bullet \le s$, and then we want to let $s$ tend to infinity. In such a finite range, since the Witt sheaf extends to a homotopy module $(\mathbf{W})_n$, we can reduce to geometric models of these classifying spaces, which can be chosen to be finite Grassmannians. The Witt-sheaf cohomology groups of finite Grassmannians are free ${\rm W}(k)$-modules by the computations in \cite{Wendt-Gr}, and the finite Grassmannians are cellular, so we can apply the K\"{u}nneth isomorphism for Witt cohomology to conclude.
\end{proof}

\section{The classifying space of quadratically oriented bundles}\label{sec:BSLnc}

In this section we construct the classifying space $\BSL_n^c$ for quadratically oriented rank $n$ bundles, and provide a model for it as an ind-variety.

\subsection{Quadratically oriented vector bundles}

A \textit{quadratic orientation} (called an \textit{orientation} in \cite[Definition~4.3]{Morel}) on a (topological or algebraic) vector bundle $E \to X$ is a choice of isomorphism $\rho \colon \det E \simeq \Theta^{\otimes 2}$, where $\Theta$ is a line bundle on $X$. Phrased differently,  a quadratic orientation is a choice of square root of the determinant bundle.

We remark that quadratically oriented bundles can have different quadratic orientations, corresponding to different choices of square root of the determinant bundle. The different choices are a torsor under the group ${}_2\Pic(X)$ of 2-torsion line bundles on $X$, which parametrizes the square roots of the trivial bundle.

\begin{remark} $\ $
\begin{enumerate}
\item An \textit{oriented} bundle, i.e., one whose determinant is a trivial line bundle, is canonically quadratically oriented.
\item A rank $n$ vector bundle $E \to X$ over a smooth $n$-dimensional base is \textit{relatively oriented} if $\Hom(\det {\rm T}X, \det E)$ is quadratically oriented. If $n$ is odd this is the same as asking $\Hom({\rm T}X,E)$ to be quadratically oriented. Relative orientations play a key role both in real enumerative geometry, cf.~\cite{OkTel}, as well as in $\A^1$-enumerative geometry, cf.~\cite{EulerBW3}.
\item Note that a relatively oriented bundle need not be quadratically oriented, for example the tangent bundle of any smooth variety admits a canonical relative orientation, but the tangent bundle on $\P^{2n}$, for instance, is not quadratically oriented. 
    \item Similarly, there exist quadratically oriented bundles which are not relatively oriented, for instance $\O_{\P^2}(2)^{\oplus 2}$.
\end{enumerate}
\end{remark}

\begin{definition}\label{def:metalinear-grp} \cite[Remark~2.8]{Ananyevskiy} We define the \textit{metalinear group}\footnote{This terminology is taken from \cite[\S 3.3.2]{AHW2}.} $\SL_n^c$ to be the kernel of the homomorphism
\begin{align*}
    \GL_n \times \mathbb{G}_{\rm m} &\to \mathbb{G}_{\rm m} \\
    (g,t) &\mapsto t^{-2}\det(g).
\end{align*}
\end{definition}

In particular $\SL_n^c$-torsors on $X$ are quadratically oriented vector bundles, in the sense described above. 

\subsection{Metalinear Hilbert 90}

The following proposition establishes that the group scheme $\SL_n^c$ is \textit{special}, a result which we will use to discuss its classifying space. We don't know of an explicit result in the literature stating that ${\rm SL}_n^c$ is special, but it is likely clear to anyone who ever considered the question. For example, Ananyevskiy explicitly only considers Zariski-locally trivial ${\rm SL}_n^c$-torsors in \cite{Ananyevskiy}.

\begin{proposition}
\label{prop:slnc-special}
The group ${\rm SL}_n^c$ is special in the sense of Serre, i.e., the natural change-of-topology map is a bijection:
  \begin{align*}
    {\rm H}^1_{\rm Zar}(X,{\rm SL}_n^c)\xrightarrow{\cong} {\rm H}^1_\et(X,{\rm SL}_n^c)
  \end{align*}
\end{proposition}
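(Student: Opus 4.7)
The plan is to recognize $\SL_n^c$ as an extension of special groups. The short exact sequence
\[
1 \to \SL_n \to \SL_n^c \to \Gm \to 1,
\]
given by the second projection $(g,t) \mapsto t$ with kernel $\{(g,1) : \det g = 1\} \cong \SL_n$, exhibits $\SL_n^c$ as an extension of $\Gm$ by $\SL_n$. Both $\SL_n$ and $\Gm$ are special by classical Hilbert 90, and specialness is stable under extensions: given a short exact sequence $1 \to N \to G \to H \to 1$ with $N$ and $H$ special and $P$ an \'etale $G$-torsor on $X$, the quotient $P/N$ is an \'etale-locally trivial $H$-torsor, hence Zariski-locally trivial; the pullback of $P$ along a Zariski trivializing section of $P/N$ is an $N$-torsor, which is Zariski-locally trivial since $N$ is special, thereby producing a Zariski-local trivialization of $P$. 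Applied to $\SL_n^c$, this yields the claim.

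Equivalently, one could argue directly from the defining exact sequence
\[
1 \to \SL_n^c \to \GL_n \times \Gm \to \Gm \to 1,
\]
by identifying $\SL_n^c$-torsors (in any topology refining Zariski) with triples $(E, L, \rho)$ of a rank $n$ vector bundle $E$, a line bundle $L$, and an isomorphism $\rho\colon \det E \xrightarrow{\sim} L^{\otimes 2}$. Hilbert 90 forces any \'etale-locally trivial rank $n$ vector bundle and any \'etale-locally trivial line bundle to be Zariski-locally trivial, while an isomorphism of invertible sheaves is insensitive to the choice of topology between Zariski and \'etale. Hence every \'etale $\SL_n^c$-torsor descends to a Zariski one, and conversely the inclusion is tautological.

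The main obstacle is essentially nonexistent; both approaches reduce immediately to Hilbert 90 for $\SL_n$, $\GL_n$, and $\Gm$. The stability of specialness under extensions is classical, going back to Serre's paper on algebraic bundles, and is arguably the cleanest packaging of the argument. The same reasoning applies verbatim to the Nisnevich topology, giving also the bijection $H^1_{\rm Zar}(X, \SL_n^c) \cong H^1_{\rm Nis}(X, \SL_n^c)$ used implicitly throughout.
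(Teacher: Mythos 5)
Your proof is correct, and your first argument takes a genuinely different route from the paper's. The paper works directly with the defining sequence $1 \to \SL_n^c \to \GL_n\times\Gm \to \Gm \to 1$ (in which $\SL_n^c$ is the \emph{kernel}), passes to the long exact sequence of pointed sets in nonabelian \'etale cohomology over a local ring, and finishes by checking that $\det\times(-)^{-2}$ is surjective on points of a local ring. You instead exhibit $\SL_n^c$ as an \emph{extension} $1 \to \SL_n \to \SL_n^c \to \Gm \to 1$ via the second projection $(g,t)\mapsto t$, and invoke (and correctly sketch a proof of) the classical closure of the class of special groups under extensions. Both ultimately reduce to Hilbert~90, but the packaging differs: yours makes the extension lemma the organizing principle and would apply verbatim to any iterated extension of $\GL_m$'s, $\SL_m$'s and $\mathrm{Sp}_{2m}$'s, while the paper's is more self-contained and avoids citing or reproving that lemma. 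Your second argument, identifying \'etale $\SL_n^c$-torsors with triples $(E,L,\rho)$ and applying Hilbert~90 to $E$ and $L$, is essentially a geometric paraphrase of the paper's cohomological computation; note that its final step --- normalizing the isomorphism $\rho$ after Zariski-locally trivializing $E$ and $L$ --- is exactly the surjectivity of $\det\times(-)^{-2}$ on units of a local ring in disguise, and deserves to be stated.
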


\begin{proof}
  We use the defining short exact sequence 
  \begin{align*}
    0 \to \SL_n^c \to \GL_n \times \mathbb{G}_{\rm m} \xto{\det \times (-)^{-2}} \mathbb{G}_{\rm m} \to 0,
  \end{align*}
  of algebraic groups, which gives rise to an exact sequence of non-abelian \'etale cohomologies
  \[
    {\rm H}^0(X,{\rm GL}_n\times\mathbb{G}_{\rm m})\xrightarrow{\det\times(-)^{-2}} {\rm H}^0(X,\mathbb{G}_{\rm m})\to {\rm H}^1_\et (X,{\rm SL}_n^c)\to {\rm H}^1_\et (X,{\rm GL}_n\times\mathbb{G}_{\rm m})
  \]
  Note that the first cohomologies here are only pointed sets, and exactness at ${\rm H}^1_\et(X,{\rm SL}_n^c)$ only means that the image of the first map $\det\times(-)^{-2}$ equals the preimage of the base-point in ${\rm H}^1_\et(X,{\rm GL}_n\times\mathbb{G}_{\rm m})$. It suffices to show the vanishing of ${\rm H}^1_\et(X,{\rm SL}_n^c)$ for $X={\rm Spec}(R)$ the spectrum of a local ring $R$. In this case, ${\rm H}^1_\et(X,{\rm GL}_n\times\mathbb{G}_{\rm m})=\{\ast\}$ since ${\rm GL}_n\times\mathbb{G}_{\rm m}$ is well-known to be special (Hilbert 90). It remains to see that $\det\times(-)^{-2}$ is surjective, but that is easy to see since any unit in $R$ can be realized as determinant of a matrix.
\end{proof}

\begin{remark}
    Alternatively, we could use the Huruguen--Merkurjev theorem on classification of reductive special groups, cf. \cite{merkurjev:special}. 
\end{remark}

\begin{corollary}\label{cor:bslnc} We have that the following hold:
\begin{enumerate}
    \item The natural change-of-topology maps induce equivalences
    \begin{align*}
        {\rm B}_\Zar \SL_n^c \simeq {\rm B}_{\rm Nis} \SL_n^c \simeq {\rm B}_\et\SL_n^c,
    \end{align*}
    hence we can unambiguously write $\BSL_n^c$ for any of these.

    \item The classifying space $\BSL_n^c$ fits into a pullback diagram of motivic spaces
    \[ \begin{tikzcd}
    \BSL_n^c \rar\dar\pb & \BGm\dar["(-)^2" right]\\
    \BGL_n\rar["\Bdet" below] & \BGm.
    \end{tikzcd} \]
\end{enumerate}
\end{corollary}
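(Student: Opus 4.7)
The plan splits cleanly: part (1) is a direct consequence of specialness (already established in \autoref{prop:slnc-special}), while part (2) comes from applying the bar construction to a pullback of group schemes.

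For part (1), \autoref{prop:slnc-special} gives $H^1_\Zar(X, \SL_n^c) \xrightarrow{\cong} H^1_\et(X, \SL_n^c)$, and Nisnevich cohomology is sandwiched between the two (every Zariski-locally trivial torsor is Nisnevich-locally trivial, and every such is \'etale-locally trivial), so $H^1_{\rm Nis}$ must bijectively agree as well. To pass from agreement of torsor-classifying sets to $\A^1$-weak equivalences of the classifying spaces in $\mathcal{H}(k)$, I would use that the change-of-topology comparison between the bar construction simplicial sheaves in the three topologies is a local weak equivalence whenever the relevant $H^1$s agree over all smooth test schemes, which is precisely what specialness supplies.

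For part (2), I would rewrite the defining short exact sequence
\[
1 \to \SL_n^c \to \GL_n \times \Gm \xrightarrow{(g,t) \mapsto \det(g) t^{-2}} \Gm \to 1
\]
as the scheme-theoretic pullback $\SL_n^c = \GL_n \times_\Gm \Gm$ along $\det$ and $(-)^2$. Since the bar construction preserves limits (as a right adjoint to the loop group functor), applying it to this square of smooth affine group schemes produces a strict pullback square of simplicial sheaves. Because $\det$ and $(-)^2$ are surjections of smooth group schemes in the \'etale topology, the induced maps $\Bdet$ and ${\rm B}(-)^2$ are Kan fibrations of the relevant simplicial sheaves, so the strict pullback is simultaneously a homotopy pullback.

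The main technical subtlety is ensuring that this square survives $\A^1$-localization and agrees with the motivic classifying space $\BSL_n^c$ in $\mathcal{H}(k)$. This is not automatic for arbitrary pullbacks of motivic spaces, but is controlled here because all three groups $\GL_n$, $\Gm$, and $\SL_n^c$ are special smooth affine algebraic groups (the first two by classical Hilbert 90, the last by part (1)). Consequently their bar constructions in any of the three topologies already present the correct motivic classifying spaces, and no additional $\A^1$-correction is required to identify the strict pullback with $\BSL_n^c$.
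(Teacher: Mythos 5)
Your proof is correct but takes a genuinely different route for part (2). The paper passes through the short exact sequence of group schemes $1 \to \SL_n^c \to \GL_n \times \Gm \to \Gm \to 1$, invokes the general machinery (from \cite{AHW2}) that the classifying space functor converts such sequences into $\A^1$-fiber sequences and preserves products, and then reinterprets the resulting fiber of $\Bdet \cdot {\rm B}(-)^{-2}$ as the pullback square. You instead factor $\SL_n^c$ directly as the group-scheme pullback $\GL_n \times_{\Gm} \Gm$ and argue that ${\rm B}$ (qua $\overline{W}$, right adjoint to Kan's loop group) preserves this pullback on the nose, then promote the strict pullback to a homotopy pullback. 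These are two presentations of the same equalizer/kernel, so both routes are natural; yours avoids the fiber-sequence formalism at the cost of having to verify directly that the strict pullback is a homotopy pullback.

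Two small imprecisions in your part (2), neither fatal. First, what you get from local surjectivity of a homomorphism of sheaves of groups is a \emph{local} (stalkwise) Kan fibration, not a global one; this is still enough to compute homotopy pullbacks in the local injective model structure. Second, and more substantively, you lean on $(-)^2 \colon \Gm \to \Gm$ being a fibration, which forces you into the \'etale topology (the squaring map is not a Nisnevich cover in characteristic $\ne 2$), and then you have to worry about whether a homotopy pullback computed \'etale-locally is still one Nisnevich-locally. You can sidestep this entirely: only one leg of the cospan needs to be a fibration, and $\Bdet \colon \BGL_n \to \BGm$ already is one Nisnevich-locally (indeed Zariski-locally), since $\det$ is split surjective. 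With that choice the strict pullback is the Nisnevich-local homotopy pullback with no change of topology, and then, exactly as you say, specialness of all three groups guarantees that the bar constructions are already $\A^1$-local so the square survives $\A^1$-localization unchanged. Part (1) as you wrote it matches the paper's argument.
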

\begin{proof} The first statement is an immediate consequence of the fact that $\SL_n^c$ is special as in \autoref{prop:slnc-special}. For the second statement, we use that the classifying space functor converts fiber sequences of group schemes to $\A^1$-fiber sequences of motivic spaces (cf. the discussions around homogeneous space fiber sequences in \cite{AHW2}) and is product-preserving.
\end{proof}

\subsection{The classifying space \texorpdfstring{$\BSL_n^c$}{BSLnc} as an ind-variety}\label{subsec:ind-variety}

The motivic space $\BGL_n$, while not a variety, can be naturally modeled as an ind-variety, or formal colimit of varieties $\colim_{m\to \infty} \Gr(n,m)$ \cite[Proposition~4.3.7]{MV99}. Similarly, $\BSL_n$ can be modeled as a colimit of the (non-projective) oriented Grassmannians $\colim_{m\to\infty}\widetilde{\Gr}(n,m)$. With this in mind, one may ask whether an analogous statement is true for the classifying space $\BSL_n^c$, and we will discuss this in what follows. To do this, we will in particular want to describe scheme models of the maps appearing in the pullback description of $\BSL_n^c$ in \autoref{cor:bslnc}.  

We begin with the observation that the determinant map $\BGL_n \to \P^\infty$ admits a very natural model in the world of varieties.

\begin{proposition} The determinant map $\BGL_n \to \P^\infty$ is the colimit of the Pl\"{u}cker embeddings $\Gr(n,m) \to \P^{\binom{m+n}{n}-1}$ as $m$ tends to $\infty$.
\end{proposition}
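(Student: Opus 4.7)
The plan is to identify the Pl\"ucker embeddings at each finite level with the classifying maps for the determinants of the tautological bundles, and then conclude by passing to the colimit. First, I would recall the standard ind-variety model $\BGL_n\simeq\colim_m\Gr(n,m)$ with transition maps induced by the inclusions $k^m\hookto k^{m+1}$, and observe that $\P^\infty$ can equally well be modeled as $\colim_m\P^{\binom{m+n}{n}-1}$ along linear embeddings induced by $\Lambda^n(k^{m+n})\hookto\Lambda^n(k^{m+n+1})$, since the dimensions $\binom{m+n}{n}-1$ are cofinal in $\N$.

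Second, the Pl\"ucker embedding $p_m\colon\Gr(n,m+n)\to\P(\Lambda^n k^{m+n})$ sending a rank $n$ subspace $V$ to the line $\Lambda^n V$ is compatible with the transition maps on both sides: the inclusion $\Gr(n,m+n)\hookto\Gr(n,m+n+1)$ corresponds, under $\Lambda^n$, to the linear inclusion $\Lambda^n k^{m+n}\hookto\Lambda^n k^{m+n+1}$. By the universal property of the colimit, the $p_m$ assemble to a morphism of ind-schemes $p_\infty\colon\BGL_n\to\P^\infty$.

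Third, I would identify $p_\infty$ with $\Bdet$. At each finite level the pullback $p_m^\ast \O_{\P(\Lambda^n k^{m+n})}(-1)$ is canonically isomorphic to $\det\gamma_n|_{\Gr(n,m+n)}$, where $\gamma_n$ is the tautological rank $n$ bundle; this is the classical content of the Pl\"ucker embedding (the universal line quotient on $\P(\Lambda^n k^{m+n})$ pulls back to $\Lambda^n\gamma_n=\det\gamma_n$). Passing to the colimit, one gets $p_\infty^\ast\O_{\P^\infty}(-1)\cong\det\gamma_n$ on $\BGL_n$. Since $\BGm\simeq\P^\infty$ represents $\Pic$ on smooth motivic spaces, and the classifying map of $\det\gamma_n$ is by definition $\Bdet$, this forces $p_\infty\simeq\Bdet$ in $\mathcal{H}(k)$.

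The only substantive step is the finite-level identification $p_m^\ast\O(-1)\cong\det\gamma_n$; everything else is either formal (compatibility with transitions, universal property of colimits) or a standard consequence of the representability of $\Pic$ by $\BGm$. I do not expect any essential obstacle beyond bookkeeping of the ind-variety structure and cofinality of the subsystem $\{\binom{m+n}{n}-1\}_m\subseteq\N$.
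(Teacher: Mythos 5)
Your proposal is correct and takes essentially the same route as the paper: identify $p_m^\ast\O(-1)\cong\det\gamma_n$ at each finite level, verify compatibility of Pl\"ucker embeddings with stabilization via $\Lambda^n$ of the inclusion $V\hookrightarrow V\oplus k$, and pass to the colimit. The only difference is that you make explicit two points the paper leaves implicit --- the cofinality of $\{\binom{m+n}{n}-1\}_m$ in $\N$ so that the target colimit is still $\P^\infty$, and the appeal to representability of $\Pic$ by $\BGm$ to conclude $p_\infty\simeq\Bdet$ --- which is a harmless and slightly cleaner packaging of the same argument.
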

\begin{proof} By construction, the hyperplane class $\O(1)$ on projective space pulls back to the top wedge power $\wedge^r \mathcal{S}^\ast$ of the dual tautological bundle over the Grassmannian. Taking duals, we see that the tautological bundle $\O(-1)$ pulls back along the Pl\"{u}cker embedding to the determinant bundle $\wedge^r \mathcal{S}$.

  Note that the Pl\"ucker embeddings are compatible with the stabilization maps on the Grassmannian side. For given $m$, let $V$ be an $(m+n)$-dimensional $k$-vector space, and the Pl\"ucker embedding is given by
  \begin{align*}
    \Gr(n,m)&\to\P^{\binom{m+n}{n}-1} \\
    {\rm span} \{v_1,\dots,v_n\} &\mapsto v_1\wedge\cdots\wedge v_n.
  \end{align*}
  On the Grassmannian side, the stabilization $\Gr(n,m)\to\Gr(n,m+1)$ is induced by the embedding $V=V\oplus\{0\}\hookrightarrow V\oplus k$, with last coordinate zero. This embedding induces an embedding of projective spaces
  \begin{align*}\mathbb{P}\left(\bigwedge\nolimits^n V\right)\hookrightarrow\mathbb{P}\left(\bigwedge\nolimits^n(V\oplus k)\right)
  \end{align*}
  whose image consists of $n$-fold wedges of basis vectors whose last coordinate is zero, and which is a linear subspace of codimension $\dim_k\bigwedge\nolimits^{n-1}V=\binom{m+n}{n-1}$. Consequently, we get a commutative diagram
  \[ \begin{tikzcd}
    {\Gr(n,m)}\rar\dar["\text{Pl}" left] & {\Gr(n,m+1)}\dar["\text{Pl}" right]\\
    \P\left(\bigwedge\nolimits^n V\right)\rar & \P\left(\bigwedge\nolimits^n(V\oplus k)\right),
  \end{tikzcd} \]
  and the colimit of the vertical maps as $m\to \infty$ realizes the determinant map. 
\end{proof}

Unfortunately, the squaring map ${\rm B}\mathbb{G}_{\rm m}\to {\rm B}\mathbb{G}_{\rm m}$ is not representable (in the sense usually used for stacks): for a morphism $X\to{\rm B}\mathbb{G}_{\rm m}$, the source of the base-changed squaring map ${\rm B}\mathbb{G}_{\rm m}\times_{{\rm B}\mathbb{G}_{\rm m}} X\to X$ isn't a scheme. Another way to phrase the problem is that the homotopy fiber of the squaring map is ${\rm B}_\et\mu_2$, which is not a scheme.

Nevertheless, we can give an explicit description of $\BSL_n^c$ as an ind-smooth ind-scheme. This is based on viewing $\BSL_n^c$ as the complement of the zero section of a line bundle over $\BGL_n \times \P^\infty$. In \autoref{cor:bslnc}, we saw that $\BSL_n^c$ can be written as fiber product; this is equivalent to writing $\BSL_n^c$ as homotopy fiber of the morphism
\[
  {\rm B}(\det\times(-)^{-2})\colon \BGL_n\times\P^\infty\to\P^\infty
\]
which for now we'll just call $f$ for simplicity. We may then view $\BSL_n^c$ as the total space of the $\mathbb{G}_{\rm m}$-torsor over $\BGL_n\times \mathbb{P}^\infty$ classified by $f$. Explicitly, there is a pullback square of the form
\[ \begin{tikzcd}
    \BSL_n^c\rar\dar\pb & {\rm E} \mathbb{G}_{\rm m} = \A^\infty\minus\{0\}\dar\\
    \BGL_n \times \P^\infty\rar["f" below] & {\rm B} \mathbb{G}_{\rm m} = \P^\infty,
\end{tikzcd} \]
where the projection $\A^\infty\minus\{0\} \to \P^\infty$ quotients out by the diagonal action of $\mathbb{G}_{\rm m}$. Thinking about each fiber as living inside an affine line, we can think about this as the complement of the zero section of the tautological line bundle over $\P^\infty$. That is, there is a pullback square
\[ \begin{tikzcd}
    f^\ast \O(-1)\rar\dar\pb & \O(-1)\dar\\
    \BGL_n \times \P^\infty \rar["f" below] & \BGm
\end{tikzcd} \]
such that the pullback square describing $\BSL_n^c$ is obtained by taking complements of zero sections. From this we can conclude that $\BSL_n^c$ is the complement of the image of the zero section $z \in \Gamma(\BGL_n \times \P^\infty, f^\ast \O(-1))$. Note that the line bundle $f^\ast\O(-1)$ classified by $f$ could alternatively be written as $\det\boxtimes\O(2)$.

With this, we can now describe $\BSL_n^c$ as an ind-scheme. We can first write $\BGL_n\times\P^\infty$ as colimit of the smooth projective schemes $\Gr(n,m)\times\P^N$. Over each of these, we have a bundle $\det\boxtimes\O(2)$, obtained by pulling back the corresponding bundle $\det\boxtimes\O(2)$ along the inclusion $\Gr(n,m)\times\P^N\to\BGL_n\times\P^\infty$. If we denote the complement of the zero section of $\det\boxtimes\O(2)$ over $\Gr(n,m)\times\P^N$ by $\Gr^c(n,m;N)$, we get the following
\begin{corollary}\label{cor:BSLnc-metalinear-grassmannian} 
We have that
\begin{align*}
    \BSL_n^c \simeq \colim_{m,N\to\infty} \Gr^c(n,m;N).
\end{align*}
\end{corollary}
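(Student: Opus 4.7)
The approach is to leverage the description, already established above, of $\BSL_n^c$ as the complement of the zero section of the line bundle $L = \det\boxtimes\O(2)$ on $\BGL_n\times\P^\infty$, combined with the ind-scheme models $\BGL_n \simeq \colim_m \Gr(n,m)$ (cf.~\cite[Proposition~4.3.7]{MV99}) and $\P^\infty \simeq \colim_N \P^N$. Since filtered colimits commute with finite products in motivic spaces, this gives $\BGL_n\times\P^\infty \simeq \colim_{m,N}\Gr(n,m)\times\P^N$, which is the starting point of the argument.

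First I would verify that the line bundle $L$ restricts, along the inclusions $\Gr(n,m)\times\P^N \hookrightarrow \BGL_n\times\P^\infty$, to the bundle $\det\boxtimes\O(2)$ used in the definition of $\Gr^c(n,m;N)$. The stabilization maps $\Gr(n,m)\hookrightarrow \Gr(n,m+1)$ and $\P^N\hookrightarrow \P^{N+1}$ are linear closed immersions that pull back the tautological subbundles and $\O(-1)$ to their counterparts on the smaller Grassmannian and projective space, so the same holds for determinants and tensor powers. It follows that the $\Gr^c(n,m;N)$ form a filtered diagram of open subschemes compatible with stabilization, each canonically identified with the pullback of $\BSL_n^c \to \BGL_n \times \P^\infty$ along the corresponding inclusion.

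To conclude, I would commute the colimit past the ``complement of the zero section'' operation. The cleanest route is to recall from \autoref{cor:bslnc} that $\BSL_n^c$ is the homotopy pullback of the squaring map $\BGm \to \BGm$ along the map $\BGL_n \times \P^\infty \to \BGm$ classifying $L$. Since homotopy pullbacks commute with filtered colimits in the motivic homotopy category, substituting the colimit presentation of $\BGL_n\times\P^\infty$ produces $\BSL_n^c \simeq \colim_{m,N}\Gr^c(n,m;N)$ directly. The principal subtlety lies in this last interchange: one needs the complement-of-zero-section construction to be compatible with the filtered colimit along the closed immersions involved. This is the main obstacle, though a mild one; alternatively, one can argue that the formation of $\Gm$-torsors associated to line bundles is stable under pullback and commutes with filtered colimits in the base, which is essentially the same observation repackaged.
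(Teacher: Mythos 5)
Your argument follows the same route as the paper, which simply invokes universality of colimits in motivic spaces (``colimits commute with pullbacks'') to push the colimit presentation of $\BGL_n\times\P^\infty$ through the pullback square defining $\BSL_n^c$. You have unpacked the compatibility of the stabilization maps with the line bundle $\det\boxtimes\O(2)$, which is harmless and correct.

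One slip to fix: you describe $\BSL_n^c$ as the pullback of the squaring map $\BGm\to\BGm$ along $f\colon \BGL_n\times\P^\infty\to\BGm$, attributing this to \autoref{cor:bslnc}. That corollary's square has $\BGL_n$ (not $\BGL_n\times\P^\infty$) mapping to $\BGm$ via $\Bdet$, with the squaring map on the other leg. Once you pass to $\BGL_n\times\P^\infty$ and the map $f = \Bdet\cdot(-)^{-2}$, the correct statement (from \autoref{subsec:ind-variety}) is that $\BSL_n^c$ is the homotopy fiber of $f$, i.e., the pullback of $\mathrm{E}\mathbb{G}_{\rm m} = \A^\infty\setminus\{0\}\to\P^\infty$ along $f$, \emph{not} the pullback of the squaring map. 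Your closing remark about $\Gm$-torsors associated to line bundles commuting with filtered colimits in the base is the accurate version, and with that substitution the argument matches the paper's.
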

\begin{proof} Colimits of motivic spaces are universal, meaning they commute with pullbacks. Hence we can model $\BSL_n^c$ as a colimit of the $\Gr^c(n,m;N)$'s.
\end{proof}

\begin{remark}
  \label{rem:metalinear-Gr}
  We will call the colimit
  \[
  \Gr^c(n,\infty):=\colim_{m,N\to\infty}\Gr^c(n,m;N)
  \]
  the \emph{infinite metalinear Grassmannian}. We could also call the finite-dimensional schemes $\Gr^c(n,m;N)$ metalinear Grassmannians, but they don't quite have the look-and-feel of Grassmannians, due to the additional projective space appearing in the definition. It seems that it is impossible to get rid of this additional factor, which is related to the squaring map ${\rm B}\mathbb{G}_{\rm m}\to{\rm B}\mathbb{G}_{\rm m}$ not being representable, as discussed above. In particular, if we want to define metalinear Grassmannians as fiber product of the Pl\"ucker embedding $\Gr(n,m)\to \P^N$ (as a model of the determinant map) and a model $X\to \P^N$ of the squaring map, this seems to always introduce the additional $\P^N$-factor one way or another.
\end{remark}

The above description of $\BSL_n^c$ as complement of the zero section of the line bundle $\det\boxtimes\O(2)$ over $\BGL_n\times\P^\infty$ will be important for a number of arguments in the remainder of the paper. It provides a localization sequence associated to
\begin{align*}
    \im(z) \clhookto f^\ast \O(-1) \ohookfrom f^\ast \O(-1) \minus\im(z),
\end{align*}
where $z$ is the zero section, whose image is $\BGL_n \times \P^\infty$. Similarly we may contract the fibers of the line bundle to see that the total space is also $f^\ast\O(-1) \simeq \BGL_n \times \P^\infty$. The complement is equivalent to $f^\ast \O(-1)\minus\im(z)\simeq \BSL_n^c$ as we have argued above. We will use the associated localization sequence to deduce information about various cohomology theories of $\BSL_n^c$.

\subsection{Cellularity and dualizability}

The suspension spectra of Grassmannians are well-known to be cellular and strongly dualizable. This fact is important in a number of places, for example in the computation of homology of the Thom spectrum MGL. An analogous result for oriented Grassmannians was proved in \cite[Lemma~4.15]{bachmann:hopkins}. We observe that the same argument can also be used to show that the metalinear Grassmannians enjoy the same properties: 

\begin{proposition}
  \label{prop:cellular-dualizable}
  The suspension spectrum $\Sigma^\infty_+\Gr^c(n,m;N)$ of the metalinear Grassmannian of \autoref{rem:metalinear-Gr} is cellular and strongly dualizable.
\end{proposition}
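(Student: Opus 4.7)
The plan is to exploit the description of $\Gr^c(n,m;N)$ as the complement of the zero section of the line bundle $L:=\det\boxtimes\O(2)$ on the smooth projective variety $X:=\Gr(n,m)\times\P^N$, and then use a standard cofiber sequence together with the closure properties of cellularity and strong dualizability under cofibers. Since both properties are preserved under taking cofibers in $\SH(k)$, it suffices to identify two of the three terms of our sequence as cellular and strongly dualizable.

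The Morel--Voevodsky purity (or direct inspection of the $\A^1$-contraction of the fibers onto the zero section) furnishes a cofiber sequence
\[
\Sigma^\infty_+\Gr^c(n,m;N)\longrightarrow \Sigma^\infty_+ X\longrightarrow \Th(L).
\]
First I would argue that $\Sigma^\infty_+ X$ is cellular and strongly dualizable: the Grassmannian $\Gr(n,m)$ and the projective space $\P^N$ both carry Schubert stratifications by affine cells, so their suspension spectra are cellular; cellularity passes through the smash product $\Sigma^\infty_+(A\times B)\simeq \Sigma^\infty_+ A\wedge\Sigma^\infty_+ B$, and strong dualizability is automatic for smooth projective varieties by Atiyah duality (or equivalently, by repeated application of the localization/purity triangle along the Schubert stratification).

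Next I would identify $\Th(L)$ as cellular and strongly dualizable. The standard model $\Th(L)\simeq \P(L\oplus\O)/\P(\O)$ writes it as the cofiber of a closed immersion of smooth projective varieties. The projective bundle theorem exhibits $\Sigma^\infty_+\P(L\oplus\O)$ as an iterated Tate-twisted extension of $\Sigma^\infty_+ X$, so it is cellular and strongly dualizable; the same holds for $\P(\O)\simeq X$; and taking the cofiber preserves both properties. Combining these two items with the cofiber sequence above gives the result.

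The main obstacle is really only bookkeeping: one needs to know that cellularity (in the sense of being built out of Tate spheres) and strong dualizability are both closed under cofibers and under the projective bundle construction over cellular/dualizable bases. These are standard and appear, in the essentially identical context of oriented Grassmannians, in \cite[Lemma~4.15]{bachmann:hopkins}; our situation differs only in that the base $X$ carries an extra $\P^N$-factor and the line bundle acquires an $\O(2)$-twist, neither of which affects the formal structure of the argument.
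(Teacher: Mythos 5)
Your proof is correct and takes essentially the same route as the paper: the same cofiber sequence $\Sigma^\infty_+\Gr^c(n,m;N)\to\Sigma^\infty_+(\Gr(n,m)\times\P^N)\to\Th(\det\boxtimes\O(2))$ is used, and strong dualizability is deduced, as in the paper, from dualizability of Grassmannians, projective spaces and Thom spaces over them. The one place where you diverge slightly is the cellularity argument: you appeal to the thickness of the subcategory of cellular spectra (two-out-of-three in the cofiber sequence, plus the projective bundle theorem applied to $\P(L\oplus\O)$), whereas the paper instead lifts the cell structure on $\Gr(n,m)\times\P^N$ directly to the complement of the zero section, producing an explicit stratification by cells $\A^d\times\Gm$ and hence an honest (even unstable) cell structure on $\Gr^c(n,m;N)$. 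Your formal route is a little shorter and avoids the geometric bookkeeping; the paper's route is more constructive and, in particular, provides the cell attachment data (cofiber sequences $X\setminus X_i\to X\setminus X_{i-1}\to\Th(N_i)$) that can be reused for induction or unstable statements. Either is acceptable; for the purposes of the proposition as stated, the two are interchangeable.
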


\begin{proof}
  The proof follows the arguments in \cite[Lemma~4.15]{bachmann:hopkins}.

  To show strong dualizability, we use the description of $\Gr^c(n,m;N)$ as complement of the zero section of $f^\ast\mathcal{O}_{\P^\infty}(-1)\cong\det\boxtimes\O(2)$ on $\Gr(n,m)\times\P^N$. We consequently get a cofiber sequence (of suspension spectra): 
  \begin{align*}
    \Gr^c(n,m;N)\to\Gr(n,m)\times\mathbb{P}^N\to\Th(\det\boxtimes\O(2))
  \end{align*}
  The strong dualizability of $\Gr^c(n,m;N)$ then follows from the well-known strong dualizability of Grassmannians $\Gr(n,m)$, projective spaces $\P^N$ and Thom spaces over these.

  To show cellularity, we can use that $\Gr(n,m)\times\P^N$ has a well-known cell structure: it is a projective homogeneous variety for $\GL_{n+m}\times\GL_N$, and the cells are obtained as orbits of a Borel subgroup. This can be turned into an unstable cell structure as described in \cite[Section~3.3]{cells}. Realizing $\Gr^c(n,m;N)$ as complement of the zero-section of a line bundle over $\Gr(n,m)\times\P^N$, we can lift the cell structure to $\Gr^c(n,m;N)$: over each cell of $\Gr(n,m)\times\P^N$, the line bundle trivializes and therefore we have a stratification of $\Gr^c(n,m;N)$ by cells of the form $\mathbb{A}^d\times\mathbb{G}_{\rm m}$ (where the dimension $d$ of the affine space will depend on the cell). This can be turned into an unstable (or stable) cell structure, where cell attachments happen via cofiber sequences $X\setminus X_i\to X\setminus X_{i-1}\to\Th(N_i)$ with $\Th(N_i)$ a wedge of spheres ${\rm S}^{2n-1,n}$. Inductively, this would also be an alternative way of seeing the strong dualizability.
\end{proof}

\begin{remark}
  At this point it is not quite clear if we can write the metalinear Grassmannians as homogeneous spaces under $\SL_n^c$. If possible, the cellularity could also be deduced from the Bruhat decomposition for reductive groups, as in \cite[Lemma~4.15]{bachmann:hopkins}.
\end{remark}

\section{Oriented cohomologies of \texorpdfstring{$\BSL_n^c$}{BSLnc}}\label{sec:oriented-theories}

In this section we compute the cohomology of $\BSL_n^c$ in various $\GL$-oriented cohomology theories, namely Chow groups and motivic cohomology. We further investigate the action of Steenrod squares on the mod two Chow groups of $\BSL_n^c$, which will be needed in \autoref{sec:chow-witt} to compute the Chow--Witt groups of $\BSL_n^c$.

\subsection{Chow rings and possible twists on \texorpdfstring{$\BSL_n^c$}{BSLnc}}

In this section we compute the Chow rings for $\BSL_n^c$. These will be needed both as piece of the Chow--Witt computation as well as to understand the possible twists in  $\Pic(\BSL_n^c)/2$ that may appear in the Chow--Witt groups of $\BSL_n^c$.

Via the discussion in \autoref{subsec:ind-variety}, we have a localization sequence associated to
\begin{equation}\label{eqn:localization-seq-bslnc}
\begin{aligned}
    \BGL_n \times \P^\infty \clhookto \BGL_n \times \P^\infty \ohookfrom \BSL_n^c.
\end{aligned}
\end{equation}

\begin{proposition} For any $i$, there is a four-term localization sequence
\begin{align*}
    \CH^i(\BGL_n \times \P^\infty) \xto{-\cdot c_1(f^\ast\O(-1))} \CH^{i+1}(\BGL_n \times \P^\infty) \to \CH^{i+1}(\BSL_n^c) \to 0,
\end{align*}
where the first map is multiplication by $c_1 \left( f^\ast \O(-1) \right)$ by \autoref{ex:i-star-zero-section}.
\end{proposition}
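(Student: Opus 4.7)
The plan is to directly apply the localization sequence for Chow groups from \autoref{ex:chow-localization} to the geometric setup constructed in \autoref{subsec:ind-variety}. Recall that $\BSL_n^c$ is realized as the complement of the image of the zero section of the line bundle $f^\ast \O(-1) \cong \det\boxtimes\O(2)$ over $\BGL_n \times \P^\infty$, and that the total space of this line bundle is $\A^1$-equivalent to its base.

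First I would invoke the localization sequence from \autoref{ex:chow-localization} for the closed immersion $z\colon \BGL_n\times\P^\infty\clhookto f^\ast\O(-1)$ of the zero section, with open complement $\BSL_n^c$. Since $z$ is a codimension-one closed immersion, this sequence takes the form
\[
  \CH^i(\BGL_n \times \P^\infty) \xrightarrow{z_\ast} \CH^{i+1}(f^\ast \O(-1)) \to \CH^{i+1}(\BSL_n^c) \to 0.
\]
Next, $\A^1$-homotopy invariance of Chow groups identifies $\CH^{i+1}(f^\ast\O(-1))$ with $\CH^{i+1}(\BGL_n \times \P^\infty)$ via pullback along the bundle projection. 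Under this identification, the pushforward $z_\ast$ translates into multiplication by the Euler class of the normal bundle of $z$, which is simply $c_1(f^\ast\O(-1))$; this is exactly the content of \autoref{ex:i-star-zero-section}.

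The only delicate issue is that we are operating at the level of the ind-scheme $\BSL_n^c\simeq\colim_{m,N\to\infty}\Gr^c(n,m;N)$ rather than a single smooth variety. I would address this by running the localization sequence on each finite-dimensional approximation $\Gr^c(n,m;N)\hookto\Gr(n,m)\times\P^N$, to which the classical Chow group localization sequence applies verbatim, and then passing to the colimit. The finite-dimensional approximation results discussed after \autoref{thm:chow-kunneth} ensure that for any fixed codimension $i+1$, the Chow groups stabilize once $m$ and $N$ are large enough relative to $i$, so the inverse-limit process does not create any exactness issues. I do not anticipate a major obstacle beyond bookkeeping the colimits cleanly.
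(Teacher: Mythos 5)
Your proposal is correct and is precisely the argument the paper intends: apply the Chow-group localization sequence to the zero section of $f^\ast\O(-1)$ with complement $\BSL_n^c$, identify the total space with the base via $\A^1$-invariance, and use \autoref{ex:i-star-zero-section} to recognize the pushforward as multiplication by the Euler class, passing to colimits over the finite approximations $\Gr^c(n,m;N)$. The paper states the proposition without further proof because it follows directly from the preceding discussion, which is exactly what you reconstruct.
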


We'd like to leverage this to compute the Chow groups of $\BSL_n^c$. First note that we can apply the K\"{u}nneth formula for Chow groups (\autoref{thm:chow-kunneth}) to obtain the following computation.

\begin{corollary} We have that
\begin{align*}
    \CH^\bullet \left( \BGL_n \times \P^\infty \right) \cong \Z[c_1, \ldots, c_n, \theta]
\end{align*}
with the class $\theta=c_1(\O_{\mathbb{P}^\infty}(-1))$ of degree $|\theta|=1$, and $|c_i| = i$. 
\end{corollary}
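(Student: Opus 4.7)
The plan is to invoke Totaro's Künneth theorem for Chow groups of classifying spaces (\autoref{thm:chow-kunneth}) together with the classical computations of $\CH^\bullet(\BGL_n)$ and $\CH^\bullet({\rm B}\mathbb{G}_{\rm m})$. Both factors are classifying spaces, so the statement is really about combining two known computations via a Künneth isomorphism.

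First I would verify the hypotheses. The space $\BGL_n$ admits enough linear approximations by finite Grassmannians $\Gr(n,m)$: as noted in the remark following the Künneth statement, $\Gr(n,m)$ is linear in the sense of Jannsen and Totaro via its Schubert cell stratification, and the Grassmannian approximates $\BGL_n$ in arbitrarily high cohomological degrees by the proposition of di Lorenzo--Mantovani quoted earlier. Similarly, ${\rm B}\mathbb{G}_{\rm m} \simeq \P^\infty$ is approximated by finite projective spaces $\P^N$, which are cellular with affine-space strata, hence linear.

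Next I would recall the input computations: $\CH^\bullet(\BGL_n) \cong \Z[c_1,\dots,c_n]$ with $|c_i|=i$ (this is classical, going back to the computation of Chow rings of Grassmannians and carried through the Totaro approximation), and $\CH^\bullet(\P^\infty) \cong \Z[\theta]$ with $|\theta|=1$ and $\theta = c_1(\O_{\P^\infty}(-1))$, obtained as a limit of $\CH^\bullet(\P^N) = \Z[\theta]/(\theta^{N+1})$. Applying \autoref{thm:chow-kunneth} in each bounded range of cohomological degrees (where a suitable finite-dimensional approximation of $\BGL_n$ works) then yields a multiplicative isomorphism
\[
  \CH^\bullet(\BGL_n \times \P^\infty) \;\cong\; \CH^\bullet(\BGL_n) \otimes_{\Z} \CH^\bullet(\P^\infty) \;\cong\; \Z[c_1,\dots,c_n,\theta].
\]

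There is no real obstacle; the only point requiring care is that the Künneth isomorphism must be multiplicative, which is part of the statement of the exterior-product Künneth formula, so that the ring structure on the target is the evident polynomial ring. Passing to the colimit in the approximating variables preserves the polynomial description in each fixed codimension.
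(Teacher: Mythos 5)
Your proposal is correct and follows the same route as the paper, which simply invokes Totaro's K\"unneth theorem (\autoref{thm:chow-kunneth}) together with the classical computations of $\CH^\bullet(\BGL_n)$ and $\CH^\bullet(\P^\infty)$. Your additional remarks verifying the linear-approximation hypothesis via Schubert-cell stratifications of Grassmannians and passing to the colimit are exactly the unstated details the paper is relying on.
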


We see then that $\CH^\bullet(\BSL_n^c)$ is the cokernel of multiplication by the first Chern class of $f^\ast \O(-1)$, which we compute as follows:

\begin{proposition} The first Chern class of the pullback of $\O(-1)$ is
\begin{align*}
    c_1 \left( f^\ast \left( \O(-1) \right) \right) = c_1 - 2\theta.
\end{align*}
\end{proposition}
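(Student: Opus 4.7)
The plan is to factor the map $f$ so that the pullback of $\mathcal{O}(-1)$ decomposes as an external tensor product of two easily-identified line bundles, then compute the first Chern class of each factor separately. Explicitly, $f = B(\det \times (-)^{-2})$ is the composite of $(B\det) \times (B(-)^{-2}) \colon \BGL_n \times \P^\infty \to \BGm \times \BGm$ with the multiplication map $m \colon \BGm \times \BGm \to \BGm$ coming from the $H$-space structure on $\BGm$. Since $m$ classifies the tensor product of the two pulled-back tautological line bundles, the line bundle classified by $f$ is
\[
f^*\mathcal{O}(-1) \;\cong\; (B\det)^*\mathcal{O}(-1) \;\boxtimes\; (B(-)^{-2})^*\mathcal{O}(-1).
\]

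Each factor is elementary. On the $\BGL_n$ side, $B\det$ classifies the determinant of the tautological rank $n$ bundle, so $(B\det)^*\mathcal{O}(-1) = \det \mathcal{E}_{\mathrm{univ}}$ and hence contributes $c_1(\det \mathcal{E}_{\mathrm{univ}}) = c_1$. On the $\P^\infty$ factor, the map $(-)^{-2} \colon \mathbb{G}_{\mathrm{m}} \to \mathbb{G}_{\mathrm{m}}$ is multiplication by $-2$ on the character group, and consequently $B(-)^{-2}$ acts as multiplication by $-2$ on $H^2(\BGm) \cong \mathbb{Z}$. Therefore $(B(-)^{-2})^*\mathcal{O}(-1)$ has first Chern class $-2\theta$.

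Applying additivity of $c_1$ under tensor products (equivalently, the Whitney sum formula in degree $1$) gives
\[
c_1(f^*\mathcal{O}(-1)) \;=\; c_1 + (-2\theta) \;=\; c_1 - 2\theta,
\]
which is the claim. The computation is routine bookkeeping once $f$ has been decomposed as above; the only possible pitfall is consistent tracking of sign conventions, since $\theta = c_1(\mathcal{O}_{\mathbb{P}^\infty}(-1))$ is used rather than $c_1(\mathcal{O}_{\mathbb{P}^\infty}(1))$. I do not foresee any substantive obstacle.
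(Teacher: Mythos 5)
Your proof is correct and takes essentially the same approach as the paper: both decompose $f^\ast\mathcal{O}(-1)$ as an external tensor product of the two line bundles pulled back from each factor, identify the $\BGL_n$ contribution as $c_1$ via $c_1(\det E)=c_1(E)$, and identify the $\P^\infty$ contribution as $-2\theta$ by tracking the effect of $(-)^{-2}$ on line bundles. The paper phrases the $\P^\infty$ computation slightly more concretely (squaring pulls $\mathcal{O}(-1)$ back to $\mathcal{O}(-2)$, inverting gives $\mathcal{O}(2)$), while you phrase it via the character group, but these are the same calculation.
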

\begin{proof} We see that $f^\ast \O(-1)$ will be the external tensor product of the pullback of bundles to each of $\BGL_n$ and $\P^\infty$. This becomes the tensor product of line bundles, which will translate to addition on $\CH^1$. For the determinant map
\begin{align*}
    \Bdet\colon \BGL_n \to \P^\infty,
\end{align*}
the pullback of $c_1$ will be $c_1$, since $c_1(\det E) = c_1(E)$ for any vector bundle $E$. The squaring map on $\P^\infty$ will pull back $\O(-1)$ to $\O(-2)$, and then inverting it will send it to $\O(2)$. Since $\O(-1)$ corresponds to $\theta$, we have that $\O(2)$ corresponds to $-2\theta$.
\end{proof}

\begin{corollary}\label{cor:chow-product} We have that
\begin{align*}
    \CH^\bullet \left( \BSL_n^c \right) &= \frac{\Z[c_1, \ldots, c_n,\theta]}{\left\langle c_1 - 2\theta \right\rangle} \\
    \Ch^\bullet(\BSL_n^c) &= \Z[\bar{c}_2, \ldots, \bar{c}_n,\theta]
\end{align*}
\end{corollary}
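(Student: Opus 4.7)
The plan is to combine the Künneth formula for Chow groups with the four-term localization sequence written above. Since $\BGL_n$ has sufficiently nice linear approximations by finite Grassmannians, \autoref{thm:chow-kunneth} yields
\[
  \CH^\bullet(\BGL_n \times \P^\infty) \cong \Z[c_1, \ldots, c_n, \theta],
\]
with $c_i$ the universal Chern classes and $\theta = c_1(\O_{\P^\infty}(-1))$. Feeding this into the localization sequence coming from the zero section of $f^\ast\O(-1)$ over $\BGL_n\times\P^\infty$, and using the first Chern class computation $c_1(f^\ast\O(-1)) = c_1 - 2\theta$ already established, the pushforward becomes multiplication by $c_1 - 2\theta$.

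Next I would observe that $\Z[c_1,\ldots,c_n,\theta]$ is an integral domain, so multiplication by the nonzero element $c_1 - 2\theta$ is injective. Exactness of the localization sequence then forces the cokernel to be $\Z[c_1,\ldots,c_n,\theta]/(c_1 - 2\theta)$, and since the restriction map to the open complement is a ring homomorphism, this identification is one of graded rings. This gives the first formula. Eliminating $c_1 = 2\theta$ shows that $\CH^\bullet(\BSL_n^c) \cong \Z[c_2,\ldots,c_n,\theta]$ as an abelian group, in particular it is $\Z$-torsion-free.

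For the mod-$2$ statement, the torsion-freeness just established implies via the universal coefficient pattern coming from the short exact sequence $0\to\mathbf{K}_n^{\rm M}\xrightarrow{2}\mathbf{K}_n^{\rm M}\to\mathbf{K}_n^{\rm M}/2\to 0$ that $\Ch^\bullet(\BSL_n^c) \cong \CH^\bullet(\BSL_n^c)\otimes_{\Z}\F_2$. Reducing the presentation modulo $2$ sends $c_1 - 2\theta$ to $\bar{c}_1$, so
\[
  \Ch^\bullet(\BSL_n^c) \cong \F_2[\bar{c}_1,\ldots,\bar{c}_n,\theta]/(\bar{c}_1) \cong \F_2[\bar{c}_2,\ldots,\bar{c}_n,\theta],
\]
matching the stated answer.

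There is no real obstacle here: the entire argument is a formal consequence of the preceding computation of $c_1(f^\ast\O(-1))$, the localization sequence, and Künneth. The one point that deserves a sentence of care is upgrading the additive identification of the cokernel to a ring isomorphism, which is automatic because the localization sequence is the cofiber sequence of pullback along an open immersion, and pullback is multiplicative.
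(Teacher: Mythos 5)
Your proposal is correct and follows the same route the paper takes implicitly: Künneth for $\CH^\bullet(\BGL_n\times\P^\infty)$, the localization sequence for the zero section of $f^\ast\O(-1)$, and the computation $c_1(f^\ast\O(-1)) = c_1 - 2\theta$. The paper states the corollary as an immediate consequence of the two preceding propositions; you simply spell out the cokernel identification, the multiplicativity, and the mod-$2$ reduction, all of which are the intended steps.

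One small economy worth noting: you do not actually need the injectivity of multiplication by $c_1-2\theta$ to identify the cokernel, since the four-term localization sequence already ends in $\CH^{i+1}(\BSL_n^c)\to 0$; the cokernel statement is read off directly. Injectivity (or equivalently torsion-freeness of the quotient, via eliminating $c_1=2\theta$) is, however, useful for the mod-$2$ step and later for the Chow--Witt fiber-product argument, so it is a reasonable thing to record.
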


From this discussion we see that
\begin{align*}
    \CH^1 \left( \BGL_n \times \P^\infty \right) \cong \Z \times \Z,
\end{align*}
where the copies of $\Z$ are generated by $\theta$ and $c_1$. As possible twists are determined by the mod 2 Picard group, we can mod out above to get
\begin{align*}
    \Ch^1 \left( \BGL_n \times \P^\infty \right) \cong \Z/2\Z \times \Z/2\Z,
\end{align*}
where one factor comes from the determinant of the universal bundle on $\BGL_n$ and the other factor comes from the determinant of the universal bundle on $\P^\infty$. We can write the four possible twists as
\begin{align*}
    &\O_{\BGL_n \times \P^\infty},\quad \quad &\O_{\BGL_n} \sqtimes \O_{\P^\infty}(-1),\\
    &\O_{\BGL_n}(-1)\sqtimes \O_{\P^\infty}, \quad \quad & \O_{\BGL_n}(-1) \sqtimes \O_{\P^\infty}(-1).
\end{align*}
Since the map
\begin{align*}
    \CH^1 \left( \BGL_n \times \P^\infty \right) \to \CH^1 \left( \BSL_n^c \right)
\end{align*}
is the quotient by $c_1 - 2\theta$, we get two possible twists for $\BSL_n^c$, namely $\O_{\P^\infty}(-1)$ and the trivial one.

To connect this to the picture of quadratically oriented vector bundles, note that the bundle we denoted by $\O_{\P^\infty}(-1)$ above is the bundle $\Theta$ providing the quadratic orientation. Restricting $f^\ast\mathcal{O}(-1)$ to the complement of the zero section forces $\mathcal{O}_{\BGL_n}(-1)\boxtimes\mathcal{O}_{\P^\infty}$ to be isomorphic to $\mathcal{O}_{\BGL_n}\boxtimes\mathcal{O}_{\P^\infty}(-2)$, i.e., $\Theta^2\cong\det$ on $\BSL_n^c$ which on the level of the Picard group is encoded in $c_1-2\theta$.

\subsection{The Steenrod square action}

In this section we characterize how $\Sq^2$ acts on the mod 2 Chow groups of $\BSL_n^c$. This helps us to understand the image of the Bockstein homomorphism, which will in turn allow us to compute the ${\bf I}^j$-cohomology of $\BSL_n^c$ as well as its Chow--Witt theory.

We note that the maps $\BSL_n \to \BSL_n^c \to \BGL_n$ induce pullback maps on $\Ch^\bullet(-)$ which are compatible with the Steenrod algebra structure --- that is, the pullbacks are morphisms of modules over the Steenrod algebra. In particular, we have a commutative diagram
\[ \begin{tikzcd}
    \Ch^j(\BGL_n)\rar\dar["\Sq^2"] & \Ch^j(\BSL_n^c)\dar["\Sq^2"]\rar & \Ch^j(\BSL_n)\dar["\Sq^2"]\\
    \Ch^{j+1}(\BGL_n)\rar & \Ch^{j+1}(\BSL_n^c)\rar & \Ch^{j+1}(\BSL_n).
\end{tikzcd} \]
We can use this to understand how $\Sq^2$ acts on $\Ch^\bullet(\BSL_n^c)$. Via \cite[Remark~10.5]{Fasel-IJ} or \cite[p.~947]{HornbostelWendt}, we have that
\begin{align*}
    \Sq^2(\bar{c}_{2i})=\bar{c}_{2i+1}
\end{align*}
in $\Ch^\bullet(\BSL_n)$. From \cite[Proposition~3.12]{Wendt-Gr} we have that $\Sq^2$ acts on $\Ch^\bullet(\BGL_n)$ by
\begin{align*}
    \Sq^2(\bar{c}_j) = \bar{c}_1 \bar{c}_j + (j-1)\bar{c}_{j+1}.
\end{align*}
Hence, via the commutative diagram above, we observe that the action of $\Sq^2$ on $\Ch^\bullet(\BSL_n^c)$ kills Chern classes of odd degree, and increases the indices on Chern classes of even degree. We also see that $\Sq^2(\bar{c}_n) = 0$ by consideration of degree.

It then suffices to understand how $\Sq^2$ acts on $\theta$. Via the map $\BSL_n^c \to \P^\infty$, we get a ring homomorphism
\begin{align*}
    \Z/2[t] \cong \Ch^\bullet(\P^\infty) \to \Ch^\bullet(\BSL_n^c) \cong \Z/2[\bar{c}_2, \ldots, \bar{c}_n,\theta],
\end{align*}
compatible with the Steenrod algebra action, and sending $t\mapsto \theta$. Since $\P^\infty = \BGL_1$, we understand the action of $\Sq^2$, namely it sends $\Sq^2(t) = t^2$ in $\Ch^\bullet(\P^\infty)$, and so the same happens to $\theta$ in $\Ch^\bullet(\BSL_n^c)$ as well.

Since $\Ch^1(\BSL_n^c)\cong\Z/2\Z$, there is also a twisted Steenrod square associated to the non-trivial line bundle class $\theta=[\mathcal{O}_{\mathbb{P}^\infty}(-1)]$. As usual, it differs from the untwisted Steenrod square by multiplication with $\theta$:
\[
\Sq^2_{\Theta}(x)=\theta\cdot x+\Sq^2(x).
\]
As on $\mathbb{P}^\infty$, the twisted Steenrod square maps $1\in\Ch^0$ to the class $\theta$ of the twisting line bundle, and annihilates $\theta$, cf. e.g. \cite[Section~3.6]{Wendt-Gr}. 

We summarize these results in the following.

\begin{proposition}\label{prop:action-sq2} The action of
\begin{align*}
    \Sq^2 \colon \Ch^\bullet(\BSL_n^c) &\to \Ch^{\bullet+1}(\BSL_n^c)
\end{align*}
is given by sending $\theta \mapsto \theta^2$, and
\begin{align*}
    \bar{c}_i &\mapsto \begin{cases} \bar{c}_{i+1} & 2\mid i,\ i<n \\ 0 & 2\nmid i,\ i<n \\ 0 & i=n. \end{cases}
\end{align*}
The twisted Steenrod square is given by
\[
\Sq^2_{\Theta}(x)=\theta\cdot x+\Sq^2(x).
\]
\end{proposition}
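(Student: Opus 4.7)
The plan is to verify the formulas on the polynomial generators $\bar{c}_2,\ldots,\bar{c}_n,\theta$ of $\Ch^\bullet(\BSL_n^c)=\Z/2[\bar{c}_2,\ldots,\bar{c}_n,\theta]$ by naturality of $\Sq^2$ along three structure maps: the two projections $p\colon\BSL_n^c\to\BGL_n$ and $q\colon\BSL_n^c\to\P^\infty$ from the pullback square of \autoref{cor:bslnc}, together with the canonical map $\iota\colon\BSL_n\to\BSL_n^c$ induced by $\SL_n\hookrightarrow\SL_n^c$.

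First I would handle $\theta$. By construction $\theta=q^\ast t$ where $t\in\Ch^1(\P^\infty)$ is the hyperplane class, and since $\Sq^2(t)=t^2$ on $\P^\infty=\BGL_1$, naturality of $\Sq^2$ along $q$ gives $\Sq^2(\theta)=\theta^2$.

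Next, for the Chern classes $\bar{c}_i$ with $2\le i<n$, I would pull back the standard Wu-type formula $\Sq^2(\bar{c}_i)=\bar{c}_1\bar{c}_i+(i-1)\bar{c}_{i+1}$ from $\Ch^\bullet(\BGL_n)$ along $p$. The decisive simplification is that the relation $c_1=2\theta$ in $\CH^\bullet(\BSL_n^c)$ from \autoref{cor:chow-product} reduces modulo $2$ to $\bar{c}_1\equiv 0$, so the first summand vanishes on $\BSL_n^c$. What remains is $(i-1)\bar{c}_{i+1}$, which gives $\bar{c}_{i+1}$ for $i$ even and $0$ for $i$ odd. For the top class $\bar{c}_n$, the same formula on $\BGL_n$ reads $\Sq^2(\bar{c}_n)=\bar{c}_1\bar{c}_n+(n-1)\bar{c}_{n+1}$; the second term vanishes on $\BGL_n$ already (there is no $\bar{c}_{n+1}$) and the first dies after pullback to $\BSL_n^c$. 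As a consistency check, pulling forward along $\iota$ and comparing with the formula $\Sq^2(\bar{c}_{2i})=\bar{c}_{2i+1}$ on $\Ch^\bullet(\BSL_n)$ is compatible with the above.

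Finally, for the twisted Steenrod square, I would invoke the general identity $\Sq^2_\mathcal{L}(x)=c_1(\mathcal{L})\cdot x+\Sq^2(x)$ relating twisted and untwisted operations by the first Chern class of the twisting line bundle, already in use in the preceding paragraphs. With $\mathcal{L}=\Theta$ and $c_1(\Theta)=\theta$, this yields the stated formula $\Sq^2_\Theta(x)=\theta\cdot x+\Sq^2(x)$. No step poses a serious obstacle; the proof is essentially the organization of the observations already made in the discussion preceding the proposition, and the only technicality to watch is that the Wu formula on $\Ch^\bullet(\BGL_n)$ pulls back cleanly to $\BSL_n^c$ thanks to the vanishing of $\bar{c}_1$ mod $2$.
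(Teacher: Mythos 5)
Your proof is correct and follows the same approach as the paper: pull back $\Sq^2(\bar{c}_j)=\bar{c}_1\bar{c}_j+(j-1)\bar{c}_{j+1}$ from $\BGL_n$, use $\bar{c}_1=0$ in $\Ch^\bullet(\BSL_n^c)$ to simplify, pull back $\Sq^2(t)=t^2$ from $\P^\infty$ to get $\Sq^2(\theta)=\theta^2$, and invoke the standard relation between twisted and untwisted Steenrod squares. Your treatment of $\bar{c}_n$ is slightly more explicit than the paper's appeal to degree considerations, but the argument is the same.
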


\subsection{Motivic cohomology}

As an aside, we explain a variant of the Chow-ring computation in \autoref{cor:chow-product} for motivic cohomology. 

\begin{proposition}\label{prop:motivic-cohomology} 
  The motivic cohomology of $\BSL_n^c$ is described as follows:
  \[
  H^\bullet_{\rm mot}(\BSL_n^c,\Z(\bullet))\cong H^\bullet_{\rm mot}(k,\Z(\bullet))[c_1,\dots,c_n,\theta]/(c_1-2\theta)
  \]
  The bidegrees of the generators are $|c_i|=(2i,i)$ and $|\theta|=(2,1)$.
\end{proposition}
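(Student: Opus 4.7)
The plan is to mirror the Chow-ring computation from \autoref{cor:chow-product}, now in the bigraded setting of motivic cohomology. The same presentation of $\BSL_n^c$ as the complement of the zero section of the line bundle $f^\ast\O(-1) \cong \det\boxtimes\O(2)$ on $\BGL_n \times \P^\infty$ (from \autoref{subsec:ind-variety}) feeds into a Gysin sequence in motivic cohomology, so the two ingredients I need are a computation of $H^{*,*}_{\rm mot}(\BGL_n \times \P^\infty)$ and the fact that the connecting map has the same Chern-class description as before.

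First I would compute the motivic cohomology of $\BGL_n \times \P^\infty$ via Totaro-style approximation by the smooth finite-dimensional models $\Gr(n,m) \times \P^N$. Both factors are cellular with motivic cohomology free as modules over $H^{*,*}_{\rm mot}(k)$, generated by the Chern classes of the tautological bundles, so a motivic K\"unneth formula applies and, passing to the colimit, gives
\[
  H^{*,*}_{\rm mot}(\BGL_n \times \P^\infty) \cong H^{*,*}_{\rm mot}(k)[c_1,\dots,c_n,\theta]
\]
with $|c_i|=(2i,i)$ and $|\theta|=(2,1)$.

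Next, I would apply the Gysin sequence for the codimension-one zero section inclusion of $\BGL_n \times \P^\infty$ inside the total space of $f^\ast\O(-1)$, whose complement is $\BSL_n^c$. This produces a long exact sequence whose connecting map is multiplication by $c_1(f^\ast\O(-1))$ with bidegree shift $(2,1)$. The Chern class calculation already carried out in the Chow setting transports verbatim and gives $c_1(f^\ast\O(-1)) = c_1 - 2\theta$. Since this element is linear in the polynomial generators it is a non-zero-divisor in $H^{*,*}_{\rm mot}(k)[c_1,\dots,c_n,\theta]$, so multiplication by it is injective, the long exact sequence collapses to a short exact sequence, and the quotient yields the claimed presentation.

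The main technical point I expect is the motivic K\"unneth formula over the non-trivial coefficient ring $H^{*,*}_{\rm mot}(k)$, which is more subtle than its Chow-group counterpart in \autoref{thm:chow-kunneth}; for cellular smooth projective varieties such as $\Gr(n,m)$ and $\P^N$, however, it follows from the Schubert-type cell decomposition, which splits the motivic cohomology freely over $H^{*,*}_{\rm mot}(k)$ and makes the argument go through uniformly in the approximation step.
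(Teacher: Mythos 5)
Your proposal is correct and follows the same overall scaffold as the paper: realize $\BSL_n^c$ as the complement of the zero section of $f^\ast\O(-1)\cong\det\boxtimes\O(2)$ over $\BGL_n\times\P^\infty$, run the localization (Gysin) sequence in motivic cohomology, compute $c_1(f^\ast\O(-1))=c_1-2\theta$, and use that multiplication by this class is injective to split the sequence. The one place you diverge is in computing $H^{\bullet,\bullet}_{\rm mot}(\BGL_n\times\P^\infty)$: the paper deduces the polynomial description directly from the projective bundle formula (viewing $\BGL_n\times\P^N$ as the projectivization of a trivial bundle over $\BGL_n$ and passing to the colimit), whereas you invoke Totaro-style finite-dimensional approximation combined with a motivic K\"unneth theorem for cellular varieties. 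Both work; the projective bundle formula route is somewhat lighter, since it is a basic structural result that does not require establishing a general K\"unneth isomorphism with coefficients in $H^{\bullet,\bullet}_{\rm mot}(k)$. Your observation that $c_1-2\theta$ is monic in $c_1$ and hence a non-zero-divisor is a clean justification of the injectivity the paper simply asserts.
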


\begin{proof}
  As in the arguments for \autoref{cor:chow-product}, we use the description of $\BSL_n^c$ as complement of the zero section of a line bundle over $\BGL_n\times\mathbb{P}^\infty$, and the associated localization sequence in motivic cohomology:
  \begin{align*}
    \cdots\to H^p(\BGL_n\times\mathbb{P}^\infty,\Z(q))\xrightarrow{c_1(f^\ast\mathcal{O}(-1))} &H^{p+2}(\BGL_n\times\mathbb{P}^\infty,\Z(q+1))\to \\
    \to&H^{p+2}(\BSL_n^c,\Z(q+1)) \xrightarrow{\partial} \cdots
  \end{align*}
  Using the projective bundle formula, we find
  \[
  H^\bullet(\BGL_n\times\mathbb{P}^\infty,\Z(\bullet))\cong H^\bullet_{\rm mot}(k,\Z(\bullet))[c_1,\dots,c_n,\theta].
  \]
  As before, the first Chern class of $f^\ast\mathcal{O}(-1)$ is $c_1(f^\ast\mathcal{O}(-1))=c_1-2\theta$, in particular, multiplication with this class is injective on motivic cohomology of $\BGL_n\times\mathbb{P}^\infty$. Consequently, the localization sequence splits up into short exact sequences showing that $H^\bullet_{\rm mot}(\BSL_n^c,\Z(\bullet))$ is the cokernel of $c_1-2\theta$ on $H^\bullet_{\rm mot}(\BGL_n,\Z(\bullet))$ as claimed.  
\end{proof}

\section{Chow--Witt groups of \texorpdfstring{$\BSL_n^c$}{BSLnc}}\label{sec:chow-witt}

In this section we compute the $\mathbf{I}^j$-cohomology of $\BSL_n^c$. Together with the results of \autoref{sec:oriented-theories}, this allows us to compute the Chow--Witt groups of $\BSL_n^c$.

\subsection{The Witt-sheaf cohomology of \texorpdfstring{$\BSL_n^c$}{BSLnc}}
\label{sec:witt-sheaf}
As a first step, we want to compute Witt-sheaf cohomology, again using the pullback square description of $\BSL_n^c$.

\begin{proposition}\label{prop:witt-cohomology-bgln} \cite[Proposition~4.5]{Wendt-Gr} The Witt-sheaf cohomology of $\BGL_n$ is given as a ${\rm W}(k)$-algebra by
\begin{align*}
    H^\bullet(\BGL_n, \mathbf{W}) &= \begin{cases}
        {\rm W}(k)[p_2, \ldots, p_{n-1}] & n\equiv 1 \pmod{2} \\
        {\rm W}(k)[p_2, \ldots, p_{n-2},e_n^2] & n\equiv 0 \pmod{2}
        \end{cases} \\
    H^\bullet(\BGL_n, \mathbf{W}(\O_{\BGL_n}(-1))) &= 
    \begin{cases}
        0 & n\equiv 1\pmod{2} \\
        H^\bullet(\BGL_n,\mathbf{W})[e_n] & n\equiv 0 \pmod{2}
    \end{cases}
\end{align*}
Here the generators are Pontryagin classes $p_{2i}$ of degree $4i$ and a potential Euler class $e_n$ in degree $n$.
Concisely we can phrase this as
\begin{align*}
    H^\bullet(\BGL_n, \mathbf{W} \oplus \mathbf{W}(-1)) \cong \begin{cases} {\rm W}(k)[p_2,p_4, \ldots, p_{n-2},e_n] & n\text{ even} \\ {\rm W}(k)[p_2,p_4, \ldots, p_{n-1}] & n\text{ odd}. \end{cases}
\end{align*}
\end{proposition}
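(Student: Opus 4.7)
My plan is to compute these cohomology rings by induction on $n$, using the projective bundle formula for Witt-sheaf cohomology applied to the projectivized tautological bundle.

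For the base case $n=1$, we have $\BGL_1\simeq\P^\infty$. A direct computation via the limit of the localization sequences for $\P^{n-1}\hookto\P^n$ gives $H^\bullet(\P^\infty,\mathbf{W})\cong W(k)$ concentrated in degree $0$ and $H^\bullet(\P^\infty,\mathbf{W}(\mathcal{O}(-1)))=0$ (since Witt cohomology of a single point is $W(k)$ and the twisted cohomology of $\P^{2n+1}$ contributes nothing stably as $n\to\infty$). This matches the claimed answer.

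For the inductive step, I would play off two descriptions of the projectivization $\pi\colon\P(\gamma_n)\to\BGL_n$ of the tautological bundle. On the one hand, the fiber sequence $\P^{n-1}\to\P(\gamma_n)\to\BGL_n$ admits a projective bundle formula in Witt-sheaf cohomology, expressing $H^\bullet(\P(\gamma_n),\mathbf{W})$ (in all relevant twists) as a module over $H^\bullet(\BGL_n,\mathbf{W}\oplus\mathbf{W}(-1))$ generated by powers of the Euler class of $\mathcal{O}_{\P(\gamma_n)}(-1)$, with relations controlled by Pontryagin/Euler data of $\gamma_n$. On the other hand, $\P(\gamma_n)$ is the classifying space of the parabolic $P_{1,n-1}\subset\GL_n$ stabilizing a line, which is $\A^1$-equivalent to the classifying space of its Levi $\GL_1\times\GL_{n-1}$. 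The K\"unneth isomorphism (\autoref{ex:witt-kunneth-grassmannians}) together with the base case and the inductive hypothesis for $n-1$ determine $H^\bullet(\P(\gamma_n),\mathbf{W})$ completely as a $W(k)$-algebra. Matching the two descriptions lets us read off both the generators of $H^\bullet(\BGL_n,\mathbf{W}\oplus\mathbf{W}(-1))$ and the absence of relations among them: the even Pontryagin classes $p_{2i}$ emerge as the even-rank characteristic classes forced by the projective bundle relation, and the Euler class $e_n$ (for even $n$) emerges as the top nontrivial class in the $\det$-twisted cohomology. Freeness over $W(k)$ and the polynomial ring structure are then forced by the exactness in the long exact Gysin sequences underlying the projective bundle formula.

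\emph{Main obstacle.} The principal technical difficulty is the projective bundle formula itself for Witt-sheaf cohomology. Unlike the Chow-theoretic version, the formula is parity-sensitive: the Witt cohomology of $\P^{n-1}$ depends on the parity of $n-1$ and comes with twisted contributions, so the pushforward $\pi_\ast$ produces different behavior on $\mathbf{W}$ and $\mathbf{W}(\mathcal{O}(-1))$ sectors. One must therefore track the $\det\gamma_n$-twist carefully through the induction, both in matching the Kunneth side with the projective-bundle side and in verifying that the relations obtained are exactly the claimed polynomial ones (no unexpected extra relations appear). A secondary subtlety is that the Pontryagin classes constructed via this procedure must be shown to agree with the classes arising from real realization or from squaring Chern classes modulo $\eta$; this compatibility is what makes the odd/even case split clean.
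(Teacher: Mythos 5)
The paper itself does not prove this statement; it cites \cite[Proposition~4.5]{Wendt-Gr}, whose argument is the Gysin/localization sequence for the tautological bundle $\gamma_n\to\BGL_n$: the complement of the zero section is $\A^1$-equivalent to $\BGL_{n-1}$, so one gets a long exact sequence relating $H^\bullet(\BGL_n,\mathbf{W}(\star))$ and $H^\bullet(\BGL_{n-1},\mathbf{W}(\star))$ in which the connecting map is multiplication by $e(\gamma_n)$; the parity dichotomy then comes from whether that Euler class vanishes. Your plan --- induct via the projectivization $\P(\gamma_n)\simeq{\rm B}(\GL_1\times\GL_{n-1})$, playing a projective bundle formula against the K\"unneth decomposition --- is a genuinely different route, and it is the one used in classical topology for $\BU(n)$ with Chern classes. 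However, as written it has gaps that go beyond "technical difficulty to be filled in later."

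The central problem is your description of the projective bundle formula. You assert that $H^\bullet(\P(\gamma_n),\mathbf{W})$ is a module over $H^\bullet(\BGL_n,\mathbf{W}\oplus\mathbf{W}(-1))$ generated by powers of $e(\mathcal{O}_{\P(\gamma_n)}(-1))$. That is the $\GL$-oriented picture. Witt-sheaf cohomology is only $\SL^c$-oriented, and the correct statement (Ananyevskiy's projective bundle theorem for $\eta$-inverted theories) is a parity-sensitive rank-two decomposition, not a free rank-$n$ module on powers of the first Chern class of the tautological line bundle --- indeed, the fiber $\P^{n-1}$ has Witt cohomology of total rank $\le 2$ over ${\rm W}(k)$, not $n$. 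So the engine of your induction is mis-stated, and the "matching" step is not set up correctly. Moreover, in the present paper's logical order, invoking \autoref{ex:witt-kunneth-grassmannians} is circular: its proof here cites precisely the Wendt--Gr computation to obtain the freeness hypothesis required by the HMW K\"unneth theorem. This can be repaired by applying HMW directly with freeness supplied by your inductive hypothesis, but you would need to say so. Finally, establishing a usable projective bundle formula for $\mathbf{W}$ essentially requires the same Gysin-sequence machinery and vanishing-of-odd-Euler-classes input that Wendt--Gr uses to prove the statement directly, so the detour through $\P(\gamma_n)$ does not buy you a shortcut here. If you want to pursue this route, you should begin by stating the precise form of Ananyevskiy's projective bundle theorem (with the twisted summands made explicit) and verify that the two sides can actually be matched; as it stands, the proposal relies on a formula that does not hold in this setting.
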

As an example when $n=1$, the Witt-sheaf cohomology groups of ${\rm B}\mathbb{G}_{\rm m} = \P^\infty$ are given by
\begin{align*}
    H^\bullet(\P^\infty,\mathbf{W}) &= {\rm W}(k) \\
    H^\bullet(\P^\infty, \mathbf{W} (\O_{\P^\infty}(-1))) &= 0,
\end{align*}
i.e., the Witt-sheaf cohomology of $\P^\infty$ is concentrated in degree 0.

Via \autoref{ex:witt-kunneth-grassmannians} we obtain the following computation:
\begin{equation}\label{eqn:witt-cohom-product}
\begin{aligned}
    H^\bullet \left( \BGL_n \times \P^\infty, \mathbf{W} \right)&\cong H^\bullet(\BGL_n,\mathbf{W}) \\
    H^\bullet \left( \BGL_n \times \P^\infty, \mathbf{W}(\O_{\BGL_n}(-1)) \right)&\cong H^\bullet \left( \BGL_n, \mathbf{W}(\O_{\BGL_n}(-1)) \right) \\
    H^\bullet \left( \BGL_n \times \P^\infty, \mathbf{W}(\O_{\P^\infty}(-1)) \right)&\cong 0 \\
    H^\bullet \left( \BGL_n \times \P^\infty, \mathbf{W}(\O_{\P^\infty}(-1) \sqtimes \O_{\BGL_n}(-1)) \right) &\cong 0.
\end{aligned}
\end{equation}
We combine this computation and the localization sequence to compute $H^\bullet(\BSL_n^c, \mathbf{W})$, and its twisted versions. Note that for
\begin{align*}
    f := \det(-) \otimes (-)^{-2} \colon \BGL_n \times \P^\infty \to \P^\infty,
\end{align*}
we have that
\begin{align*}
    f^\ast \O_{\P^\infty}(-1) = \O_{\BGL_n}(-1) \sqtimes \O_{\P^\infty}(2).
\end{align*}
On our localization sequence, we are cupping with the Euler class of the normal bundle of the zero section of $f^\ast \O_{\P^\infty}(-1)$, which is just the bundle itself. However this bundle is of odd rank, hence its Euler class is hyperbolic, and vanishes in Witt cohomology by \cite{Levine-aspects} or the computation of Witt-sheaf cohomology in \autoref{prop:witt-cohomology-bgln} above. Hence for any line bundle $\mathcal{L} \to \BGL_n \times \P^\infty$, the localization sequence in Witt sheaf cohomology splits into short exact sequences of the form
\begin{align*}
    0 \to H^j \left( \BGL_n \times \P^\infty, \mathbf{W}(\mathcal{L}) \right) \to H^j \left( \BSL_n^c, \mathbf{W}( \left. \mathcal{L} \right|_{ \BSL_n^c }) \right) \\
    \to H^j \left( \BGL_n \times \P^\infty, \mathbf{W} (\mathcal{L} \otimes f^\ast \O(-1)) \right) \to 0.
\end{align*}
The last term is free as a $W(k)$-module by \autoref{eqn:witt-cohom-product}, hence these sequences split.

\begin{proposition}\label{prop:jth-witt-cohom-bslnc} For any line bundle $\mathcal{L} \to \BGL_n \times \P^\infty$, we obtain an isomorphism of ${\rm W}(k)$-modules, where on the left-hand side we notationally simplify $\mathcal{L}|_{\BSL_n^c}$ to $\mathcal{L}$:
\begin{align*}
  H^j \left( \BSL_n^c, \mathbf{W}\left(\mathcal{L}
  \right) \right) \cong  H^j \left( \BGL_n \times \P^\infty, \mathbf{W}(\mathcal{L}) \right) \oplus H^j \left( \BGL_n \times \P^\infty, \mathbf{W}(\mathcal{L} \otimes f^\ast \O(-1)) \right).
\end{align*}
\end{proposition}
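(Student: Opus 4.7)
The plan is to invoke the localization sequence coming from the description (given in \autoref{subsec:ind-variety}) of $\BSL_n^c$ as the open complement of the zero section of the line bundle $f^\ast\mathcal{O}(-1)$ over $\BGL_n\times\mathbb{P}^\infty$. Since the zero section has codimension one, purity rewrites cohomology with supports, after a degree shift, as Witt-sheaf cohomology of $\BGL_n\times\mathbb{P}^\infty$ twisted by $\mathcal{L}\otimes (f^\ast\mathcal{O}(-1))^{-1}$. Because $\mathbf{W}$-cohomology only depends on twists modulo squares of line bundles, this twist coincides with $\mathcal{L}\otimes f^\ast\mathcal{O}(-1)$. The resulting localization sequence reads
\[
\cdots \to H^{j-1}\bigl(\BGL_n\times\mathbb{P}^\infty,\mathbf{W}(\mathcal{L}\otimes f^\ast\mathcal{O}(-1))\bigr) \xto{\cup e} H^j\bigl(\BGL_n\times\mathbb{P}^\infty,\mathbf{W}(\mathcal{L})\bigr) \to H^j(\BSL_n^c,\mathbf{W}(\mathcal{L})) \to \cdots,
\]
and by \autoref{ex:i-star-zero-section} the Gysin pushforward identifies with cup product against the Euler class $e=e(f^\ast\mathcal{O}(-1))$.

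The central step is to verify that this Euler class is zero in Witt-sheaf cohomology. Since $f^\ast\mathcal{O}(-1)$ is a line bundle, its Euler class lives in $H^1(\BGL_n\times\mathbb{P}^\infty,\mathbf{W}(f^\ast\mathcal{O}(-1)))$. Using the K\"unneth decomposition of \autoref{eqn:witt-cohom-product} together with the vanishing statements collected in \autoref{prop:witt-cohomology-bgln}, this group is already zero; alternatively it is an instance of the general vanishing of odd-rank Euler classes in Witt-sheaf cohomology, as in \cite{Levine-aspects}. Consequently the Gysin map in the localization sequence is trivial in every degree, and the long exact sequence degenerates into the four-term short exact sequences
\[
0 \to H^j\bigl(\BGL_n\times\mathbb{P}^\infty,\mathbf{W}(\mathcal{L})\bigr) \to H^j(\BSL_n^c,\mathbf{W}(\mathcal{L})) \to H^j\bigl(\BGL_n\times\mathbb{P}^\infty,\mathbf{W}(\mathcal{L}\otimes f^\ast\mathcal{O}(-1))\bigr) \to 0
\]
displayed in the paragraph preceding the statement.

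Finally, I would establish that these short exact sequences split as ${\rm W}(k)$-modules. The K\"unneth computations of \autoref{eqn:witt-cohom-product}, combined with the explicit polynomial descriptions of the Witt-sheaf cohomology rings of $\BGL_n$ in \autoref{prop:witt-cohomology-bgln}, show that in every relevant twist $\mathcal{M}$ the ${\rm W}(k)$-module $H^j(\BGL_n\times\mathbb{P}^\infty,\mathbf{W}(\mathcal{M}))$ is free. Freeness of the rightmost term gives a splitting of the short exact sequence, yielding the claimed direct sum decomposition. The one point requiring care is the vanishing of the Euler class of $f^\ast\mathcal{O}(-1)$; once this reduces, via the K\"unneth formula, to a group of the form $H^1(\BGL_n,\mathbf{W}(\mathcal{O}_{\BGL_n}(-1)))\oplus H^1(\mathbb{P}^\infty,\mathbf{W}(\mathcal{O}(-1)))$ already known to vanish, the remainder is routine.
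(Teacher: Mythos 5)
Your proposal follows the same route as the paper: write $\BSL_n^c$ as the complement of the zero section of $f^\ast\mathcal{O}(-1)$ over $\BGL_n\times\P^\infty$, use the localization sequence for Witt-sheaf cohomology, observe that the Gysin map is cup product with the Euler class of the odd-rank (line) bundle $f^\ast\mathcal{O}(-1)$ which vanishes in $\mathbf{W}$-cohomology, and split the resulting short exact sequences using freeness of the outer terms established in \autoref{eqn:witt-cohom-product}. One small slip in your closing remark: modulo squares the twist is $\mathcal{O}_{\BGL_n}(-1)\boxtimes\mathcal{O}_{\P^\infty}$ (the $\P^\infty$ factor gets squared away), and K\"unneth gives a tensor product whose degree-$1$ piece is $H^1(\BGL_n,\mathbf{W}(\mathcal{O}_{\BGL_n}(-1)))\otimes_{{\rm W}(k)} H^0(\P^\infty,\mathbf{W})$, not a direct sum with a twisted $H^1(\P^\infty)$ term; this does not affect the vanishing or the argument.
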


Now in order to compute the Witt sheaf cohomology of $\BSL_n^c$, it suffices to consider the two line bundles we care about from $\Pic(\BSL_n^c)/2$, namely the trivial one and $\O_{\P^\infty}(-1)=\Theta$.

\begin{proposition}\label{prop:wk-module-cohom-bslnc} As ${\rm W}(k)$-modules, the Witt sheaf cohomology of $\BSL_n^c$ is given by 
\begin{align*}
    H^\bullet \left( \BSL_n^c, \mathbf{W} \right) &\cong  H^\bullet\left(\BGL_n,\mathbf{W} \oplus \mathbf{W}(-1) \right) \\
    H^\bullet \left( \BSL_n^c, \mathbf{W}(\O_{\P^\infty}(-1)) \right) &\cong 0.
\end{align*}
\end{proposition}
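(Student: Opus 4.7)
The plan is to apply \autoref{prop:jth-witt-cohom-bslnc} to each of the two relevant twists $\mathcal{L}\in\{\O,\O_{\P^\infty}(-1)\}$ on $\BSL_n^c$, identify the resulting line bundles on $\BGL_n\times\P^\infty$ up to squares (which is all that matters for Witt-sheaf cohomology, since it only sees $\Pic/2$), and then read off each summand from the Künneth decomposition \eqref{eqn:witt-cohom-product}. The key bookkeeping input is the identification $f^\ast\O_{\P^\infty}(-1)=\O_{\BGL_n}(-1)\sqtimes\O_{\P^\infty}(2)$, which was already used in the construction of $\BSL_n^c$ as the complement of the zero section.

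For the untwisted case $\mathcal{L}=\O$, I would read off from \autoref{prop:jth-witt-cohom-bslnc} the splitting
\[
H^j(\BSL_n^c,\mathbf{W})\cong H^j(\BGL_n\times\P^\infty,\mathbf{W})\oplus H^j(\BGL_n\times\P^\infty,\mathbf{W}(f^\ast\O(-1))).
\]
Since $\O_{\P^\infty}(2)$ is a square, the twist $f^\ast\O(-1)$ is equivalent modulo squares to $\O_{\BGL_n}(-1)\sqtimes\O_{\P^\infty}$. Substituting in the first two lines of \eqref{eqn:witt-cohom-product} yields $H^j(\BGL_n,\mathbf{W})$ and $H^j(\BGL_n,\mathbf{W}(-1))$ respectively, which combine to give the claimed description.

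For the twisted case $\mathcal{L}=\O_{\P^\infty}(-1)$, I would check that both summands of \autoref{prop:jth-witt-cohom-bslnc} vanish. The first, $H^\bullet(\BGL_n\times\P^\infty,\mathbf{W}(\O_{\P^\infty}(-1)))$, vanishes directly by the third line of \eqref{eqn:witt-cohom-product}. The second involves $\O_{\P^\infty}(-1)\otimes f^\ast\O(-1)=\O_{\BGL_n}(-1)\sqtimes\O_{\P^\infty}(1)$, and since $\O_{\P^\infty}(1)$ agrees with $\O_{\P^\infty}(-1)$ modulo squares, this reduces to $H^\bullet(\BGL_n\times\P^\infty,\mathbf{W}(\O_{\BGL_n}(-1)\sqtimes\O_{\P^\infty}(-1)))$, which vanishes by the fourth line of \eqref{eqn:witt-cohom-product}.

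There is no substantive obstacle: the proposition is essentially a formal consequence of the splitting \autoref{prop:jth-witt-cohom-bslnc}, the Künneth decomposition \eqref{eqn:witt-cohom-product}, and the fact that Witt-sheaf cohomology is insensitive to twists by squares of line bundles. The only point requiring any care is the explicit identification of the twist $f^\ast\O(-1)$ modulo squares, which handles both cases in parallel.
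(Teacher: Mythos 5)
Your proof is correct and follows the paper's argument essentially verbatim: apply \autoref{prop:jth-witt-cohom-bslnc} to each of the two twists, simplify the resulting line bundles on $\BGL_n\times\P^\infty$ modulo squares using $f^\ast\O_{\P^\infty}(-1)=\O_{\BGL_n}(-1)\sqtimes\O_{\P^\infty}(2)$, and read off the answers from the K\"unneth computation \eqref{eqn:witt-cohom-product}. The only cosmetic difference is that you spell out the intermediate twist $\O_{\P^\infty}(1)\equiv\O_{\P^\infty}(-1)$ modulo squares, whereas the paper writes the simplified twist directly.
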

\begin{proof} For the untwisted cohomology, using \autoref{prop:jth-witt-cohom-bslnc} we get
\begin{align*}
    H^\bullet \left( \BSL_n^c, \mathbf{W} \right)&\cong H^\bullet(\BGL_n \times \P^\infty, \mathbf{W}) \oplus H^\bullet \left( \BGL_n \times \P^\infty, \mathbf{W}(\O_{\BGL_n}(-1) \sqtimes \O_{\P^\infty}(2) \right) \\
    &\cong H^\bullet(\BGL_n \times \P^\infty, \mathbf{W}) \oplus H^\bullet \left( \BGL_n \times \P^\infty, \mathbf{W}(\O_{\BGL_n}(-1) \right) \\
    &\cong H^\bullet(\BGL_n,\mathbf{W}) \oplus H^\bullet \left( \BGL_n, \mathbf{W}(\O_{\BGL_n}(-1)) \right).
\end{align*}
For the twisted computation, again by \autoref{prop:jth-witt-cohom-bslnc} we get
\begin{align*}
    &H^\bullet \left( \BSL_n^c, \mathbf{W}(\O_{\P^\infty}(-1)) \right) \\
    &\cong  H^\bullet(\BGL_n \times \P^\infty, \mathbf{W}(\O_{\P^\infty}(-1))) \oplus H^\bullet(\BGL_n \times \P^\infty, \mathbf{W}(\O_{\BGL_n}(-1) \sqtimes \O_{\P^\infty}(-1))),
\end{align*}
which vanishes by \autoref{eqn:witt-cohom-product}.
\end{proof}

We can further describe the ring structure on $H^\bullet \left( \BSL_n^c, \mathbf{W} \right)$. Note that pullback along the map $\BSL_n^c \to \BGL_n$ induces a ring homomorphism
\begin{equation}\label{eqn:ringhom-bgln-bsln}
\begin{aligned}
    H^\bullet \left( \BGL_n, \mathbf{W} \right) \to H^\bullet(\BSL_n^c, \mathbf{W}),
\end{aligned}
\end{equation}
exhibiting $H^\bullet(\BSL_n^c,\mathbf{W})$ as an algebra over $H^\bullet \left( \BGL_n, \mathbf{W} \right)$. From this it is clear that the isomorphism in \autoref{prop:jth-witt-cohom-bslnc} is an isomorphism of $H^\bullet(\BGL_n,\mathbf{W})$-modules. By \autoref{prop:witt-cohomology-bgln}, the twisted Witt-sheaf cohomology $H^\bullet(\BGL_n,\mathbf{W}(-1))$ is a free rank 1 module over the untwisted cohomology ring $H^\bullet(\BGL_n,\mathbf{W})$, generated by the Euler class. Thus, as an $H^\bullet(\BGL_n,\mathbf{W})$-module, $H^\bullet \left( \BSL_n^c, \mathbf{W} \right)$ is a free module of rank two, generated by $1$ and the Euler class $e_n$. In order to describe the ring structure it suffices to understand what happens when the Euler class on $H^\bullet \left( \BSL_n^c, \mathbf{W} \right)$ is squared. Since \autoref{eqn:ringhom-bgln-bsln} is a ring homomorphism, this can be determined by squaring the Euler class on the cohomology of $\BGL_n$ with all twists considered. This allows us to conclude the following:

\begin{proposition}\label{prop:ring-iso-witt-cohom-bslnc} There is a ring isomorphism
\begin{align*}
    H^\bullet \left( \BSL_n^c,\mathbf{W} \right)\cong H^\bullet(\BGL_n, \mathbf{W} \oplus \mathbf{W}(-1)).
\end{align*}
\end{proposition}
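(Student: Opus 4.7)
The plan is to promote the module isomorphism of \autoref{prop:wk-module-cohom-bslnc} to a ring isomorphism by pinning down the single nontrivial product, namely the self-product of the Euler class. The argument splits according to the parity of $n$.

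For odd $n$, \autoref{prop:witt-cohomology-bgln} gives $H^\bullet(\BGL_n,\mathbf{W}(-1))=0$, so $H^\bullet(\BGL_n,\mathbf{W}\oplus\mathbf{W}(-1))$ reduces to $H^\bullet(\BGL_n,\mathbf{W})$, and the pullback \autoref{eqn:ringhom-bgln-bsln} is already the desired ring isomorphism. For even $n$, \autoref{prop:wk-module-cohom-bslnc} exhibits $H^\bullet(\BSL_n^c,\mathbf{W})$ as an $H^\bullet(\BGL_n,\mathbf{W})$-module that is free of rank two with basis $1$ and the Euler class $e_n$. On $\BGL_n$ this class lives in the twisted cohomology $H^\bullet(\BGL_n,\mathbf{W}(-1))$, but upon pulling back to $\BSL_n^c$ the twist by the determinant becomes a square and so $e_n$ now defines a class in untwisted Witt-sheaf cohomology. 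Since \autoref{eqn:ringhom-bgln-bsln} is a ring map, the entire multiplicative structure on $H^\bullet(\BSL_n^c,\mathbf{W})$ is determined once we compute the single product $e_n\cdot e_n$.

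To compute this product, I would work upstairs on $\BGL_n$ and exploit the natural cup-product pairing $\mathbf{W}(-1)\otimes\mathbf{W}(-1)\to\mathbf{W}(-2)\cong\mathbf{W}$, where the last identification uses insensitivity of Witt-sheaf cohomology to squares of line bundles. This pairing sends $e_n\otimes e_n$ to the polynomial generator $e_n^2\in H^\bullet(\BGL_n,\mathbf{W})$ recorded in \autoref{prop:witt-cohomology-bgln}. Pulling back along $\BSL_n^c\to\BGL_n$ then forces $H^\bullet(\BSL_n^c,\mathbf{W})$ to admit the presentation ${\rm W}(k)[p_2,p_4,\ldots,p_{n-2},e_n]$, which is precisely the natural ring structure on $H^\bullet(\BGL_n,\mathbf{W}\oplus\mathbf{W}(-1))$ given by \autoref{prop:witt-cohomology-bgln}. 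This completes the identification.

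The main obstacle is checking that the module-level splitting of \autoref{prop:jth-witt-cohom-bslnc} is compatible with the multiplicative structure: concretely, that the product of two classes arising from the twisted summand in $H^\bullet(\BSL_n^c,\mathbf{W})$ is indeed computed by the external multiplication pairing on twisted cohomology at the $\BGL_n\times\mathbb{P}^\infty$-level of the localization sequence. This requires checking naturality of the localization splitting with respect to cup products, after which the computation above pins down $e_n^2$ and the ring isomorphism follows.
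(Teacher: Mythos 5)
Your proof is essentially the paper's argument: promote the ${\rm W}(k)$-module isomorphism to a ring isomorphism by observing that $H^\bullet(\BSL_n^c,\mathbf{W})$ is a free rank-two $H^\bullet(\BGL_n,\mathbf{W})$-algebra generated by $1$ and $e_n$, so the ring structure is pinned down by $e_n^2$, which is then computed upstairs on $\BGL_n$ via the cup-product pairing $\mathbf{W}(-1)\otimes\mathbf{W}(-1)\to\mathbf{W}$ and pulled back along the ring homomorphism \autoref{eqn:ringhom-bgln-bsln}. The compatibility concern you flag at the end — that the localization splitting should respect products and that the pullback of $e_n$ really generates the second summand — is also left implicit in the paper's one-paragraph proof; one clean way to close it is to note that the further pullback to $H^\bullet(\BSL_n,\mathbf{W})$ is already an isomorphism by Ananyevskiy's computation, giving injectivity of $f^\ast$, after which the graded rank count from \autoref{prop:wk-module-cohom-bslnc} forces surjectivity.
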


We remark that this is precisely equal to the untwisted Witt sheaf cohomology of $\BSL_n$ by \cite{Wendt-Gr}. Indeed there is a natural map $\BSL_n \to \BSL_n^c$ arising from the pullback diagram in \autoref{cor:bslnc}, and it is straightforward to see that this exhibits a ring isomorphism
\begin{equation}\label{eqn:same-witt-cohom-bsln-bslnc}
\begin{aligned}
    H^\bullet(\BSL_n^c, \mathbf{W}) \xto{\sim} H^\bullet(\BSL_n,\mathbf{W}).
\end{aligned}
\end{equation}
Here we can compute $H^\bullet(\BSL_n,\mathbf{W})$ using, for example, work of Ananyevskiy \cite[Theorem~10]{Ananyevskiy-projective-bundle}.

\subsection{The \texorpdfstring{$\mathbf{I}^j$}{Ij}-cohomology of \texorpdfstring{$\BSL_n^c$}{BSLnc}}

In this section, we will describe a presentation for ${\bf I}$-cohomology of $\BSL_n^c$ as ${\rm W}(k)$-algebra. We briefly give an overview of the form our results take. Essentially, ${\bf I}$-cohomology is a direct sum of Witt-sheaf cohomology and the image of Bockstein maps
\[
\beta_{\mathcal{L}}\colon\Ch^q(\BSL_n^c)\to H^{q+1}(\BSL_n^c,{\bf I}^{q+1}(\mathcal{L})).
\]
The multiplication of Bockstein classes can be determined by reduction to $\Ch^\bullet(\BSL_n^c)$, and the formulas -- up to some subtleties involving the class $\theta$ -- largely agree with the ones for Bockstein classes on $\BGL_n$, cf. \cites{Cadek,Wendt-Gr}. Note in particular that, while we have seen in \autoref{sec:witt-sheaf} that the twisted Witt-sheaf cohomology of $\BSL_n^c$ is trivial, there are nontrivial twisted Bockstein classes. The most fundamental of these is $\beta_{\Theta}(1)$, which is the Euler class of the square-root bundle providing the quadratic orientation of the universal bundle over $\BSL_n^c$.

To establish these results, observe first that since $H^{j+1}(\BSL_n^c, \mathbf{W})$ is a free ${\rm W}(k)$-module by \autoref{prop:ring-iso-witt-cohom-bslnc} above, we get a splitting of the four-term exact sequence from~\autoref{eqn:four-term-Bar}:
\begin{align*}
    \Ch^j \left( \BSL_n^c \right) \xto{\beta_j} H^{j+1} \left( \BSL_n^c, \mathbf{I}^{j+1} \right) \to H^{j+1} \left( \BSL_n^c, \mathbf{W} \right) \to 0.
\end{align*}
Hence the ${\bf I}^j$-cohomology of $\BSL_n^c$ is given, as ${\rm W}(k)$-module, as a direct sum of its Witt-sheaf cohomology plus the image of the Bockstein homomorphism:
\begin{align}
  \label{eq:splitting}
    H^{j+1} \left( \BSL_n^c, \mathbf{I}^{j+1} \right) &\cong \im(\beta_j) \oplus H^{j+1} \left( \BSL_n^c, \mathbf{W} \right). 
\end{align}
The same statement is true for twisted ${\bf I}$-cohomology. In this case, since twisted Witt-sheaf cohomology vanishes by \autoref{prop:wk-module-cohom-bslnc}, we simply have
\begin{align*}
    H^{j+1} \left( \BSL_n^c, \mathbf{I}^{j+1}(-1) \right) &\cong \im(\beta_j)
\end{align*}
as ${\rm W}(k)$-modules. Note that, as a consequence of the B\"ar sequence, the image of the Bockstein maps is annihilated by ${\rm I}(k)\nsubgp{\rm W}(k)$, i.e., the image of Bockstein consists of 2-torsion.

To get a presentation for ${\bf I}$-cohomology, we need formulas for multiplication of Bockstein classes. The key point here is that, since the reduction homomorphism $\rho$ is injective on the image of the Bockstein by \autoref{lem:eta-squared-torsion}, it suffices to understand the image of the Steenrod square $\Sq^2$, for which we refer to \autoref{prop:action-sq2}. Products of classes in the image of $\Sq^2$ can then be multiplied using the derivation property for $\Sq^2$. For products not involving $\theta$, the formulas are the classical ones in $H^\bullet({\rm BSO}(n),\Z)$, cf. e.g. \cite{Brown}, or \cite[Proposition~7.13]{HornbostelWendt} for a motivic version. The formulas below are basically identical to the ones for $H^\bullet({\rm BO}(n),\Z^{(t)})$, cf. \cite[Lemma~4]{Cadek}, or \cite[Definition~3.15]{Wendt-Gr} for a motivic version.

\begin{proposition}
  \label{prop:multiplication-bockstein}
  The products of Bockstein classes in (total) ${\bf I}$-cohomology of $\BSL_n^c$ are given as follows:
  \begin{align}
    \label{eq:product-beta-1}
    \beta(\bar{c}_J)\cdot\beta_{\mathcal{L}}(\bar{c}_{J'})&=\sum_{k\in J}\beta(\bar{c}_{2k})\cdot P_{(J\setminus \{k\})\cap J'}\cdot \beta_{\mathcal{L}}(\bar{c}_{\Delta(J\setminus\{k\},J')})\\
    \label{eq:product-beta-2}
    \beta_{\Theta}(\bar{c}_J)\cdot\beta_{\Theta}(\bar{c}_{J'})&=\beta(\bar{c}_J)\cdot\beta(\bar{c}_{J'})+\beta_{\Theta}(1)\cdot P_{J\cap J'}\cdot\beta_{\Theta}(\bar{c}_{\Delta(J,J')})
  \end{align}
  In these formulas, $J$ and $J'$ are index sets of the form $J=\{j_1,\dots,j_l\}$ of natural numbers $0<j_1<j_2<\cdots<j_l\leq\left[\frac{1}{2}(n-1)\right]$, and $\bar{c}_J=\bar{c}_{2j_1}\cdots\bar{c}_{2j_l}$ denotes the corresponding product of even Stiefel--Whitney classes. Similarly, $P_J=\prod_{j\in J}p_{2j}$ denotes the product of the corresponding even Pontryagin classes. Finally, $\Delta(J,J')=(J\setminus J')\cup(J'\setminus J)$ on the right-hand side is the symmetric difference of index sets. The $\beta_{\mathcal{L}}$ in \autoref{eq:product-beta-1} allows to plug in the usual $\beta$ or $\beta_{\Theta}$, but of course consistently on both sides.

  The index set $J'$ can be empty, in which case $\bar{c}_\emptyset=1$, and then $\beta(1)=0$ whereas $\beta_{\Theta}(1)=e(\mathcal{O}_{\mathbb{P}^\infty}(-1))$ is the Euler class of the square-root bundle $\mathcal{O}_{\P^\infty}(-1)=\Theta$. 

  The Bockstein classes $\beta_{\mathcal{L}}(\theta\bar{c}_J)$ can be expressed as follows
  \begin{align}
    \label{eq:beta-tcj-1}
    \beta(\theta\bar{c}_J)&=\beta_{\Theta}(\bar{c}_J)\beta_{\Theta}(1)\\
    \label{eq:beta-tcj-2}
    \beta_{\Theta}(\theta\bar{c}_J)&=\beta(\bar{c}_J)\beta_{\Theta}(1).
  \end{align}
  All products of such classes can then be determined from \autoref{eq:product-beta-1} and \autoref{eq:product-beta-2} above.
\end{proposition}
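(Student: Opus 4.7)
The plan is to reduce every claim to an identity in the mod-$2$ Chow ring $\Ch^\bullet(\BSL_n^c)=\Z/2[\bar{c}_2,\dots,\bar{c}_n,\theta]$ by applying the reduction $\rho$, whose behavior is controlled by the Steenrod-square calculus in \autoref{prop:action-sq2}. This reduction rests on two observations: first, by \autoref{prop:ring-iso-witt-cohom-bslnc} the Witt-sheaf cohomology of $\BSL_n^c$ is free as a ${\rm W}(k)$-module, so \autoref{lem:eta-squared-torsion} shows that $\rho$ is injective on the image of any Bockstein; second, $\rho\circ\beta_{\mathcal{L}}=\Sq^2_{\mathcal{L}}$ by \autoref{prop:totaro-sq2}.

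As a preliminary step I would check that every product of Bockstein classes is itself a Bockstein class, so that $\rho$-injectivity applies. By exactness of the B\"ar sequence, the image of $\beta_{\mathcal{L}}$ coincides with the kernel of the inclusion-induced map $H^\bullet(\BSL_n^c,\mathbf{I}^{\bullet+1}(\mathcal{L}))\to H^\bullet(\BSL_n^c,\mathbf{I}^\bullet(\mathcal{L}))$. Since the sheaf inclusion $\mathbf{I}^{j+1}\subseteq\mathbf{I}^j$ is compatible with the product, this kernel is closed under multiplication; combined with the splitting \eqref{eq:splitting} (where in the twisted case the Witt summand vanishes by \autoref{prop:wk-module-cohom-bslnc}), any product of Bocksteins again lies in the image of a Bockstein.

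Next I would verify each identity by computing $\rho$ on both sides. For \eqref{eq:product-beta-1}, the left-hand side becomes $\Sq^2(\bar{c}_J)\cdot\Sq^2_{\mathcal{L}}(\bar{c}_{J'})=\sum_{k\in J}\bar{c}_{2k+1}\bar{c}_{J\setminus\{k\}}\Sq^2_{\mathcal{L}}(\bar{c}_{J'})$, while the right-hand side, using $\rho(p_{2j})=\bar{c}_{2j}^2$ (read off the fiber-product description of Pontryagin classes in \autoref{thm:CHW-computation}), becomes $\sum_{k\in J}\bar{c}_{2k+1}\bar{c}_{(J\setminus\{k\})\cap J'}^2\Sq^2_{\mathcal{L}}(\bar{c}_{\Delta(J\setminus\{k\},J')})$. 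Matching these two sums is a purely combinatorial manipulation based on the mod-$2$ identity $\bar{c}_A\bar{c}_B=\bar{c}_{A\cap B}^2\bar{c}_{\Delta(A,B)}$ together with the derivation property of $\Sq^2_{\mathcal{L}}$. For \eqref{eq:product-beta-2}, substituting $\Sq^2_\Theta(x)=\theta x+\Sq^2(x)$ expands both sides, and the key simplification is that $\Sq^2$ annihilates every square $\bar{c}_A^2$ in characteristic two (by the derivation property $\Sq^2(x^2)=2x\Sq^2(x)=0$), giving $\Sq^2(\bar{c}_J\bar{c}_{J'})=\bar{c}_{J\cap J'}^2\Sq^2(\bar{c}_{\Delta(J,J')})$, which is exactly the cross-term needed for the match. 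Finally, \eqref{eq:beta-tcj-1} and \eqref{eq:beta-tcj-2} follow by the same recipe, using $\Sq^2_\Theta(1)=\theta$ and $\Sq^2(\theta)=\theta^2$.

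The main obstacle is the combinatorial matching on \eqref{eq:product-beta-1}: the equality of the two sums is not termwise but requires using $\bar{c}_A\bar{c}_B=\bar{c}_{A\cap B}^2\bar{c}_{\Delta(A,B)}$ repeatedly to split off squares of Pontryagin classes. These manipulations directly parallel those of \cite[Lemma~4]{Cadek} for $H^\bullet(\BO(n),\Z^{(t)})$ and of \cite[\S 3]{Wendt-Gr} in the motivic setting; the only new ingredients here are the extra $\theta$-terms introduced by the twisted Steenrod square $\Sq^2_\Theta$ and the appearance of the Witt-sheaf Euler class $\beta_\Theta(1)=e(\Theta)$ of the square-root bundle.
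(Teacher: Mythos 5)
Your approach matches the paper's: both reduce all four identities to computations in $\Ch^\bullet(\BSL_n^c)$ via the injectivity of $\rho$ on the image of $\beta_{\mathcal{L}}$ (from \autoref{lem:eta-squared-torsion} plus the freeness of Witt-sheaf cohomology) together with $\rho\beta_{\mathcal{L}}=\Sq^2_{\mathcal{L}}$, and both defer the core combinatorial identity underlying \autoref{eq:product-beta-1} to \v{C}adek and Wendt--Gr. One small caution on your preliminary step (which the paper leaves implicit, so it is a useful addition): to conclude that the right-hand side of \autoref{eq:product-beta-1} lies in the image of a Bockstein you need $\ker(\eta)$ to absorb Pontryagin factors, i.e.\ to be an \emph{ideal}, not merely a subring --- this follows because $\eta$-multiplication is a module map, or equivalently because the image of Bockstein is exactly the ${\rm I}(k)$-torsion part of ${\bf I}$-cohomology and torsion is absorbing under multiplication.
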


\begin{proof}
  By \autoref{lem:eta-squared-torsion} and the torsion-freeness of Witt-sheaf cohomology observed above, cf.~\eqref{eq:splitting}, it suffices to check the equalities after applying the reduction map
  \[
  \rho\colon H^q(\BSL_n^c,{\bf I}^q(\mathcal{L}))\to \Ch^q(\BSL_n^c).
  \]
  We therefore only need to verify the equalities in $\Ch^\bullet(\BSL_n^c)$, with $\beta_{\mathcal{L}}$ replaced by $\Sq^2_{\mathcal{L}}$, and with the Pontryagin classes replaced by their reductions $\rho(p_{2i})=\bar{c}_{2i}^2$, cf. \cite[Theorem~6.10]{HornbostelWendt}. 

  For \autoref{eq:product-beta-1}, we note that the case $\beta_{\mathcal{L}}=\beta$ follows from the corresponding formula for ${\bf I}$-cohomology of $\BGL_n$, cf. \cite[Definition~3.15]{Wendt-Gr}. For the case with $\beta_{\Theta}$, as well as \autoref{eq:product-beta-2}, we can just replicate the proof of \cite[Lemma~4]{Cadek}, with the appropriate replacements, as follows:
  \begin{align*}
    \rho\left(\beta(\bar{c}_J)\beta_{\Theta}(\bar{c}_{J'})\right)&=\Sq^2(\bar{c}_J)\left(\Sq^2(\bar{c}_{J'})+\theta\bar{c}_{J'}\right)=\Sq^2(\bar{c}_J)\Sq^2(\bar{c}_{J'})+\Sq^2(\bar{c}_J)\cdot \theta\cdot\bar{c}_{J'}\\
    &=\sum_{k\in J}\Sq^2(\bar{c}_{2k})\Sq^2(\bar{c}_{\Delta(J\setminus\{k\},J')})\rho(P_{(J\setminus\{k\})\cap J'})\\
    &\phantom{=}+\sum_{k\in J}\Sq^2(\bar{c}_{2k})\cdot\bar{c}_{\Delta(J\setminus\{k\},J')}\cdot\rho(P_{(J\setminus\{k\})\cap J'})\cdot \theta\\
    &=\sum_{k\in J}\Sq^2(\bar{c}_{2k})\Sq^2_{\Theta}(\bar{c}_{\Delta(J\setminus\{k\},J')})\rho(P_{(J\setminus\{k\})\cap J'})
  \end{align*}
  Here we only used the definition of $\Sq^2_{\Theta}$, the derivation property for $\Sq^2$, and the fact that $\rho(p_{2i})=\bar{c}_{2i}^2$. The end result is the reduction of the right-hand side of \autoref{eq:product-beta-1}. Similarly, for \autoref{eq:product-beta-2}, we have
  \begin{align*}
    \rho\left(\beta_{\Theta}(\bar{c}_J)\beta_{\Theta}(\bar{c}_{J'})\right)&=\left(\Sq^2(\bar{c}_J)+\theta\cdot\bar{c}_J\right)\left(\Sq^2(\bar{c}_{J'})+\theta\cdot\bar{c}_{J'}\right)\\
    &=\Sq^2(\bar{c}_J)\Sq^2(\bar{c}_{J'})+\theta\left(\bar{c}_J\cdot\Sq^2(\bar{c}_{J'})+\bar{c}_{J'}\cdot\Sq^2(\bar{c}_J)\right)+\theta^2\cdot\bar{c}_J\cdot\bar{c}_{J'}\\
    &=\Sq^2(\bar{c}_J)\Sq^2(\bar{c}_{J'})+\theta\cdot\Sq^2(\bar{c}_J\bar{c}_{J'})+\theta^2\cdot\bar{c}_{\Delta(J,J')}\cdot\rho\left(P_{J\cap J'}\right)\\
    &=\Sq^2(\bar{c}_J)\Sq^2(\bar{c}_{J'})+\theta\cdot\Sq^2(\bar{c}_{\Delta(J,J')})\rho\left(P_{J\cap J'}\right)+\theta^2\cdot\bar{c}_{\Delta(J,J')}\cdot\rho\left(P_{J\cap J'}\right)\\
    &=\Sq^2(\bar{c}_J)\Sq^2(\bar{c}_{J'})+\theta\cdot\left(\Sq^2(\bar{c}_{\Delta(J,J')})+\theta\cdot\bar{c}_{\Delta(J,J')}\right)\cdot\rho\left(P_{J\cap J'}\right)\\
    &=\Sq^2(\bar{c}_J)\Sq^2(\bar{c}_{J'})+\left(\Sq^2_{\Theta}(1)\Sq^2_{\Theta}(\bar{c}_{\Delta(J,J')})\right)\cdot\rho\left(P_{J\cap J'}\right)
  \end{align*}
  Again, we have used only the standard properties for $\Sq^2$, and end up with the reduction of the right-hand side of \autoref{eq:product-beta-2}.

To show \autoref{eq:beta-tcj-1} and \autoref{eq:beta-tcj-2}, we apply the reduction technique and check the corresponding formula for Steenrod squares:
\begin{align*}
  \Sq^2(\theta\bar{c}_J)&=\theta\Sq^2(\bar{c}_J)+\Sq^2(\theta)\bar{c}_J=\left(\Sq^2(\bar{c}_J)+\theta\bar{c}_J\right)\theta=\Sq^2_{\Theta}(\bar{c}_J)\Sq^2_{\Theta}(1)\\
  \Sq^2_{\Theta}(\theta\bar{c}_J)&=\theta\Sq^2(\bar{c}_J)+\theta^2\bar{c}_J+\theta^2\bar{c}_J=\Sq^2(\bar{c}_J)\Sq^2_{\Theta}(1).\qedhere
\end{align*}
\end{proof}

\begin{remark}
  The formula in \autoref{eq:beta-tcj-1} should be compared to a similar formula for $\BGL_n$, cf. \cite[Remark~3.18]{Wendt-Gr}. In that case, we have
  \[
  \beta(\bar{c}_1\bar{c}_J)=\beta_{\det}(\bar{c}_J)\beta_{\det}(1),
  \]
  allowing to remove $\bar{c}_1$ from Stiefel--Whitney monomials. Similarly, in the case $\BSL_n^c$, \autoref{eq:beta-tcj-1} allows to express Bocksteins $\beta_{\mathcal{L}}(\theta\bar{c}_J)$ in terms of Bocksteins $\beta_{\mathcal{L}}(\bar{c}_J)$. 
  
  A similar formula for twisted Bocksteins will be important below:
  \begin{align}
    \label{eq:odd-chern-twisted}
  \beta_{\Theta}(c_{2i+1})=\beta_{\Theta}(\theta c_{2i})=\beta_{\Theta}(1)\beta(c_{2i})
  \end{align}
  Compare this to a similar formula for untwisted Steenrod squares of odd Stiefel--Whitney classes, cf. \cite[Example~3.30]{Wendt-Gr}. This formula is helpful for reducing the necessary generators for the torsion in ${\bf I}$-cohomology.\footnote{Implicitly, we already use this, since the formula in \autoref{prop:multiplication-bockstein} only concerns Bockstein classes of products of even Chern classes.}
\end{remark}

\begin{proposition}
  \label{prop:image-bockstein}
  For either of the line bundles $\mathcal{L}=\mathcal{O},\Theta$, the image of the Bockstein homomorphisms
  \begin{align*}
    \beta_{\mathcal{L}} \colon \Ch^\bullet(\BSL_n^c) \to H^\bullet(\BSL_n^c, \mathbf{I}^\bullet(\mathcal{L}))
  \end{align*}
  agrees with the ${\rm W}(k)$-torsion in ${\bf I}$-cohomology. As a module over the non-torsion part $H^\bullet(\BSL_n^c,{\bf W})$, it is generated by the Bockstein classes $\beta_{\mathcal{L}}\left(\bar{c}_J\right)$ for $\beta_{\mathcal{L}}=\beta,\beta_{\Theta}$ and $J$ running through the admissible index sets $J=\{0<j_1<\cdots <j_l\leq\left[\frac{1}{2}(n-1)\right]\}$ with $\bar{c}_J=\bar{c}_{2j_1}\cdots\bar{c}_{2j_l}$. 
\end{proposition}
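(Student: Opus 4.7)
The proposition splits into two claims: (i) $\im(\beta_\mathcal{L})$ equals the ${\rm W}(k)$-torsion in ${\bf I}$-cohomology, and (ii) this torsion is generated over $H^\bullet(\BSL_n^c, {\bf W})$ by the stated Bockstein classes $\beta_\mathcal{L}(\bar{c}_J)$.

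Claim (i) follows directly from the splitting \eqref{eq:splitting}. Exactness of the B\"ar sequence identifies $\im(\beta_\mathcal{L})$ with the kernel of $\eta\colon H^{q+1}(\BSL_n^c, {\bf I}^{q+1}(\mathcal{L})) \to H^{q+1}(\BSL_n^c, {\bf I}^{q}(\mathcal{L}))$, so every element of $\im(\beta_\mathcal{L})$ is $\eta$-torsion, hence ${\rm W}(k)$-torsion. Conversely, the Witt-sheaf cohomology $H^{q+1}(\BSL_n^c, {\bf W})$ is free as a ${\rm W}(k)$-module by \autoref{prop:ring-iso-witt-cohom-bslnc} (and vanishes in the twisted case by \autoref{prop:wk-module-cohom-bslnc}), so any torsion class must project to zero in Witt cohomology, and by the splitting lies in $\im(\beta_\mathcal{L})$.

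For claim (ii), I would reduce Bocksteins of arbitrary Chow monomials to the stated generating set. Since $\Ch^\bullet(\BSL_n^c) = \F_2[\bar{c}_2, \dots, \bar{c}_n, \theta]$ by \autoref{cor:chow-product}, the image $\im(\beta_\mathcal{L})$ is spanned by $\beta_\mathcal{L}(m)$ for $m$ a monomial. I would reduce each such class in three steps, each verified by applying the derivation property of $\Sq^2_\mathcal{L}$ to $\rho_\mathcal{L} \circ \beta_\mathcal{L}$ and lifting via the injectivity of $\rho$ on $\im(\beta_\mathcal{L})$ from \autoref{lem:eta-squared-torsion}. First, squares of even Chern classes absorb as Pontryagin coefficients: since $\rho(p_{2i}) = \bar{c}_{2i}^2$ and $\Sq^2(\bar{c}_{2i}^2 x) = \bar{c}_{2i}^2 \Sq^2(x)$, one obtains $\beta_\mathcal{L}(\bar{c}_{2i}^2 x) = p_{2i}\, \beta_\mathcal{L}(x)$. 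For $n$ even, the class $\bar{c}_n$ itself absorbs analogously as the Euler class $e_n \in H^n(\BSL_n^c, {\bf W})$, since $\Sq^2(\bar{c}_n) = 0$ and $\rho(e_n) = \bar{c}_n$; this is precisely what restricts the admissible index range to $j \leq \lfloor (n-1)/2 \rfloor$. Second, odd Chern classes $\bar{c}_{2i+1} = \Sq^2(\bar{c}_{2i}) = \rho(\beta(\bar{c}_{2i}))$ are removed via $\beta_\mathcal{L}(\bar{c}_{2i+1}\, x) = \beta(\bar{c}_{2i}) \cdot \beta_\mathcal{L}(x)$, where the right-hand side lies in $\im(\beta_\mathcal{L})$ by claim (i) since both factors are $\eta$-torsion. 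Third, $\theta$-factors are eliminated using \eqref{eq:beta-tcj-1}--\eqref{eq:beta-tcj-2}, which trade a $\theta$ for a factor of $\beta_\Theta(1)$ and possibly toggle between $\beta$ and $\beta_\Theta$.

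These reductions yield a product of admissible Bockstein classes and even Pontryagin/Euler classes. The product formulas \eqref{eq:product-beta-1}--\eqref{eq:product-beta-2} then collapse any remaining product of two admissible Bocksteins into a single admissible Bockstein multiplied by Pontryagin classes lying in $H^\bullet(\BSL_n^c, {\bf W})$, through the symmetric-difference combinatorics of the index sets. The main obstacle is the bookkeeping to verify that iterating these reductions terminates at the stated generating set and that no additional generators are required (and to track the interaction between $\beta$ and $\beta_\Theta$ under the $\theta$-absorption); this follows the pattern of the analogous computations for $\BGL_n$ in \cite{Wendt-Gr} and for ${\rm BO}(n)$ in \cite{Cadek}, and should go through without essential new difficulties.
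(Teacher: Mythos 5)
Your proof follows the paper's approach almost exactly: claim (i) via the splitting \eqref{eq:splitting} together with the freeness of Witt-sheaf cohomology, and claim (ii) via the reduction technique (apply the derivation property of $\Sq^2_{\mathcal{L}}$, then lift using injectivity of $\rho$ on the image of Bockstein as in \autoref{lem:eta-squared-torsion}). The individual reduction steps you describe --- absorbing $\bar{c}_{2i}^2$ into $p_{2i}$, absorbing $\bar{c}_n$ into $e_n$ for $n$ even, trading $\bar{c}_{2i+1}$ for $\beta(\bar{c}_{2i})$, and trading a residual $\theta$ via \eqref{eq:beta-tcj-1}--\eqref{eq:beta-tcj-2} --- are the same moves the paper makes.

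Two small issues. First, and more substantively, your closing paragraph claims the product formulas \eqref{eq:product-beta-1}--\eqref{eq:product-beta-2} ``collapse any remaining product of two admissible Bocksteins into a single admissible Bockstein multiplied by Pontryagin classes.'' That is not what those formulas say: the right-hand side of \eqref{eq:product-beta-1} is a sum over $k\in J$ whose terms $\beta(\bar{c}_{2k})\cdot P_{(J\setminus\{k\})\cap J'}\cdot\beta_{\mathcal{L}}(\bar{c}_{\Delta(J\setminus\{k\},J')})$ are still products of \emph{two} Bockstein classes. Such a product is generally not a $W(k)$-linear combination of single $\beta_{\mathcal{L}}(\bar{c}_J)$'s (e.g.\ $\rho(\beta(\bar{c}_{2j})\beta(\bar{c}_{2j'}))=\bar{c}_{2j+1}\bar{c}_{2j'+1}$ is not $\Sq^2(\bar{c}_J)$ for any $J$). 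Fortunately, this collapse step is also unnecessary: once you have written $\beta_{\mathcal{L}}(m)$ as a polynomial in the $\beta_{\mathcal{L}}(\bar{c}_J)$, Pontryagin and Euler classes, you are done. The word ``generated'' in the statement should be read loosely, as ``the torsion lies in the subalgebra over $H^\bullet(\BSL_n^c,{\bf W})$ generated by the classes $\beta_{\mathcal{L}}(\bar{c}_J)$'' --- the paper's own proof stops exactly at the point where $\Sq^2_{\mathcal{L}}(m)$ is expressed as the reduction of such a polynomial. Drop the collapse claim rather than try to justify it. Second, a minor omission: step 3 as written only handles a single residual $\theta$; for higher powers you must first pull out the squares $\theta^2$ via the derivation property (using $\theta^2=\Sq^2(\theta)=\rho(\beta(\theta))=\rho(\beta_{\Theta}(1)^2)$), exactly as you do for $\bar{c}_{2i}^2$, before applying \eqref{eq:beta-tcj-1}--\eqref{eq:beta-tcj-2} to the last remaining $\theta$.
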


\begin{proof}
  The identification of torsion as image of Bockstein follows from the splittings discussed at the start of the section, cf.~\eqref{eq:splitting}. The property that the image of the Bockstein is annihilated by the fundamental ideal is a consequence of the B\"ar sequence.

  What we need to show is that the image of Bockstein can be generated by the classes $\beta_{\mathcal{L}}(\bar{c}_J)$. To show that, we can use the same reduction technique as in the proof of \autoref{prop:multiplication-bockstein}. Since the Bockstein maps are linear, it suffices to show that all the classes $\beta_{\mathcal{L}}(m)$ for arbitrary monomials $m$ in $\theta,c_1,\dots,c_n$ are accounted for. For that, it suffices to show that any $\Sq^2_{\mathcal{L}}(m)$ is the reduction of a polynomial in $\beta_{\mathcal{L}}(\bar{c}_J)$ and  Pontryagin classes; the injectivity of $\rho$ on the image of $\beta_{\mathcal{L}}$ then shows that the original class $\beta_{\mathcal{L}}(m)$ can be rewritten to a product of generators as claimed (and possibly some Pontryagin classes from the non-torsion part).

  We deal with untwisted Steenrod squares $\Sq^2(m)$ first. We can use the derivation property to pull out squares of $\theta$ and even Chern classes $c_{2i}$, as well as odd Chern classes $c_{2i+1}$ because $\Sq^2(c_{2i+1})=0$. The class $\theta^2$ lifts to $\beta(\theta)$ and the classes $c_{2i}^2$ lift to Pontryagin classes $p_{2i}$. Since $\Sq^2(c_{2i})=c_{2i+1}$, we can also lift the odd Chern classes. Finally, we can use \autoref{eq:beta-tcj-1} to get rid of a possible remaining $\theta$ in the monomial $m$, and we're left with a monomial $\bar{c}_J$.

  Now we deal with the twisted Steenrod squares $\Sq^2_{\Theta}(m)$. We can pull out squares of even Chern classes because of
  \[
  \Sq^2_{\Theta}(c_{2i}^2x)=c_{2i}^2\Sq^2(x)+\theta c_{2i}^2x=c_{2i}^2\Sq^2_{\Theta}(x),
  \]
  where again $c_{2i}^2$ is the reduction of the Pontryagin class $p_{2i}$. Similarly, we can pull out odd Chern classes because
  \[
  \Sq^2_{\Theta}(c_{2i+1}x)=c_{2i+1}\Sq^2(x)+\theta c_{2i+1}x=c_{2i+1}\Sq^2_{\Theta}(x),
  \]
  and $c_{2i+1}$ is the reduction of $\beta(c_{2i})$.   As a special case for $x=1$, we note the resulting formula $\beta_{\Theta}(c_{2i+1})=\beta(c_{2i})\beta_{\Theta}(1)$ which already appeared in \autoref{eq:odd-chern-twisted}. Similarly, we can pull out squares of $\theta$, and get rid of any possibly remaining $\theta$ using \autoref{eq:beta-tcj-2}.
\end{proof}

We now formulate a presentation of the ${\bf I}$-cohomology ring of $\BSL_n^c$ similar to \cite[Theorem~1.1, (3)]{Wendt-Gr}. 

\begin{theorem}
  \label{thm:I-cohom-bslnc}
  The (total) ${\bf I}^\bullet$-cohomology ring
  \[
  \bigoplus_q H^q(\BSL_n^c,{\bf I}^q)\oplus H^q(\BSL_n^c,{\bf I}^q(\Theta))
  \]
  of $\BSL_n^c$ has the following presentation, as a $\Z\oplus\Z/2\Z$-graded commutative ${\rm W}(k)$-algebra:

  \begin{itemize}
  \item The cohomology ring is generated by even Pontryagin classes $p_{2i}$ in degree $(4i,0)$, for $1\leq i\leq\left[\frac{1}{2}(n-1)\right]$, the Euler class in degree $(n,0)$ and the (twisted) Bocksteins of products of Stiefel--Whitney classes
    \[
    \beta(\bar{c}_J)=\beta(\bar{c}_{2j_1}\cdots\bar{c}_{2j_l}), \qquad \beta_{\Theta}(\bar{c}_J)=\beta_{\Theta}(\bar{c}_{2j_1}\cdots\bar{c}_{2j_l})
    \]
    with the index set $J$ running through the (possibly empty) sets $\{j_1,\dots,j_r\}$ of positive natural numbers with $0<j_1<\cdots<j_l\leq\left[\frac{1}{2}(n-1)\right]$. For an index set $J=\{j_1,\dots,j_l\}$, the degree of $\beta(\bar{c}_J)$ is $\left(1+2\sum_{i=1}^lj_i,0\right)$ and the degree of $\beta_{\Theta}(\bar{c}_J)$ is $\left(1+2\sum_{i=1}^lj_i,1\right)$. 
    \item The relations satisfied in the ${\bf I}$-cohomology ring are the following, using the notation from \autoref{prop:multiplication-bockstein}:
      \begin{enumerate}[(R1)]
      \item ${\rm I}(k)\beta(\bar{c}_J)={\rm I}(k)\beta_{\Theta}(\bar{c}_J)=0$, and $\beta(\emptyset)=\beta(1)=0$.
      \item If $n=2k+1$ is odd, we have $e_{2k+1}=\beta(\bar{c}_{2k})$.
      \item For two index sets $J,J'$, where $J'$ can be empty, we have
        \begin{align*}
          \beta(\bar{c}_J)\cdot\beta_{\mathcal{L}}(\bar{c}_{J'})&=\sum_{k\in J}\beta(\bar{c}_{2k})\cdot P_{(J\setminus \{k\})\cap J'}\cdot \beta_{\mathcal{L}}(\bar{c}_{\Delta(J\setminus\{k\},J')})\\
          \beta_{\Theta}(\bar{c}_J)\cdot\beta_{\Theta}(\bar{c}_{J'})&=\beta(\bar{c}_J)\cdot\beta(\bar{c}_{J'})+\beta_{\Theta}(1)\cdot P_{J\cap J'}\cdot\beta_{\Theta}(\bar{c}_{\Delta(J,J')}).
        \end{align*}
      \end{enumerate}
  \end{itemize}
\end{theorem}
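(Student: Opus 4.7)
The plan is to assemble the presentation by combining the additive splitting recorded in \eqref{eq:splitting} with the multiplication formulas already established. The identification of generators is essentially immediate: by \autoref{prop:ring-iso-witt-cohom-bslnc} together with \autoref{prop:witt-cohomology-bgln}, the torsion-free summand $H^\bullet(\BSL_n^c,\mathbf{W})$ is a polynomial ${\rm W}(k)$-algebra on the even Pontryagin classes $p_{2i}$, together with the Euler class $e_n$ when $n$ is even; and by \autoref{prop:image-bockstein}, the Bockstein classes $\beta(\bar{c}_J)$, $\beta_\Theta(\bar{c}_J)$ generate the torsion summand as a module over this polynomial algebra. In the twisted sector the Witt-sheaf part vanishes by \autoref{prop:wk-module-cohom-bslnc}, so the twisted cohomology is purely Bockstein image. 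This exhausts the generator list.

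Next, I verify the relations. Relation (R1) is an instance of the B\"ar sequence: the image of $\beta_{\mathcal{L}}$ is the ${\rm W}(k)$-torsion, which is annihilated by ${\rm I}(k)$, and $\beta(1)=0$ since $1$ lifts to ${\bf I}^0$. Relation (R3) is precisely \autoref{prop:multiplication-bockstein}. For (R2), in the case $n=2k+1$ the Witt-sheaf Euler class vanishes by \autoref{prop:witt-cohomology-bgln}, so $e_{2k+1}$ is a torsion class; its reduction satisfies $\rho(e_{2k+1}) = \bar{c}_{2k+1} = \Sq^2(\bar{c}_{2k}) = \rho(\beta(\bar{c}_{2k}))$, and \autoref{lem:eta-squared-torsion} (applicable because the Witt-sheaf cohomology is free as a ${\rm W}(k)$-module by \autoref{prop:wk-module-cohom-bslnc}) upgrades this to the equality $e_{2k+1}=\beta(\bar{c}_{2k})$.

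The remaining step is completeness: one must show that the abstract $\Z\oplus\Z/2\Z$-graded ${\rm W}(k)$-algebra $A$ presented by the listed generators and (R1)--(R3) maps isomorphically onto the total ${\bf I}$-cohomology of $\BSL_n^c$. Surjectivity is immediate from the first paragraph. For injectivity I split $A$ into its free and torsion parts using (R1): the quotient $A/{\rm I}(k)A$ is a polynomial algebra over ${\rm W}(k)/{\rm I}(k)$ on $p_{2i}$ and (when $n$ is even) $e_n$, matching $H^\bullet(\BSL_n^c,\mathbf{W})$ via \autoref{prop:ring-iso-witt-cohom-bslnc}. The torsion ideal of $A$ is a module over the non-torsion part, and by \autoref{lem:eta-squared-torsion} the reduction $\rho$ is injective on each $\im(\beta_{\mathcal{L}})$, so it suffices to check that translating (R3) under $\rho$ (using $\rho(p_{2i})=\bar{c}_{2i}^2$) produces exactly the relations satisfied by the classes $\Sq^2_{\mathcal{L}}(\bar{c}_J)$ inside $\Ch^\bullet(\BSL_n^c)=\Z/2[\bar{c}_2,\ldots,\bar{c}_n,\theta]$ described by \autoref{prop:action-sq2}.

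The main obstacle is the final module-theoretic verification: one must confirm that the $\rho$-images of the claimed generators, together with the reduced Pontryagin classes $\bar{c}_{2i}^2$, span a free module over the polynomial subring whose rank in each degree matches the actual torsion in ${\bf I}$-cohomology. This is a direct bookkeeping exercise on monomials in $\bar{c}_{2i},\bar{c}_{2i+1},\theta$, paralleling \cite[Theorem~1.1]{Wendt-Gr} for $\BGL_n$, with the additional variable $\theta$ and the twisted sector handled by \autoref{eq:beta-tcj-1}--\autoref{eq:beta-tcj-2} and \autoref{eq:odd-chern-twisted} to eliminate the redundant generators involving $\theta$ or odd Chern classes.
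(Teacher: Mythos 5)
Your proposal tracks the paper's proof closely and all the structural ideas are in the right places: the additive splitting of~\eqref{eq:splitting}, generator identification via \autoref{prop:ring-iso-witt-cohom-bslnc}, \autoref{prop:witt-cohomology-bgln}, \autoref{prop:image-bockstein} and \autoref{prop:wk-module-cohom-bslnc}, relations (R1) and (R3) from the B\"ar sequence and \autoref{prop:multiplication-bockstein}, and completeness via injectivity of $\rho$ on the Bockstein image. Your argument for (R2) is a small and welcome improvement over the paper: instead of importing the corresponding fact from $\BGL_n$, you observe directly that $e_{2k+1}$ lies in $\im(\beta)$ (its Witt-sheaf image vanishes, so it is torsion, and by the splitting the torsion summand equals $\im(\beta)$), compute $\rho(e_{2k+1})=\bar{c}_{2k+1}=\Sq^2(\bar{c}_{2k})$, and conclude by injectivity of $\rho$ on $\im(\beta)$. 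That is cleaner and self-contained.

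The one place you should be more careful is the completeness step. Your formulation ``it suffices to check that translating (R3) under $\rho$ produces exactly the relations satisfied by $\Sq^2_{\mathcal{L}}(\bar{c}_J)$'' is not quite the right target: what one needs is a positive injectivity statement, namely that the map from the torsion ideal of the presented algebra $A$ into $\Ch^\bullet(\BSL_n^c)$ via $\rho$ is injective. Matching relations is not the same thing; one must exhibit a ${\rm W}(k)$-module basis of $A_{\rm tor}$ and show that its image in $\Ch^\bullet$ is linearly independent. You correctly flag this as a ``bookkeeping exercise,'' but this is precisely where the paper's proof has actual content. The paper uses the multiplication formulas (R3) and the identities~\eqref{eq:beta-tcj-1}--\eqref{eq:beta-tcj-2} to reduce $A_{\rm tor}$ to a spanning set of monomials of two types (one with an even power of $\beta_\Theta(1)$ and a nonempty twisted factor $\beta_\Theta(\bar{c}_J)$, one with an odd power of $\beta_\Theta(1)$ and no such factor), and then observes that each such monomial's reduction contains a unique distinguished summand with an odd power of $\theta$, from which the exponents $l,m_i,k_i$ and the set $J$ can be read off, giving linear independence. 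Your sketch would be complete if you spelled out this monomial normal form and the $\theta$-exponent argument, or at least cited it; as written, it is a gap rather than a routine check.
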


\begin{proof}
  We first note that, as discussed at the start of the section, cf.~\eqref{eq:splitting}, we have an additive splitting for either of the two line bundles $\mathcal{L}=\mathcal{O},\Theta$:
  \[
  H^q(\BSL_n^c,{\bf I}^q(\mathcal{L}))\cong\im(\beta_{\mathcal{L}}) \oplus H^{j+1} \left( \BSL_n^c, \mathbf{W}(\mathcal{L}) \right). 
  \]

We first prove that we have described all the necessary generators, i.e., our given generators actually generate the cohomology ring. By \autoref{prop:ring-iso-witt-cohom-bslnc} and \autoref{prop:witt-cohomology-bgln}, we know that the Pontryagin classes and Euler class generate the non-torsion part given by Witt-sheaf cohomology. On the other hand, \autoref{prop:image-bockstein} shows that the image of Bockstein is generated by classes $\beta(\bar{c}_J)$ and $\beta_{\Theta}(\bar{c}_J)$, possibly involving products with Pontryagin classes. Therefore, the classes we list in the presentation generate the cohomology ring. 

For the relations, we first note that all the formulas we list in (R1-3) actually hold in the cohomology ring: the Bockstein classes are torsion by \autoref{prop:image-bockstein}, the odd-rank Euler class is a Bockstein class because this is already the case for $\BGL_n$, cf. \cite{Wendt-Gr}, and the multiplication formulas in (R3) are established in \autoref{prop:multiplication-bockstein}. 

It remains to show that all relations in the cohomology ring are accounted for in our presentation. By \autoref{prop:ring-iso-witt-cohom-bslnc} and \autoref{prop:witt-cohomology-bgln}, there are no relations for non-torsion classes in Witt-sheaf cohomology.

To show that all relations between torsion classes follow from our presentation, we again use that the reduction map $\rho$ is injective on the image of $\beta_{\mathcal{L}}$. Essentially, the idea is to use the relations for Bockstein classes to reduce to a nice generating set of monomials and then check via reduction that these are linearly independent in $\Ch^\bullet(\BSL_n^c)$ by simple arguments comparing exponents appearing in monomials. The first step in this program, as in the classical arguments of Brown \cite{Brown} and \v Cadek \cite[p.~285]{Cadek}\footnote{It is interesting to note that \v Cadek in the proof in loc.~cit. redefines the odd Stiefel--Whitney classes such that the Steenrod square acts exactly as in the cohomology of $\BSL_n^c$.}, is to use the multiplication formulas (R3) to show that the (twisted) torsion part of the $\Z\oplus\Z/2\Z$-graded ${\rm W}(k)$-algebra given by the presentation in the statement will be generated by monomials
\begin{align}
  \label{eq:monomial-first-type}
  \beta_{\Theta}(1)^{2l}\prod_{i=1}^{(n-1)/2}p_{2i}^{m_i}\prod_{i=1}^{(n-1)/2}\beta(\bar{c}_{2i})^{k_i}\beta_{\Theta}(\bar{c}_J) \textrm{ with } J\neq \emptyset, \textrm{ and}\\
  \label{eq:monomial-second-type}
  \beta_{\Theta}(1)^{2l+1}\prod_{i=1}^{(n-1)/2}p_{2i}^{m_i}\prod_{i=1}^{(n-1)/2}\beta(\bar{c}_{2i})^{k_i}.
\end{align}
Applying the reduction map to $\Ch^\bullet(\BSL_n^c)$, the monomials in \eqref{eq:monomial-second-type} map to monomials containing an odd power of $\theta$ and an even power of $c_{2i}$ (from the Pontryagin classes). The elements in \eqref{eq:monomial-first-type} reduce to a sum of monomials exactly one of which contains an odd power of $\theta$:
\[
\theta^{2l}\prod_{i=1}^{(n-1)/2}\bar{c}_{2i}^{m_i}\prod_{i=1}^{(n-1)/2}\bar{c}_{2i+1}^{k_i}\left(\Sq^2(\bar{c}_J)+\theta\bar{c}_J\right),
\]
and which is uniquely determined by the numbers $l,m_i,k_i$, and contains odd powers of $\overline{c}_{2j}$ for $j\in J$. In particular, all the monomials generating the twisted torsion will have linearly independent reductions in $\Ch^\bullet(\BSL_n^c)$, showing that we captured all relations for the torsion part.
\end{proof}

\begin{remark}
  \label{rem:comparison-bgln}
  It is interesting to observe that the torsion part of the presentation for $\BSL_n^c$ is the same as for $\BGL_n^c$. However, the natural map $\BSL_n^c\to\BGL_n$ doesn't induce an isomorphism, since it annihilates $c_1$ and doesn't hit nontrivial twisted elements for $\BSL_n^c$. Moreover, there is also a difference in the Bockstein classes of odd Chern classes which is not visible from the presentation. Bocksteins of odd Chern classes are expressible in terms of the other generators, but the expressions are different in the cases $\BGL_n$ and $\BSL_n^c$. For example, for $\BGL_n$, we have $\Sq^2(c_3)=c_1c_3$ and $\Sq^2_{\det}(c_3)=0$, whereas for $\BSL_n^c$, we have $\Sq^2(c_3)=0$ and $\Sq^2_{\Theta}(c_3)=\theta c_3$.
\end{remark}

We illustrate the description of ${\bf I}^\bullet$-cohomology in a small example, and further discuss the relation between the torsion classes for $\BSL_n$, $\BSL_n^c$ and $\BGL_n$.

\begin{example}\label{ex:I-cohom-bsl4c}
  For the case $\BSL_4^c$, the ${\bf I}$-cohomology is generated by the following classes:
  \[
  p_2, e_4, \beta(\bar{c}_2), \beta_{\Theta}(1), \beta_{\Theta}(\bar{c}_2)
  \]
  The generators have (cohomological) degrees $|p_2|=4,\ |e_4|=4,\ |\beta_{\Theta}(1)| = 1,\ |\beta(\bar{c}_2)| = |\beta_{\Theta}(\bar{c}_2)| = 3$. The Witt-sheaf cohomology of $\BSL_4^c$ is the polynomial ${\rm W}(k)$-algebra generated by $p_2$ and $e_4$. The torsion part, i.e., the image of the Bockstein maps, is generated (as ${\rm W}(k)[p_2,e_4]$-module) by $\beta(\bar{c}_2)$, $\beta_{\Theta}(1)$ and $\beta_{\Theta}(\bar{c}_2)$. There aren't many relations in the torsion part, but one special case of \autoref{eq:product-beta-2} is
  \[
  \beta_{\Theta}(\bar{c}_2)^2=\beta(\bar{c}_2)^2+\beta_{\Theta}(1)^2p_2.
  \]

  We list the first few untwisted cohomology groups, explicitly with generators:
  \begin{align*}
    H^0(\BSL_4^c,\mathbf{I}^0) &\cong {\rm W}(k) \\
    H^1(\BSL_4^c, \mathbf{I}) &= 0  \\
    H^2(\BSL_4^c, \mathbf{I}^2) &\cong \Z/2\Z\langle\beta_{\Theta}(1)^2\rangle \\
    H^3(\BSL_4^c, \mathbf{I}^3) &\cong \Z/2\Z\langle\beta(\bar{c}_2)\rangle\\
    H^4(\BSL_4^c,\mathbf{I}^4)&\cong{\rm W}(k)\langle p_2,e_4\rangle\oplus\Z/2\Z\langle \beta_{\Theta}(1)^4,\beta(\theta\bar{c}_2)=\beta_{\Theta}(\bar{c}_2)\beta_{\Theta}(1)\rangle 
  \end{align*}
  Similarly, we can list the first few twisted cohomology groups:
\begin{align*}
    H^0(\BSL_4^c,\mathbf{I}^0(-1)) &=H^2(\BSL_4^c,\mathbf{I}^0(-1))=0\\
    H^1(\BSL_4^c,\mathbf{I}(-1))&\cong \Z/2\Z\langle\beta_{\Theta}(1)\rangle\\
    H^3(\BSL_4^c,\mathbf{I}(-1))&\cong \Z/2\Z\langle\beta_{\Theta}(1)^3,\beta_{\Theta}(\bar{c}_2)\rangle\\
H^4(\BSL_4^c,\mathbf{I}(-1))&\cong \Z/2\Z\langle\beta_{\Theta}(\theta\bar{c}_2)=\beta_{\Theta}(\bar{c}_3)=\beta(\bar{c}_2)\beta_{\Theta}(1)\rangle
\end{align*}

As we discussed already in \autoref{rem:comparison-bgln} above, the image of Bockstein is very similar to the case $\BGL_n$, only that $\theta$ takes over the role of $c_1$, see \cite[Example~3.30]{Wendt-Gr}. Still, some equalities of Bockstein classes are slightly different. For example, in $\BGL_3$ we have $\beta(c_3)=\beta(c_1c_2)$, which is not true for $\BSL_4^c$, where instead we have $\beta_{\Theta}(c_3)=\beta_{\Theta}(\theta c_2)$. The reason is that for $\BGL_3$ we have $\Sq^2(\bar{c}_2)=\bar{c}_1\bar{c}_2+cbar_3$, which is different from $\BSL_4^c$ where we have $\Sq^2_{\Theta}(\bar{c}_2)=\theta \bar{c}_2+ \bar{c}_3$.
\end{example}

\begin{remark}
  \label{rem:comparison-bsln}
  The natural map $\BSL_n\to \BSL_n^c$ doesn't induce an isomorphism in ${\bf I}$-cohomology. On the untwisted part of cohomology, it induces the reduction modulo the ideal $\langle\beta_{\Theta}(1)^2\rangle$. This is a consequence of the product relation \autoref{eq:product-beta-2} which implies
  \[
  \beta_{\Theta}(\bar{c}_J)^2\equiv \beta(\bar{c}_J)^2 \bmod \beta_{\Theta}(1)^2.
  \]
  A special case of this appeared with $\beta_{\Theta}(\bar{c}_2)^2=\beta(\bar{c}_2)^2+\beta_{\Theta}(1)^2p_2$ in \autoref{ex:I-cohom-bsl4c}. 
\end{remark}

\subsection{The kernel of \texorpdfstring{$\del$}{d}} The remaining piece of our computation of the Chow--Witt groups of $\BSL_n^c$ is to compute the kernel of the homomorphism $\del$.

\begin{lemma}\label{lem:ker-bockstein-untwisted} We have that the kernel of the Bockstein map
\begin{align*}
    \beta\colon \Ch^\bullet(\BSL_n^c) \to H^\bullet(\BSL_n^c, \mathbf{I}^\bullet)
\end{align*}
is given by the subring generated by $\theta^2$, odd Chern classes, squares of even Chern classes, the top Chern class $\bar{c}_n$, and Steenrod squares of products of even Chern classes:
\begin{align*}
    \Z/2\Z \left[ \theta^2, \bar{c}_{2i+1}, \bar{c}_{2i}^2, \bar{c}_n, \Sq^2 \left( \theta^e \bar{c}_{2j_1} \cdots \bar{c}_{2j_l} \right) \right] \quad\quad e\in \left\{ 0,1 \right\},\ l\ge 0.
\end{align*}
The kernel of the composite
\begin{align*}
    \del\colon \CH^\bullet(\BSL_n^c) \to \Ch^\bullet(\BSL_n^c) \to H^\bullet(\BSL_n^c, \mathbf{I}^\bullet)
\end{align*}
is given by the subring
\begin{align*}
    \Z\left[\theta^2, c_{2i+1}, c_{2i}^2, c_n, \Sq^2 \left( \theta^e c_{2j_1}\cdots c_{2j_l}\right), 2 c_{2j_1} \cdots c_{2j_l} \right]/(c_1-2\theta) \subseteq \frac{\Z[\theta,c_1, \ldots, c_n]}{(c_1-2\theta)}.
\end{align*}
\end{lemma}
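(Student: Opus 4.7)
The plan is to first invoke \autoref{lem:eta-squared-torsion} to reduce $\ker(\beta)$ to $\ker(\Sq^2)$, then identify $\Sq^2$ on $\Ch^\bullet(\BSL_n^c)$ with a Koszul differential over a carefully chosen polynomial subring to compute this kernel, and finally lift the mod-$2$ computation to the integral kernel by taking preimages under the reduction map $\rho$.

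For the reduction, I will verify the hypothesis of \autoref{lem:eta-squared-torsion}: by \autoref{prop:ring-iso-witt-cohom-bslnc} the untwisted Witt-sheaf cohomology $H^\bullet(\BSL_n^c,\mathbf{W})$ is free as a $\mathrm{W}(k)$-module, and the splitting~\eqref{eq:splitting} realizes it as a direct summand of $H^\bullet(\BSL_n^c,\mathbf{I}^\bullet)$, ensuring $\eta^2$-torsion-freeness as discussed right after the lemma. Hence $\ker(\beta) = \ker(\Sq^2)$. The inclusion ``$\supseteq$'' of the claimed description of $\ker(\Sq^2)$ is then immediate from \autoref{prop:action-sq2}: odd Chern classes and $\bar{c}_n$ are annihilated on the nose; squares like $\theta^2$ and $\bar{c}_{2i}^2$ are killed by Leibniz in characteristic~$2$; and each $\Sq^2(\theta^e\bar{c}_J)$ lies in the kernel because $\Sq^2\circ\Sq^2 = 0$ (this operator is itself a derivation in characteristic~$2$ vanishing on all generators). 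Since the kernel of a derivation is a subring, the ring generated by these classes is contained in $\ker(\Sq^2)$.

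For the reverse inclusion, I plan to set
\[
B := \Z/2\Z[\theta^2,\, \bar{c}_{2i+1},\, \bar{c}_{2i}^2,\, \bar{c}_n]
\]
and exhibit $\Ch^\bullet(\BSL_n^c) = \Z/2\Z[\theta,\bar{c}_2,\dots,\bar{c}_n]$ as a free $B$-module with basis $\{\theta^e\bar{c}_J\}$, where $e\in\{0,1\}$ and $J$ ranges over subsets of indices of the ``source'' even Chern classes $\bar{c}_{2i}$ with $\Sq^2(\bar{c}_{2i})\neq 0$. Since each source variable $\alpha\in\{\theta,\bar{c}_2,\bar{c}_4,\dots\}$ satisfies both $\alpha^2\in B$ and $\Sq^2(\alpha)\in B$, the Leibniz rule forces $\Sq^2$ to be $B$-linear on this basis, and it is straightforward to identify it with the Koszul differential on the complex $K_\bullet(\theta^2,\bar{c}_3,\bar{c}_5,\dots;\,B)$. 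The target sequence $(\theta^2,\bar{c}_3,\bar{c}_5,\dots)$ is part of a polynomial generating set of $B$ and hence regular, so Koszul is exact in positive homological degrees. This forces $\ker(\Sq^2) = B + \Sq^2(\Ch^\bullet(\BSL_n^c))$, and the $B$-linearity of $\Sq^2$ shows the second summand is $B$-spanned by the classes $\Sq^2(\theta^e\bar{c}_J)$; together these generate exactly the subring claimed in the lemma.

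For the integral statement, I will take preimages under the mod-$2$ reduction $\rho\colon\CH^\bullet(\BSL_n^c)\to\Ch^\bullet(\BSL_n^c)$: since $\del = \beta\circ\rho$ and $\rho$ is surjective with kernel $2\CH^\bullet$, the subring $\ker(\del)$ is generated by integer lifts of the mod-$2$ kernel generators --- with $\Sq^2(\theta^e c_J)$ interpreted via the natural integer polynomial from the derivation rule $\theta\mapsto\theta^2$, $c_{2i}\mapsto c_{2i+1}$ --- together with the doubled monomials $2c_{2j_1}\cdots c_{2j_l}$, inside $\CH^\bullet(\BSL_n^c)=\Z[\theta,c_1,\dots,c_n]/(c_1-2\theta)$. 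The main obstacle is the Koszul identification in the previous paragraph: one must carefully match up the basis $\{\theta^e\bar{c}_J\}$ of the $B$-module $\Ch^\bullet(\BSL_n^c)$ with the exterior algebra basis of $K_\bullet(\theta^2,\bar{c}_3,\bar{c}_5,\dots;\,B)$ and check that $\Sq^2$ corresponds to exactly the Koszul differential. Once this identification is made, regularity of the sequence, exactness of Koszul, and the preimage argument for the integral kernel are all routine.
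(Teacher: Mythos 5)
Your reduction to $\ker(\Sq^2)$ via \autoref{lem:eta-squared-torsion} matches the paper's first step. After that, the mod-$2$ computation genuinely diverges. The paper uses $\ker\beta=\im\rho$ and reads off generators of $\im\rho$ from the presentation of the ${\bf I}$-cohomology ring in \autoref{thm:I-cohom-bslnc} (reductions of Pontryagin classes, the Euler class, and the Bockstein classes, plus a short check that products of twisted Bockstein classes reduce to elements of the form $\Sq^2(\theta\bar{c}_J)$). Your Koszul argument is a different and more self-contained route: writing $\Ch^\bullet(\BSL_n^c)$ as a free $B$-module with exterior-type basis $\{\theta^e\bar{c}_J\}$ and identifying $\Sq^2$ with the Koszul differential of the regular sequence $(\theta^2,\bar{c}_3,\bar{c}_5,\dots)$ in the polynomial ring $B$ gives $\ker(\Sq^2)=B+\im(\Sq^2)$ directly, without invoking \autoref{thm:I-cohom-bslnc}. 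The identification with the Koszul differential is correct (Leibniz forces it on the chosen $B$-basis since $\Sq^2$ annihilates $B$), and regularity is clear since the target sequence is part of a polynomial generating set of $B$. This is a clean streamlining of the verification the paper does by hand.

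The integral step, however, is not routine, and as written both your argument and the lemma's statement have a gap. Since $\ker\partial=\rho^{-1}(\ker\beta)$, the subring generated by the listed classes must contain all of $\ker\rho=2\CH^\bullet(\BSL_n^c)$. The doubled monomials you cite, $2c_{2j_1}\cdots c_{2j_l}$, involve only even Chern classes and no $\theta$. But $2\theta c_2=c_1c_2$ lies in $\ker\partial$ and, for $n\ge 3$, is \emph{not} in the subring generated by $\theta^2$, $c_1=2\theta$, $c_3$, $c_{2i}^2$, $c_n$, the integral lifts of $\Sq^2(\theta^e c_J)$, and the $2c_J$: the degree-$3$ piece of that subring is exactly $\Z c_3+2\Z\theta^3+4\Z\theta c_2$, which misses $2\theta c_2$. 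The missing generators are precisely the $2\theta c_J$; the correct doubled set, matching the $B$-module basis $\{\theta^e\bar{c}_J\}$ from your Koszul step, is $2\theta^e c_{2j_1}\cdots c_{2j_l}$ with $e\in\{0,1\}$. Declaring the preimage argument routine skips exactly the step where one checks that $2\CH^\bullet$ lies in the subring generated by the listed classes, and that is precisely where the $\theta$ factor is needed.
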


\begin{proof}
  The exactness of the B\"ar sequence implies that the kernel of $\beta$ is the image of the reduction map $\rho$. On the other hand, as remarked before, the reduction map
  \[
  \rho\colon H^q(\BSL_n^c,{\bf I}^q)\to \Ch^q(\BSL_n^c)
  \]
  is injective on the image of the Bockstein map using Lemma~\ref{lem:eta-squared-torsion}. In particular, we can alternatively compute the kernel of Bockstein as the kernel of $\Sq^2$, using the description of the Steenrod square from \autoref{prop:action-sq2}. 

  Viewing $\ker\beta$ as the image of $\rho$, combined with the fact that $\rho$ is compatible with intersection products, implies immediately that $\ker\beta$ is a subring. Moreover, the generators of ${\bf I}$-cohomology as described in \autoref{thm:I-cohom-bslnc} will provide generators for $\ker \beta$. We will discuss below how the generators claimed in the lemma arise as reductions, resp. how to see they are in the kernel of $\Sq^2$. 
 
  We first observe $c_1=2\theta$ is killed in the reduction mod two map, and we recall from \autoref{prop:action-sq2} that all other odd-index Chern classes are killed by $\Sq^2$ because $c_{2i+1}=\Sq^2(c_{2i})$, thus $\Z[c_1, c_3, \ldots] \subseteq \ker(\del)$. 

  Since $\Sq^2$ is a derivation, we observe that $\Sq^2(a^2) = 2a\Sq^2(a) \equiv0 \pmod{2}$, for any cohomology class $a$. Therefore all the squares of the remaining characteristic classes lie in the kernel. Then $\theta^2=\Sq^2(\theta)$, and $c_{2i}^2$ are the reductions of Pontryagin classes $p_{2i}$.

  The top Chern class is the reduction of the Euler class, and the classes $\Sq^2(\theta^e c_J)$ are by definition in the image of the reduction map. In \autoref{thm:I-cohom-bslnc}, we can also get elements in untwisted ${\bf I}$-cohomology as products of twisted classes $\beta_{\Theta}(\bar{c}_J)$. However, using the multiplication relations in \autoref{thm:I-cohom-bslnc}, the only additional elements we can get this way can be expressed using $\Sq^2(\theta c_J)=\Sq^2_{\Theta}(1)\Sq^2_{\Theta}(c_J)$. In particular, the elements listed in the statement generate $\ker\beta$ as subring, finishing the proof.
\end{proof}

\begin{lemma}\label{lem:ker-bockstein-twisted} We have that the kernel of the twisted Bockstein map
\begin{align*}
    \beta_{\Theta}\colon \Ch^\bullet(\BSL_n^c) \to H^\bullet(\BSL_n^c, \mathbf{I}^\bullet(-1))
\end{align*}
is the sub-$(\ker\beta)$-module of $\Ch^\bullet(\BSL_n^c)$ generated by $\Sq^2_{\Theta}(\bar{c}_J)$.
\end{lemma}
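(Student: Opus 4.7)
The plan is to identify $\ker(\beta_\Theta)$ with the image of the twisted Steenrod square $\Sq^2_\Theta$, then reduce monomial-by-monomial to the claimed generating set.

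First I would argue $\ker(\beta_\Theta) = \im(\Sq^2_\Theta)$. The B\"ar sequence gives $\ker(\beta_\Theta) = \im(\rho_\Theta)$ directly. Since the twisted Witt-sheaf cohomology vanishes by \autoref{prop:wk-module-cohom-bslnc}, the splitting from \eqref{eq:splitting} (twisted version) collapses to $H^\bullet(\BSL_n^c, \mathbf{I}^\bullet(\Theta)) = \im(\beta_\Theta)$, so every class in the image of $\rho_\Theta$ is $\rho_\Theta \beta_\Theta(x) = \Sq^2_\Theta(x)$ for some Chow class $x$. Hence $\ker(\beta_\Theta) = \im(\Sq^2_\Theta)$.

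Next I would check that $\im(\Sq^2_\Theta)$ is closed under multiplication by $\ker\beta$. From the definition $\Sq^2_\Theta(x) = \theta x + \Sq^2(x)$ and the derivation property of $\Sq^2$, one obtains the twisted derivation identity
\[
\Sq^2_\Theta(xy) = \Sq^2(x)\,y + x\,\Sq^2_\Theta(y).
\]
If $x \in \ker\beta = \ker\Sq^2$ (by \autoref{lem:eta-squared-torsion} on the reduction side), the first term vanishes, so $x \cdot \Sq^2_\Theta(y) = \Sq^2_\Theta(xy) \in \im(\Sq^2_\Theta)$. This shows $\ker(\beta_\Theta)$ is a $\ker\beta$-module and contains each $\Sq^2_\Theta(\bar{c}_J)$, giving one inclusion immediately.

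For the reverse inclusion, I would take an arbitrary monomial $m \in \Ch^\bullet(\BSL_n^c) = \Z/2[\bar{c}_2,\ldots,\bar{c}_n,\theta]$ and use the explicit description of $\ker\beta$ from \autoref{lem:ker-bockstein-untwisted} to factor $m = m_1 m_2$, where $m_2$ collects all the squares, odd-index Chern classes, and (when $n$ is even) any $\bar{c}_n$ factor — all of which lie in $\ker\beta$ — and $m_1 = \theta^{e'} \bar{c}_J$ with $e' \in \{0,1\}$ and $J$ an admissible index set (possibly empty). The twisted derivation identity then yields $\Sq^2_\Theta(m) = m_2 \cdot \Sq^2_\Theta(m_1)$. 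In the case $e' = 0$, this is already $m_2 \cdot \Sq^2_\Theta(\bar{c}_J)$, a $\ker\beta$-multiple of a listed generator.

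The one point that needs handling is the case $e' = 1$, where $m_1 = \theta \bar{c}_J$; this is the expected main obstacle, but it is resolved by reducing \autoref{eq:beta-tcj-2}. Applying $\rho$ to that identity gives $\Sq^2_\Theta(\theta \bar{c}_J) = \Sq^2(\bar{c}_J) \cdot \Sq^2_\Theta(1)$, and since $\Sq^2 \circ \Sq^2 = 0$ (because $\beta \circ \rho = 0$ in the B\"ar sequence), we have $\Sq^2(\bar{c}_J) \in \ker\Sq^2 = \ker\beta$. Thus $\Sq^2_\Theta(m) = m_2 \Sq^2(\bar{c}_J) \cdot \Sq^2_\Theta(1)$ is a $\ker\beta$-multiple of $\Sq^2_\Theta(\bar{c}_\emptyset)$. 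This exhausts all monomials, establishing the other inclusion and completing the proof.
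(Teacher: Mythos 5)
Your proof is correct. It follows the same overall skeleton as the paper's (B\"ar sequence gives $\ker\beta_{\Theta}=\im\rho_{\Theta}$; vanishing of $H^\bullet(\BSL_n^c,\mathbf{W}(\Theta))$ then forces $\ker\beta_{\Theta}=\im\Sq^2_{\Theta}$), but the two arguments diverge at the last step. The paper simply cites the already-established fact (from \autoref{prop:image-bockstein}) that the twisted $\mathbf{I}$-cohomology is generated by $\beta_{\Theta}(\bar{c}_J)$ as a module over the untwisted $\mathbf{I}$-cohomology, and then pushes this module structure through $\rho$. You instead re-derive the needed module structure from scratch at the level of $\Ch^\bullet$, using the twisted derivation identity $\Sq^2_{\Theta}(xy)=\Sq^2(x)y+x\Sq^2_{\Theta}(y)$ and an explicit monomial decomposition $m=m_1m_2$ with $m_2\in\ker\beta$ and $m_1=\theta^{e'}\bar{c}_J$, handling $e'=1$ via the reduction of \autoref{eq:beta-tcj-2}. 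This is somewhat longer and essentially reproves the twisted part of \autoref{prop:image-bockstein} in-line, but it has the virtue of being self-contained and making the $\ker\beta$-module structure explicit rather than citing it; the two approaches are otherwise equivalent in content.
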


\begin{proof}
  Again, we can use the B\"ar sequence to compute $\ker\beta_{\Theta}$ as image of the reduction morphism
  \[
  \rho_\Theta\colon H^q(\BSL_n^c,{\bf I}^q(-1))\to \Ch^q(\BSL_n^c).
  \]
  In the twisted case, there are no non-torsion classes, so $\ker\beta_{\Theta}$ actually agrees with the image of $\rho_{\Theta}$. The images of the generators $\beta_{\Theta}(\bar{c}_J)$ are $\Sq^2_{\Theta}(\bar{c}_J)$, and since the twisted ${\bf I}$-cohomology is generated by these as module over the untwisted ${\bf I}$-cohomology, we see that the image of $\rho_{\Theta}$ is generated by these classes as module over $\ker\beta$.
\end{proof}

\begin{remark} 
We take this opportunity to correct a small typo in the characterization of the kernel of the Bockstein homomorphism for the classifying space of the special linear group as in \cite[Theorem~6.10]{HornbostelWendt}. In the stated result, the Steenrod squares of products of even Chern classes should be in the kernel of the Bockstein, since they are in the image of $\rho$ as proven in \cite[Theorem~6.9]{HornbostelWendt}. Here is what the formulation should have been:
\end{remark}

\begin{quote}
\begin{theorem} The kernel of the Bockstein map
\begin{align*}
    \beta: \Ch^\bullet(\BSL_n) \to H^\bullet(\BSL_n, \mathbf{I}^\bullet)
\end{align*}
is given by the subring generated by odd Chern classes $\bar{c}_{2i+1}$, squares of even Chern classes $\bar{c}_{2i}^2$, the top Chern class $\bar{c}_n$, \textit{and the Steenrod squares of products of even Chern classes} $\Sq^2 \left( \bar{c}_{2j_1} \cdots \bar{c}_{2j_l} \right)$
\begin{align*}
    \Z/2\Z \left[ \bar{c}_{2i+1}, \bar{c}_{2i}^2, \bar{c}_n,\Sq^2 \left( \bar{c}_{2j_1} \cdots \bar{c}_{2j_l} \right)  \right] \subseteq \Z/2\Z \left[ \bar{c}_2, \ldots, \bar{c}_n \right].
\end{align*}
\end{theorem}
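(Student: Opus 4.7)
The plan is to model the proof on \autoref{lem:ker-bockstein-untwisted}, which treats the analogous statement for $\BSL_n^c$. The $\BSL_n$ case is structurally the same but strictly simpler, as there is no class $\theta$ to track. The goal is to identify $\ker(\beta)$ with the image of the reduction map $\rho\colon H^\bullet(\BSL_n,\mathbf{I}^\bullet)\to\Ch^\bullet(\BSL_n)$ via the B\"ar sequence, and then read off generators from the known presentation of the $\mathbf{I}^\bullet$-cohomology of $\BSL_n$.

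First, I would note that exactness of the B\"ar sequence immediately gives $\ker(\beta) = \im(\rho)$, and since $\rho$ is compatible with intersection products this is automatically a subring of $\Ch^\bullet(\BSL_n)$. This reduces the problem to identifying a generating set for $\im(\rho)$. As a safety check, one can invoke \autoref{lem:eta-squared-torsion} — whose hypotheses are met because $H^\bullet(\BSL_n,\mathbf{W})$ is free as a $\mathrm{W}(k)$-module by \cite{Wendt-Gr} — to also identify $\ker(\beta)$ with $\ker(\Sq^2)$; this second description is convenient for verifying membership of specific classes.

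Second, I would invoke the presentation of $H^\bullet(\BSL_n,\mathbf{I}^\bullet)$ from \cite{Wendt-Gr}: it is generated as a $\mathrm{W}(k)$-algebra by the even Pontryagin classes $p_{2i}$, the Euler class $e_n$ when $n$ is even, and the Bockstein classes $\beta(\bar{c}_J)$ for admissible index sets $J=\{j_1,\dots,j_l\}$ with $0<j_1<\cdots<j_l\leq\lfloor(n-1)/2\rfloor$. Applying $\rho$ to each generator yields $\rho(p_{2i})=\bar{c}_{2i}^2$, $\rho(e_n)=\bar{c}_n$, and $\rho(\beta(\bar{c}_J))=\Sq^2(\bar{c}_J)$ by definition of the motivic Steenrod square. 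The odd Chern classes $\bar{c}_{2i+1}=\Sq^2(\bar{c}_{2i})=\rho(\beta(\bar{c}_{2i}))$ are therefore also in $\im(\rho)$. Since $\rho$ is a ring homomorphism, these images generate $\im(\rho)=\ker(\beta)$ as a subring, producing exactly the set listed in the statement. One can cross-check directly that each listed generator lies in $\ker(\beta)$: the squares $\bar{c}_{2i}^2$ are killed because $\Sq^2$ is a derivation; the Steenrod-square classes are killed because $\Sq^2\circ\Sq^2=\rho\circ\beta\circ\rho\circ\beta=0$ using $\beta\circ\rho=0$ from the B\"ar sequence.

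The only real obstacle, and the precise reason for the correction to \cite[Theorem~6.10]{HornbostelWendt}, is recognizing that the classes $\Sq^2(\bar{c}_{2j_1}\cdots\bar{c}_{2j_l})$ with $l\geq 2$ are not polynomials in the odd Chern classes, squares of even Chern classes, and the top Chern class. Expanding $\Sq^2$ via the Leibniz rule produces mixed terms of the form $\bar{c}_{2j_1}\cdots\bar{c}_{2j_{i-1}}\bar{c}_{2j_i+1}\bar{c}_{2j_{i+1}}\cdots\bar{c}_{2j_l}$ which cannot be rewritten in the previously listed generators, so they must appear explicitly. Beyond this observation, no genuine calculation remains: the claimed generating set is by construction the $\rho$-image of a generating set for $H^\bullet(\BSL_n,\mathbf{I}^\bullet)$, so it automatically generates $\ker(\beta)$ as a subring.
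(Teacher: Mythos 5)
Your proposal is correct and follows essentially the same strategy the paper uses for the analogous statement about $\BSL_n^c$ in Lemma~\ref{lem:ker-bockstein-untwisted}: identify $\ker(\beta)$ with $\im(\rho)$ via the B\"ar sequence, use that $\rho$ respects products, and read off the generators by applying $\rho$ to the known $\mathrm{W}(k)$-algebra generators of $H^\bullet(\BSL_n,\mathbf{I}^\bullet)$ (Pontryagin classes, the Euler class, and the Bockstein classes $\beta(\bar{c}_J)$). The paper's own treatment of this corrected theorem is terser — it simply notes that the $\Sq^2(\bar{c}_J)$ lie in $\im(\rho)$ by \cite[Theorem~6.9]{HornbostelWendt} — but your fuller argument, including the Leibniz-rule explanation for why the mixed $\Sq^2(\bar{c}_{2j_1}\cdots\bar{c}_{2j_l})$ with $l\geq 2$ are genuinely new generators, is exactly the intended reasoning.
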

\end{quote}

\subsection{Description of the Chow--Witt ring}
\label{sec:pf-main-thm}

In this section, we will now combine our previous computations into a description of the Chow--Witt rings of the classifying spaces of ${\rm SL}_n^c$. 

\subsubsection{Fiber product description of Chow--Witt-groups}
In this section, we will give a proof of our main result \autoref{thm:CHW-computation}. Here is where we have to start being a bit careful about indices. For the Chow--Witt groups twisted by a line bundle $\mathcal{L}=\mathcal{O},\Theta$, we have a pullback square
\begin{equation}
  \label{eq:fiber-product}
  \begin{tikzcd}
    \CHW^j(\BSL_n^c,\mathcal{L})\rar\dar\pb & \ker(\del_{\mathcal{L},j})\dar\\
    H^j(\BSL_n^c,\mathbf{I}^j(\mathcal{L}))\rar["\rho" below] & \Ch^j(\BSL_n^c),
  \end{tikzcd}
\end{equation}
using \cite[Proposition~2.11]{HornbostelWendt} together with the 2-torsion-freeness of the Chow ring from \autoref{cor:chow-product}. The $\del_j$ on the top right is the map
\begin{align*}
    \del_{\mathcal{L},j}\colon \CH^j(\BSL_n^c) \to \Ch^j(\BSL_n^c) \xto{\beta_{\mathcal{L},j}} H^{j+1}(\BSL_n^c, \mathbf{I}^{j+1}(\mathcal{L})),
\end{align*}
while on the bottom left we have
\begin{align*}
    H^j(\BSL_n^c,\mathbf{I}^j(\mathcal{L})) &\cong \im(\beta_{\mathcal{L},j-1}) \oplus H^j(\BSL_n^c, \mathbf{W}(\mathcal{L})).
\end{align*}
So it's crucial to remember we're dealing with two different Bockstein homomorphisms here. As before, we typically denote the twisted Bocksteins by $\beta_{\Theta}$, with $\theta$ denoting the class of $\Theta$ in $\Ch^1(\BSL_n^c)=\Pic(\BSL_n^c)/2$.

The right vertical map is the mod 2 reduction map, while the reduction mod $\eta$ on the bottom is described as follows, cf.~\cite[Theorem~1.1.(4)]{Wendt-Gr}:
\begin{align*}
    H^\bullet(\BSL_n^c, \mathbf{I}^\bullet) &\xto{\rho} \Ch^\bullet(\BSL_n^c) \\
    p_{2i} &\mapsto \bar{c}_{2i}^2 \\
    e_n &\mapsto \bar{c}_n \\
    \beta(\bar{c}_J) &\mapsto \Sq^2(\bar{c}_J) 
\end{align*}
The latter follows from Totaro's identification $\rho\beta = \Sq^2$ as recalled in \autoref{prop:totaro-sq2}. Similarly, we have $\beta_{\Theta}(\bar{c}_J) \mapsto \Sq^2_{\Theta}(\bar{c}_J)$ in the description of the (twisted) reduction map
\[
H^j(\BSL_n^c,{\bf I}^j(\Theta))\to \Ch^j(\BSL_n^c)
\]
for $\mathcal{L}=\Theta$. 

The additive decomposition of the ${\bf I}^j$-cohomology induces an additive decomposition of the Chow--Witt groups of $\BSL_n^c$. Explicitly, for each $j$, we will have some decomposition of $\CHW^j(\BSL_n^c,\mathcal{L})$ into pieces that look like the integers or the Grothendieck--Witt ring of the base field:
\begin{align*}
    \CHW^j(\BSL_n^c,\mathcal{L}) \cong \GW(k)^{a_j} \oplus \Z^{b_j}.
\end{align*}
The classes contributing to Grothendieck--Witt are coming from the Witt-sheaf cohomology of $\BSL_n^c$, while the classes contributing a copy of the integers are coming from the image of the Bockstein $\im(\beta_{\mathcal{L},j-1})$ or lift from 2-divisible classes in $\CH^j(\BSL_n^c)$. It suffices to compute them separately to obtain the additive structure.

\subsubsection{Generators and relations}

We now list some of the generators of the Chow--Witt rings we will need. We begin with the natural non-torsion generators, given by Chow--Witt characteristic classes:
\begin{align*}
    p_{2i} &= \left( p_{2i},  c_{2i}^2 + 2\sum_{j=\max{0,4i-n}}^{2i-1} (-1)^j c_j c_{4i-j} \right) \\
    (n\text{ even) } \quad e_n &= (e_n, c_n).
\end{align*}
Here $p_{2i}$ are the even Pontryagin classes in Chow--Witt theory, and the equality describes the Chow--Witt-theoretic Pontryagin class as an element in the fiber product, whose image in ${\bf I}$-cohomology is again the appropriate Pontryagin class, and the image in Chow theory is $c_{2i}^2 + 2\sum_{j=\max{0,4i-n}}^{2i-1} (-1)^j c_j c_{4i-j}$. Similarly, the Chow--Witt-theoretic Euler class reduces to the ${\bf I}$-cohomological Euler class and the top Chern class in Chow theory.

Note that the definition of Pontryagin classes used here is the one from \cite[Definition~5.6]{HornbostelWendt}, in terms of the symplectification morphism $\BSL_n\to\BSp_{2n}$. (These classes would then have some desired behaviour, such as stabilization and agreement of the top class with the square of the Euler class.) The formulas then follow from their counterparts in the Chow--Witt ring of $\BGL_n$ resp. $\BSL_n$, cf.~\cite[Theorem~1.1]{Wendt-Gr} and \cite[Theorem~6.10]{HornbostelWendt}. 

Next, we have the torsion generators, giving rise to the $\mathbb{Z}$-summands in the Chow--Witt groups. The natural characteristic classes in ${\bf I}$-cohomology are the Bockstein classes
\[
\beta(\overline{c}_J)=\beta(\overline{c}_{2j_1},\dots,\overline{c}_{2j_l})\in H^q(\BSL_n^c,{\bf I}^q), \qquad
\beta_{\Theta}(\overline{c}_J)\in H^q(\BSL_n^c,{\bf I}^q(\Theta))
\]
for an index set $J=\{j_1,\dots,j_l\}$ which in the second case can be empty. Since we have $\beta_{\mathcal{L}}(\overline{c}_J)\mapsto \Sq^2_{\mathcal{L}}(\overline{c}_J)$, we can choose a lift of $\Sq^2_{\mathcal{L}}(\overline{c}_J)$ along the mod 2 reduction map $\CH^q(\BSL_n^c)\to \Ch^q(\BSL_n^c)$ and denote the resulting class
\[
\widetilde{\beta}_{\mathcal{L}}(\overline{c}_J)=\left(\beta_{\mathcal{L}}(\overline{c}_J),\widetilde{\Sq}^2_{\mathcal{L}}(\overline{c}_J)\right).
\]
We could call these classes Bockstein classes in Chow--Witt theory, but it should be noted that these are not really unique in that the Chow-part of the class involves a choice of lift along $\Sq^2_{\mathcal{L}}$.

Finally, we have some classes in Chow--Witt theory which are related to the fact that the above lifts \textit{are not unique}, since a lot of classes are killed in the reduction to $\Ch^\bullet$: any class $2x\in\CH^\bullet(\BSL_n^c)$ will have trivial reduction mod 2, and consequently $(0,2x)$ will be a valid element in $\CHW^\bullet(\BSL_n^c,\mathcal{L})$, for both $\mathcal{L}=\mathcal{O},\Theta$. 

The natural way to interpret these classes is via the injection
\[
H_{\mathcal{L}}\colon \CH^q(\BSL_n^c)\to \CHW^q(\BSL_n^c,\mathcal{L})
\]
induced from the natural morphism $2{\bf K}^{\rm M}_q\to{\bf K}^{\rm MW}_q(\mathcal{L})$ of coefficient sheaves of \autoref{eqn:ses-of-sheaves}. The image of the injection agrees with the kernel of the projection $\CHW^q(\BSL_n^c,\mathcal{L})\to H^q(\BSL_n^c,{\bf I}^q(\mathcal{L}))$. The classes in the image could be called \emph{hyperbolic Chern classes}, since the injection $H_{\mathcal{L}}$ is a version of the hyperbolic morphism from algebraic to hermitian K-theory.\footnote{Also, the image of $1\in\CH^0(\BSL_n^c)\cong\Z$ under $H\colon\CH^0(\BSL_n^c)\to\widetilde{\CH}^0(\BSL_n^c)\cong\GW(k)$ is the hyperbolic plane.} The hyperbolic Chern classes are all ${\rm I}(k)$-torsion, and explain the non-uniqueness of lifts of classes along the reduction morphism $\CHW^q(\BSL_n^c,\mathcal{L})\to H^q(\BSL_n^c,{\bf I}^q(\mathcal{L}))$. In the fiber product description of Diagram~\eqref{eq:fiber-product}, the image $H_{\mathcal{L}}(x)$ an element $x\in\CH^q(\BSL_n^c)$ is identified as the tuple $(0,2x)$, i.e., the reduction to ${\bf I}$-cohomology is trivial, and the composition
\[
\CH^q(\BSL_n^c)\xrightarrow{H_{\mathcal{L}}}\CHW^q(\BSL_n^c,\mathcal{L})\to \CH^q(\BSL_n^c)
\]
is identified with multiplication by 2.

We collect the relevant information for a description of the Chow--Witt ring of $\BSL_n^c$ in the following:

\begin{theorem}
  \label{thm:chow-witt-bslnc}
  Let $k$ be a field of characteristic $\neq 2$. The Chow--Witt ring $\CHW^\bullet(\BSL_n^c,\star)$ is generated as a $\GW(k)$-algebra by the following classes:
  \begin{itemize}
  \item the even Pontryagin classes $p_{2i}\in \CHW^{4i}(\BSL_n^c,\mathcal{O})$ for $i=1,\dots,\lfloor\frac{n-1}{2}\rfloor$,
  \item the Euler class $e_n\in\CHW^n(\BSL_n^c,\mathcal{O})$ for $n$ even,
  \item the Bockstein classes
    \[
    \widetilde{\beta}_{\mathcal{L}}(\overline{c}_J)=\widetilde{\beta}_{\mathcal{L}}(\overline{c}_{2j_1}\cdots\overline{c}_{2j_l})\in \CHW^{1+\sum_{i=1}^l 2j_i}(\BSL_n^c,\mathcal{L})
    \]
    for index sets $J=\{0<j_1<\cdots<j_l\leq\lfloor\frac{n-1}{2}\rfloor\}$ which in case $\widetilde{\beta}_\Theta$ can be empty, and
    \item the hyperbolic Chern classes $H_{\mathcal{L}}(x)\in \CHW^q(\BSL_n^c,\mathcal{L})$ for $x\in\CH^q(\BSL_n^c)$.
  \end{itemize}
  The Bockstein classes and hyperbolic Chern classes are ${\rm I}(k)$-torsion. The products can be determined using the fiber product description in Diagram~\eqref{eq:fiber-product}. In particular,
  \begin{itemize}
  \item multiplication by a hyperbolic Chern class $H_{\mathcal{L}}(x)$ can be determined using the reduction morphism $\phi\colon\CHW^q(\BSL_n^c,\mathcal{L})\to\CH^q(\BSL_n^c)$ via
    \[
    H_{\mathcal{L}}(x)\cdot y=H_{\mathcal{L}}(x\cdot\phi(y)),
    \]
  \item products of Bockstein classes are determined by relation (R3) in \autoref{thm:I-cohom-bslnc}. 
  \end{itemize}
\end{theorem}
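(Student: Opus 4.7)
The plan is to assemble the description from the fiber product presentation of the Chow--Witt ring provided by Diagram~\eqref{eq:fiber-product}; this description is available because $\CH^\bullet(\BSL_n^c)$ is $2$-torsion-free by Corollary~\ref{cor:chow-product}. Under this identification, a class in $\CHW^q(\BSL_n^c,\mathcal{L})$ is a pair $(\alpha,\gamma)$ with $\alpha\in H^q(\BSL_n^c,\mathbf{I}^q(\mathcal{L}))$ and $\gamma\in\ker(\partial_{\mathcal{L},q})\subseteq\CH^q(\BSL_n^c)$ whose mod-$2$ reductions to $\Ch^q(\BSL_n^c)$ coincide. The constituents are supplied by Theorem~\ref{thm:I-cohom-bslnc} for the $\mathbf{I}$-cohomology factor and by Lemmas~\ref{lem:ker-bockstein-untwisted} and~\ref{lem:ker-bockstein-twisted} for $\ker\partial_{\mathcal{L},q}$.

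First I would verify that each listed generator is a well-defined element of $\CHW^\bullet(\BSL_n^c,\star)$. The Pontryagin classes $p_{2i}$ and the Euler class $e_n$ are pulled back along $\BSL_n\to\BSL_n^c$, so their Chow--Witt lifts exist and the stated formulas for their Chow components follow from the corresponding ones on $\BSL_n$ in \cite[Theorem~6.10]{HornbostelWendt}, rewritten using the relation $c_1=2\theta$ in $\CH^\bullet(\BSL_n^c)$. The Bockstein classes $\widetilde{\beta}_{\mathcal{L}}(\overline{c}_J)$ are formed by pairing $\beta_{\mathcal{L}}(\overline{c}_J)$ with any lift of $\Sq^2_{\mathcal{L}}(\overline{c}_J)$ along the surjection $\CH^\bullet\twoheadrightarrow\Ch^\bullet$; the pair lies in the fiber product because $\Sq^2_{\mathcal{L}}(\overline{c}_J)=\rho\beta_{\mathcal{L}}(\overline{c}_J)$ by Proposition~\ref{prop:totaro-sq2}, and the lift automatically lies in $\ker\partial_{\mathcal{L}}$ since $\beta_{\mathcal{L}}\circ\rho=0$ by exactness of the Bär sequence. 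Finally, the hyperbolic Chern classes $H_{\mathcal{L}}(x)$ come from the sheaf morphism $2\mathbf{K}_q^{\rm M}\to\mathbf{K}_q^{\rm MW}(\mathcal{L})$ of~\eqref{eqn:ses-of-sheaves} and correspond in the fiber product to the tuples $(0,2x)$.

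To see that these classes span $\CHW^\bullet(\BSL_n^c,\star)$, pick any $(\alpha,\gamma)\in\CHW^q(\BSL_n^c,\mathcal{L})$. By Theorem~\ref{thm:I-cohom-bslnc}, the $\mathbf{I}$-cohomology component $\alpha$ is a $\mathrm{W}(k)$-combination of monomials in $p_{2i},\,e_n,\,\beta(\overline{c}_J),\,\beta_{\Theta}(\overline{c}_J)$. Substituting each generator by its Chow--Witt lift gives a class with the same $\mathbf{I}$-cohomology component as $(\alpha,\gamma)$, so the difference takes the form $(0,\delta)$ with $\delta\in\CH^q(\BSL_n^c)$ whose mod-$2$ reduction vanishes; $2$-torsion-freeness of the Chow ring then forces $\delta=2x$, and this residue equals $H_{\mathcal{L}}(x)$. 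The $\mathrm{I}(k)$-torsion assertion follows from relation~(R1) in Theorem~\ref{thm:I-cohom-bslnc} (for Bockstein classes) together with the identity $\mathrm{I}(k)\cdot h=0$ in $\GW(k)$, which annihilates any class in the image of the hyperbolic morphism.

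For the multiplicative structure, the fiber product of commutative rings carries componentwise multiplication. The identity $H_{\mathcal{L}}(x)\cdot y = H_{\mathcal{L}}(x\cdot\phi(y))$ is immediate from $(0,2x)\cdot(\alpha,\beta)=(0,2x\beta)=(0,2x\phi(y))$ in the fiber product. Products of Bockstein classes in the $\mathbf{I}$-cohomology factor are governed by relation~(R3) in Theorem~\ref{thm:I-cohom-bslnc}, and their Chow components are determined, up to an ambiguity by hyperbolic Chern classes, by requiring that reduction mod~$2$ yields $\Sq^2_{\mathcal{L}}(\overline{c}_J)\cdot\Sq^2_{\mathcal{L}}(\overline{c}_{J'})\in\Ch^\bullet$. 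The main obstacle I anticipate is essentially bookkeeping: juggling the two twists $\mathcal{L}=\mathcal{O},\Theta$ simultaneously, tracking that the lifts $\widetilde{\beta}_{\mathcal{L}}$ are canonical only modulo hyperbolic Chern classes, and verifying the consistency of the stated Pontryagin and Euler formulas after passage through $c_1=2\theta$.
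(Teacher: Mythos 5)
Your proposal is essentially the same as the paper's proof: both use the fiber product description (available because the Chow ring is $2$-torsion-free), both establish generation by lifting the generators of ${\bf I}$-cohomology from \autoref{thm:I-cohom-bslnc} and absorbing the residual ambiguity into the hyperbolic Chern classes via the short exact sequence $0\to 2{\bf K}^{\rm M}_q\to{\bf K}^{\rm MW}_q(\mathcal{L})\to{\bf I}^q(\mathcal{L})\to 0$, and both deduce the product formulas componentwise. One small wording slip: the Pontryagin and Euler classes are not ``pulled back along $\BSL_n\to\BSL_n^c$'' (that map runs the wrong way for producing classes on $\BSL_n^c$); they are rather pulled back along $\BSL_n^c\to\BGL_n$ (resp.\ defined directly from the universal quadratically oriented bundle, whose Euler class is untwisted on $\BSL_n^c$), with the stated formulas inherited from the $\BGL_n$/$\BSL_n$ computations of Hornbostel--Wendt and Wendt.
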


\begin{proof}
  As noted at the beginning of \autoref{sec:pf-main-thm}, the Chow--Witt groups of $\BSL_n^c$ have a fiber product description as in Diagram~\eqref{eq:fiber-product}. In particular, additive and multiplicative structure of the Chow--Witt rings can be determined from those of ${\bf I}$-cohomology and Chow theory.

  To prove the claim about generators, it suffices to show that all compatible pairs of elements from $H^q(\BSL_n^c,{\bf I}^q(\mathcal{L}))\times\ker(\partial_{\mathcal{L},q})$ are accounted for. By \autoref{thm:I-cohom-bslnc}, the ${\bf I}$-cohomology groups are generated (as ${\rm W}(k)$-algebra) by Pontryagin classes, Euler classes and Bockstein classes (in ${\bf I}$-cohomology). By the exact sequence
  \[
  \CH^q(\BSL_n^c)\xrightarrow{H_{\mathcal{L}}}\CHW^q(\BSL_n^c,\mathcal{L})\to H^q(\BSL_n^c,{\bf I}^q(\mathcal{L}))\to 0
  \]
  induced from the short exact sequence $0\to 2{\bf K}^{\rm M}_q\to{\bf K}^{\rm MW}_q(\mathcal{L})\to{\bf I}^q(\mathcal{L})\to 0$, the non-uniqueness of lifts from ${\bf I}$-cohomology to Chow--Witt groups is accounted for by the hyperbolic Chern classes. This shows that the classes listed indeed generate the Chow--Witt ring.

  The claims on multiplications follow directly from the fiber product description. For the hyperbolic Chern classes, $H_{\mathcal{L}}(x)=(0,2x)$ in the fiber product description, and consequently $H_{\mathcal{L}}(x)\cdot y=(0,2x\cdot\phi(y))=H_{\mathcal{L}}(x\cdot\phi(y))$. Any product with a Bockstein class in ${\bf I}$-cohomology can be determined by reduction to the mod 2 Chow ring by \autoref{thm:I-cohom-bslnc}, and from this we can compute any products of classes in the Chow--Witt ring. 
\end{proof}

\begin{remark}
  The statement of \autoref{thm:chow-witt-bslnc} doesn't provide a complete generators-and-relations presentation of the Chow--Witt ring. Although we have a list of generators, this is not minimal in any sense, due to the problems with hyperbolic Chern classes. Since
  \[
  H_{\mathcal{O}}\colon \CH^\bullet(\BSL_n^c)\to \CHW^\bullet(\BSL_n^c,\mathcal{O})
  \]
  fails to be a ring homomorphism, it is not enough to simply include hyperbolic Chern classes $H_{\mathcal{O}}(c_i)$. For example $H_{\mathcal{O}}(c_2)H_{\mathcal{O}}(c_4)=H_{\mathcal{O}}(2c_2c_4)$, which means that $H_{\mathcal{O}}(c_2c_4)$ has to be included among the generators and cannot be expressed as a product of other hyperbolic Chern classes. Also, it seems that a complete list of relations for such products would not provide much additional insight. For this reason, we do not make atttempts at producing a nice presentation. In any case, we want to point out that any products one wishes to compute can be evaluated using the fiber product description, combined with computations in ${\bf I}$-cohomology (where we have a complete presentation in \autoref{thm:I-cohom-bslnc}) and Chow theory (where we have a complete presentation in \autoref{cor:chow-product}). 
\end{remark}

\begin{remark}
  We make a brief remark on the key new characteristic class for $\BSL_n^c$: denoting, as before, by $\Theta$ the square-root of the determinant bundle, we have the Euler class, which in the fiber product description can be identified as $e(\Theta)=(\beta_{\Theta}(1),\theta)$. This class is not in the image of restriction from $\BGL_n$, and vanishes upon restriction to $\BSL_n$. In a way, via this Euler class, the characteristic classes of $\BSL_n^c$ have some information about the quadratic orientation of a vector bundle; the Euler class $e(\Theta)$ is an obstruction for a quadratically oriented bundle to be actually oriented, entirely related to the line bundle $\Theta$ used in the quadratic orientation.

  It is, however, important to note that for a variety $X$ with a non-trivial 2-torsion class $[\Theta]\in\Pic(X)$, we can have a non-trivial quadratic orientation $\Theta^{\otimes 2}\cong\mathcal{O}$ of the trivial line bundle $\mathcal{O}$ (or any oriented bundle, for that matter). In this case, the trivial bundle would still have a nontrivial characteristic class \emph{as a quadratically oriented bundle}. 
\end{remark}

\section{Real realizations of \texorpdfstring{$\BSL_n^c$}{BSLnc} and \texorpdfstring{$\MSL^c$}{MSLc}}\label{sec:real-realizations}

In this section we consider the image of $\BSL_n^c$ and the associated Thom spectrum $\MSL^c$ under real Betti realization $\Re_\R$.

\subsection{The real realization of $\BSL_n^c$}

We can ask what the real realization of the space $\BSL_n^c$ looks like, and how our computations here might reflect existing intuition about quadratically oriented topological vector bundles. One obstruction we confront is that the process of taking a classifying space and real realization functors do not commute. That is, for a general group scheme $G$ defined over the reals, it is not the case that $({\rm B}_\et G)(\R)$ and ${\rm B}(G(\R))$ are equivalent.

We take this opportunity to discuss the real realization of classifying spaces as described in \cite{MMW}, before returning to the case of $G=\SL_n^c$.

\begin{proposition}\label{prop:labelname} For $G$ a smooth real group scheme, we have equivalences
\begin{align*}
    {\rm Re}_{\mathbb{R}}({\rm B}_{\et}G) \simeq {\rm B}(G(\C))^{{\rm hC}_2}\simeq \bigsqcup_{[\tau]\in{\rm H}^1({\rm C}_2,G)}{\rm BAut}(\tau)(\mathbb{R}).
\end{align*}
\end{proposition}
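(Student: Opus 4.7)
The plan is to establish the two equivalences separately. The first, $\Re_\R(B_\et G) \simeq BG(\C)^{hC_2}$, is a statement about how real Betti realization interacts with étale classifying stacks, while the second is a purely equivariant-topological decomposition of the homotopy fixed points.

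For the first equivalence, I would factor the real realization functor through complex Betti realization followed by $C_2$-homotopy fixed points, where $C_2 = \Gal(\C/\R)$ acts by complex conjugation. Applied to $B_\et G$, the complex realization recovers the topological classifying space $BG(\C)$ equipped with its natural $C_2$-action from the real structure on $G$; this uses the comparison between étale and classical homotopy types of varieties over $\C$, extended to simplicial schemes via the bar construction on $G$. The next step is to identify the ``real points'' of the simplicial étale model of $B_\et G$ with the $C_2$-homotopy fixed points of $BG(\C)$; for a smooth real group scheme this follows from a suitable $C_2$-equivariant CW model on the bar construction, which is the content of the comparison established in \cite{MMW}.

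For the second equivalence, I would use the standard principle that homotopy fixed points of a classifying space under a group action decompose over equivariant isomorphism classes of torsors. Specifically, $\pi_0(BG(\C)^{hC_2})$ is in bijection with $C_2$-equivariant isomorphism classes of $G(\C)$-torsors over a point with $C_2$-action compatible with complex conjugation on $G$, which is precisely the nonabelian Galois cohomology set $H^1(C_2, G(\C)) = H^1(\R, G)$. For a cocycle $\tau \in Z^1(C_2, G(\C))$ representing a class $[\tau]$, the twisted Galois action on $G(\C)$ (the original action composed with conjugation by $\tau$) defines an inner form $\Aut(\tau)$ of $G$ over $\R$, and the $C_2$-equivariant automorphism group of the associated equivariant torsor is precisely $\Aut(\tau)(\R)$. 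The component of $[\tau]$ in $BG(\C)^{hC_2}$ is then of the homotopy type $B\Aut(\tau)(\R)$, and assembling these components yields the claimed disjoint union.

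The principal technical obstacle is the first step: ensuring that the factorization through $C_2$-fixed points on the realization side genuinely produces \emph{homotopy} fixed points of $BG(\C)$ rather than a strict variant. For smooth affine $G$, this is handled by approximating $BG(\C)$ by the $C_2$-equivariant geometric realizations of complements of non-free loci in $C_2$-equivariant $G$-representations, where fixed points and homotopy fixed points agree in the relevant ranges of degrees. Once this comparison is in hand, the remaining content is nonabelian descent, which is standard.
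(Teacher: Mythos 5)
The paper does not prove this proposition; it is quoted verbatim from \cite{MMW}, so there is no in-text argument to compare against. That said, your sketch has the right overall shape but contains a genuine gap in the treatment of the first equivalence.

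Your second equivalence, the decomposition $BG(\C)^{hC_2}\simeq\bigsqcup_{[\tau]}B\Aut(\tau)(\R)$, is correct and standard: $\pi_0$ of the homotopy fixed points is nonabelian Galois cohomology $H^1(C_2,G(\C))$, and the component of a cocycle $\tau$ has loop space $G(\C)^{\tau\sigma}=\Aut(\tau)(\R)$. (Strictly one must also check that the homotopy fixed points of the twisted $C_2$-space $G(\C)_\tau$ agree with the strict fixed points; this is a point worth flagging but is handled by standard Lie-theoretic arguments.)

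The problem is in the first equivalence. You describe $\mathrm{Re}_\R(B_\et G)$ via a ``$C_2$-equivariant CW model on the bar construction.'' But the simplicial bar construction $B_\bullet G$ is a model for $B_{\rm Nis}G$, not for $B_\et G$: for non-special $G$ (the interesting case here, e.g.\ $O_n$), its real realization is simply $B(G(\R))$, which is connected and hence cannot be the disjoint union in the statement. The correct model for $B_\et G$ is the Totaro-type ind-scheme $\operatorname*{colim}(V\setminus S)/G$, and it is the real points of the quotient $(V\setminus S)/G$ that must be understood. Your closing argument --- that fixed points and homotopy fixed points of the approximations agree by connectivity --- is valid for the spaces $V\setminus S$ themselves (both $(V\setminus S)(\R)$ and $(V\setminus S)(\C)$ are highly connected), but it does not transfer to the quotient: $((V\setminus S)/G)(\R)$ is \emph{not} $(V\setminus S)(\R)/G(\R)$. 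By nonabelian Galois descent, a real point of the quotient is a $G$-torsor over $\R$ together with a $G$-equivariant map to $V\setminus S$, and so $((V\setminus S)/G)(\R)$ already decomposes into pieces indexed by $H^1(\R,G)$. That descent computation is the actual content of the first equivalence, and it is exactly what your sketch omits. The argument in \cite{MMW} establishes $\mathrm{Re}_\R(B_\et G)\simeq\bigsqcup_{[\tau]}B\Aut(\tau)(\R)$ directly by this descent, and then compares to $BG(\C)^{hC_2}$ via your second equivalence; trying to prove the homotopy-fixed-point comparison first, as you do, is the harder route and your justification for it does not close the gap.
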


We briefly explain the notation to unpack what the statement is saying: The left-most space ${\rm Re}_{\mathbb{R}}({\rm B}_\et G)$ is the real realization of the geometric classifying space ${\rm B}_\et G$ in motivic homotopy. The space ${\rm B}(G(\C))^{\rm hC_2}$ in the middle is the space of ${\rm C}_2$-homotopy fixed points on the classifying space ${\rm B}G(\C)$ (of the complex Lie group $G(\C)$), with the complex conjugation action. For the description on the right-hand side, the index set is the Galois cohomology group ${\rm H}^1({\rm C}_2,G)$ parametrizing strong real forms as in \cite{adams:taibi}. For a real $G$-torsor $[\tau]\in{\rm H}^1({\rm C}_2,G)$ represented by an involution $\sigma$ on $G(\C)$, the group ${\rm Aut}(\tau)$ is the automorphism group of the torsor $\tau$, and ${\rm Aut}(\tau)(\R)=G(\C)^\sigma$ is the fixed group of the involution.

As a consequence, for groups $G$ which are not special in the sense of Serre, the real realization of the geometric classifying space ${\rm B}_\et G$ will in general not be connected. One example is the orthogonal groups, in which case we have the following description, cf. \cite[Section~7]{MMW}. 

\begin{example}
  The ${\rm O}_n$-torsors on the \'etale site over $\Spec(\R)$ are exactly the isomorphism classes of rank $n$ real quadratic forms. For an ${\rm O}_n$-torsor $\tau\in{\rm H}^1(\R,{\rm O}_n)$, represented by a quadratic form of signature $(p,q)$, the real points of its automorphism group are an indefinite orthogonal group ${\rm O}(p,q)$. The real realization of the geometric classifying space of the orthogonal groups can then be identified as
\begin{align*}
    ({\rm B}_\et{\rm O}_n)(\R) = \bigsqcup_{p+q=n} {\rm B}{\rm O}(p,q).
\end{align*}
\end{example}

\begin{example} Assume $G$ is a special group in the sense of Serre, i.e., the natural map ${\rm H}^1_{\rm Zar}(X,G)\to{\rm H}^1_{\et}(X,G)$ is a bijection. Then we have
\begin{align*}
    ({\rm B}_\et G)(\R) \simeq {\rm B}(G(\R)).
\end{align*}
This applies in particular to ${\rm SL}_n$, ${\rm GL}_n$ and ${\rm Sp}_{2n}$. 
\end{example}

Note that the fact that ${\rm GL}_n$ and ${\rm SL}_n$ are special groups is essentially equivalent to Hilbert's theorem 90, the symplectic case is essentially Darboux's theorem. Note also that the description of real realization of ${\rm B}_\et G$ for special groups $G$ doesn't need the machinery of \cite{MMW} and can be deduced more directly from Krishna's equivalence in \cite{krishna:cobordism}, which identifies the geometric classifying space with the simplicial bar construction. 

As we have seen that $\SL_n^c$ is special (\autoref{prop:slnc-special}), we have the following result.

\begin{corollary}\label{cor:real-realization-bslnc}
  The natural morphism ${\rm B}_{\rm Nis}{\rm SL}_n^c\to{\rm B}_{\et}{\rm SL}_n^c$ is an equivalence, and we have an equivalence
\begin{align*}
    {\rm Re}_{\mathbb{R}}(\BSL_n^c) \simeq {\rm B}(\SL_n^c(\R)).
\end{align*}
\end{corollary}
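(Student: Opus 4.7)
The plan is to derive this corollary directly from two ingredients already assembled in the excerpt: the speciality of $\SL_n^c$ (\autoref{prop:slnc-special}) and the general identification of real realizations for special groups discussed just above the corollary statement.

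First I would handle the identification ${\rm B}_{\rm Nis}\SL_n^c\simeq {\rm B}_\et\SL_n^c$. This is essentially a restatement of part (1) of \autoref{cor:bslnc}, which in turn reduces to \autoref{prop:slnc-special}: the speciality of $\SL_n^c$ tells us that every étale-locally trivial $\SL_n^c$-torsor on a local scheme is already Zariski-locally trivial, so the change-of-topology map ${\rm H}^1_{\rm Zar}(-,\SL_n^c)\to{\rm H}^1_\et(-,\SL_n^c)$ is a bijection. Via the Krishna equivalence cited in the surrounding discussion (and also the standard bar construction arguments for classifying spaces), this bijection promotes to an equivalence of the geometric classifying spaces in the Nisnevich and étale topologies.

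Second, for the real realization, I would invoke the identification $({\rm B}_\et G)(\R)\simeq {\rm B}(G(\R))$ that holds for any special real group scheme $G$, as recorded in the Example preceding the corollary. Since $\SL_n^c$ is special by \autoref{prop:slnc-special}, this applies directly to $G=\SL_n^c$ and gives
\[
{\rm Re}_{\mathbb{R}}({\rm B}_\et\SL_n^c)\simeq {\rm B}(\SL_n^c(\R)).
\]
Combining with the first equivalence ${\rm B}_{\rm Nis}\SL_n^c\simeq{\rm B}_\et\SL_n^c$, which identifies our $\BSL_n^c$ with ${\rm B}_\et\SL_n^c$, yields the required equivalence.

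There is really no obstacle of substance here, since the two inputs are black boxes provided in the preceding subsection. The only minor bookkeeping is to note that the general statement $({\rm B}_\et G)(\R)\simeq\bigsqcup_{[\tau]}{\rm BAut}(\tau)(\R)$ from \autoref{prop:labelname} collapses to the single component ${\rm B}(G(\R))$ in the special case, because speciality forces ${\rm H}^1({\rm C}_2,G)={\rm H}^1_\et(\R,G)$ to be trivial (or at least contain only the class of the split torsor), so no additional connected components appear.
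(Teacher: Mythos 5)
Your proposal is correct and follows the same route as the paper: the corollary is stated as an immediate consequence of \autoref{prop:slnc-special} (speciality of $\SL_n^c$) combined with the preceding Example identifying $({\rm B}_\et G)(\R)\simeq{\rm B}(G(\R))$ for special $G$, and the paper gives no further argument. Your closing remark about ${\rm H}^1({\rm C}_2,G)$ being trivial for special $G$, which collapses the disjoint union in \autoref{prop:labelname} to a single component, correctly fills in the small bookkeeping step the paper leaves implicit.
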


\begin{remark}
  \label{rem:slnc-realization}
  We can determine the group $\SL_n^c(\R)$ more precisely: it is the group of pairs $(A,u)$ of a matrix $A\in\GL_n(\R)$ and a unit $u\in\R^\times$ such that $\det(A)=u^2$. There is a natural homomorphism
  \[
  \SL_n^c(\R)\to \GL_n(\R)^+\times \R^\times\colon (A,u)\to (A,u),
  \]
  induced from the fiber product definition of $\SL_n^c$, where $\GL_n(\R)^+$ denotes the group of real $n\times n$-matrices with positive determinant. By e.g. Gram--Schmidt, we can identify $\GL_n(\R)^+ \simeq {\rm SO}(n,\R)$. Consequently, we get an equivalence
  \begin{align}
    \BSL_n^c(\R)\simeq{\rm BSO}(n,\R)\times\mathbb{RP}^\infty.
  \end{align}
  This, together with the proposition above, provides an a posteriori explanation why $\BSL_n$ and $\BSL_n^c$ have the same Witt cohomology in \autoref{eqn:same-witt-cohom-bsln-bslnc}, and why their ${\bf I}$-cohomology rings exhibit some difference in the torsion.
  
  As a consequence, the natural map $\BSL_n\to\BSL_n^c$ induces the universal covering map
  \begin{align}
    {\rm Re}_{\mathbb{R}}\left(\BSL_n\right)\simeq{\rm BSO}(n,\R)\to {\rm Re}_{\mathbb{R}}\left(\BSL_n^c\right)
  \end{align}
  on real realization. This becomes a weak equivalence upon inverting 2. Note that the action of $\pi_1\left({\rm Re}_{\mathbb{R}}\left(\BSL_n^c\right)\right)\cong\Z/2\Z$ on the higher homotopy groups of ${\rm Re}_{\mathbb{R}}\left(\BSL_n^c\right)$ is trivial (by virtue of the quadratic orientation).
\end{remark}

\subsection{Jacobson realization}

Recall that since $\BGL_n$ is cellular, we can use \cite[Theorem~5.7]{HWXZ} (plus stabilization from finite-dimensional Grassmannians) to obtain an isomorphism
\begin{align*}
    H^j(\BGL_n, \mathbf{I}^j(\mathcal{L})) \xto{\sim} H^j(\BO(n);\Z(\mathcal{L})).
\end{align*}
Although $\BSL_n$ is not cellular in the sense used in loc.~cit., we still obtain an isomorphism under the realization map.\footnote{Note that again the cellularity here means stratification in terms of affine spaces, not the more general notions of cellularity of Jannsen or Totaro.} 

\begin{proposition}
  The real cycle class map of Jacobson \cite{jacobson} is an isomorphism:
\begin{align*}
    H^j(\BSL_n, \mathbf{I}^j) \xto{\sim} H^j(\BSO(n);\Z)
\end{align*}
\end{proposition}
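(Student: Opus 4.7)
The plan is to compare the $\mathbf{I}^j$-cohomology of $\BSL_n$ to that of $\BGL_n$ via a localization sequence on the motivic side and the corresponding Gysin sequence for the double cover $\BSO(n)\to\BO(n)$ on the topological side, and then use the five-lemma to bootstrap from the cellular case $\BGL_n$.

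First I would realize $\BSL_n$ as the complement of the zero section of the line bundle $L := \det^{\ast}\mathcal{O}(-1)$ on $\BGL_n$, exactly as in the pullback description of \autoref{cor:bslnc} for $\BSL_n^c$, and visible via the ind-variety model by oriented Grassmannians $\widetilde{\Gr}(n,m)\to\Gr(n,m)$. Combining the cofiber sequence $(\BSL_n)_+\to(\BGL_n)_+\to\Th(L)$ with the Thom isomorphism for $\mathbf{I}^j$-cohomology (a shift together with a twist by the non-trivial class in $\Pic(\BGL_n)/2$) produces a long exact localization sequence
\[
\cdots \to H^{j-1}(\BGL_n, \mathbf{I}^{j-1}(L)) \xto{\cdot e(L)} H^j(\BGL_n, \mathbf{I}^j) \to H^j(\BSL_n, \mathbf{I}^j) \to H^j(\BGL_n, \mathbf{I}^j(L)) \to \cdots.
\]

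Under real realization, $\det\colon \BGL_n\to\BGm$ becomes $\det\colon \BO(n)\to\BO(1)$ and $L$ becomes the determinant real line bundle, so $\BSL_n$ realizes to $\BSO(n)$ presented as the unit sphere bundle of $L$. The analogous topological cofiber sequence $\BSO(n)_+\to\BO(n)_+\to\Th(L)$, together with the Thom isomorphism (which shifts by one and twists by the orientation local system $\Z^{w_1}$), yields a Gysin sequence
\[
\cdots \to H^{j-1}(\BO(n); \Z^{w_1}) \xto{\cdot e} H^j(\BO(n); \Z) \to H^j(\BSO(n); \Z) \to H^j(\BO(n); \Z^{w_1}) \to \cdots.
\]
Jacobson's real cycle class map is natural in line bundle twists and compatible with Thom isomorphisms, and hence produces a ladder between these two long exact sequences. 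For $\BGL_n$, both the untwisted and the $L$-twisted comparison maps are isomorphisms: this follows by applying \cite[Theorem~5.7]{HWXZ} to the cellular approximations $\Gr(n,m)$ and passing to the colimit in $m$, as mentioned in the paragraph preceding the proposition. The five-lemma then forces the comparison map for $\BSL_n$ to be an isomorphism as well.

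The main technical point to verify is the compatibility of Jacobson's real cycle class map with the Thom isomorphisms for twisted $\mathbf{I}^j$-cohomology; equivalently, that the motivic Euler class $e(L) \in H^1(\BGL_n, \mathbf{I}^1(L))$ maps to the topological twisted Euler class in $H^1(\BO(n); \Z^{w_1})$. Once this compatibility is recorded (most likely already implicit in the multiplicativity of the realization functor on cohomology of Thom spaces), the rest of the argument reduces to routine diagram chasing.
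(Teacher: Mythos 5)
Your proposal is correct but takes a genuinely different route from the paper's own proof. The paper argues directly: by \cite[Theorem~1.3]{HornbostelWendt} the two sides admit the same explicit presentation (via Pontryagin/Euler/Bockstein classes), and by \cite[\S 6]{HWXZ} Jacobson's map sends each motivic generator to its topological counterpart, so it is an isomorphism of rings by pure algebra. You instead bootstrap from the known cellular case $\BGL_n$ via the localization sequence for $\BSL_n\hookrightarrow \det^*\mathcal{O}(-1)\hookleftarrow \BGL_n$ against its topological Gysin counterpart for the sphere bundle $\BSO(n)\to\BO(n)$, and close with the five lemma. Your approach has the virtue of not needing the full computation of $H^\bullet(\BSL_n,\mathbf{I}^\bullet)$ as input --- it only uses the $\BGL_n$ case (with both twists) plus functoriality of the cycle class map in line-bundle twists and its compatibility with Thom isomorphisms. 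The paper's route is shorter once the presentations are in hand, and is the one that extends naturally to the subsequent argument for $\BSL_n^c$, where the analogous presentation is established in \autoref{thm:I-cohom-bslnc}. In both routes the genuine technical content is identical: it is exactly the compatibility of Jacobson's map with Euler classes and Thom isomorphisms, which you flag as ``most likely already implicit'' but which is in fact established explicitly in \cite[\S 6]{HWXZ} --- so you can replace that hedge with a direct citation.
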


\begin{proof}
  These groups admit the same presentation by \cite[Theorem~1.3]{HornbostelWendt}, so it suffices to check that the characteristic classes which generate the ${\bf I}^j$-cohomology are mapped to the associated characteristic classes in singular cohomology, which was shown in \cite[\S6]{HWXZ}.
\end{proof}

From \autoref{rem:slnc-realization}, we find that the real realization of $\BSL_n^c$ is $\BSO(n)\times\mathbb{RP}^\infty$, and we obtain for each of the two line bundles $\mathcal{L}=\mathcal{O},\Theta$ a commutative diagram
\[ \begin{tikzcd}
    {H^j(\BSL_n, \mathbf{I}^j)}\rar["\sim" above] & {H^j(\BSO(n);\Z)}\\
    {H^j(\BSL_n^c, \mathbf{I}^j(\mathcal{L}))}\uar \rar["\sim" below] & {H^j(\BSO(n)\times\mathbb{RP}^\infty;\Z(\mathcal{L}))} \uar
\end{tikzcd} \]
where the left vertical map is the natural one induced from the inclusion $\BSL_n\to\BSL_n^c$, the right vertical map is its real realization, and the other two morphisms are instances of Jacobson's real cycle class map. We can check that the lower-horizontal morphism is also an isomorphism: using arguments as in the proof of \autoref{thm:I-cohom-bslnc}, we can see that the singular cohomology of $\BSO(n)\times\mathbb{RP}^\infty$ has the same presentation as the ${\bf I}$-cohomology of $\BSL_n^c$. Since the latter is generated by characteristic classes which under the real cycle class map are sent to their topological counterparts, we find that the real cycle class map for $\BSL_n^c$ is also an isomorphism. As we pointed out in \autoref{rem:comparison-bsln}, the difference between cohomology of $\BSL_n$ and $\BSL_n^c$ is tied to the class $\beta_{\Theta}(1)=e(\Theta)$, which is the Euler class of the square-root line bundle $\Theta$ providing the quadratic orientation $\det\cong\Theta^{\otimes 2}$. In the real realization, the class $\beta_{\Theta}(1)=e(\Theta)$ can also be expressed as the Euler class of the pullback of the tautological line bundle on $\mathbb{RP}^\infty$.

\subsection{The Thom spectrum \texorpdfstring{${\rm MSL}^c$}{MSLc}}

Following \cite[Remark~7.11]{Hoyois-KQ}, we can define $\MSL^c$ by first defining $K^{\SL^c} \colon \mathrm{Sch}^\op \to \mathcal{S}$ by the pullback diagram
\[ \begin{tikzcd}
    K^{\SL^c}\rar\dar & K\dar["\det" right]\\
    \Pic\rar["{(-)^2}" below] & \Pic,
\end{tikzcd} \]
then defining $\MSL^c$ to be the Thom spectrum given by first taking the composition
\begin{align*}
    K^{\SL^c} \times_{\underline{\Z}} \{0\} \to K \to \Pic(\SH)
\end{align*}
and then applying the motivic Thom spectrum functor of \cite[\S16]{norms}. It is clear from this construction that $\MSL^c$ is a motivic $\mathcal{E}_\infty$ ring spectrum.

\begin{remark}
We may alternatively construct $\MSL^c$ analogously to \cite[Lemma~4.6]{bachmann:hopkins}, leveraging our construction of $\BSL_n^c$ as a \textit{metalinear Grassmannian} in \autoref{cor:BSLnc-metalinear-grassmannian}. Denoting by $\gamma_n^{{\rm SL}_n^c}$ the tautological rank $n$ quadratically oriented bundle over the metalinear Grassmannian ${\rm Gr}_n^c(\infty)$, we then get equivalences ${\rm MSL}_n^c\simeq\Sigma^{\infty-2n,n}{\rm Th}\left(\gamma_n^{{\rm SL}_n^c}\right)$ and ${\rm MSL}^c\simeq\operatornamewithlimits{colim}_n{\rm MSL}_n^c$ as in \cite[Section~4.2]{bachmann:hopkins}.
\end{remark}

\begin{remark}
  By \autoref{prop:motivic-cohomology}, the characteristic classes of a quadratically oriented bundle in motivic cohomology are Chern classes of the bundle, plus the first Chern class of the square-root bundle $\Theta$ providing the quadratic orientation. More generally, as in \cite{nandy}, we can obtain the $E$-cohomology of ${\rm MSL}^c$ for a GL-orientable theory with additive formal group law:
  \[
E^{\bullet,\bullet}({\rm MSL}^c)\cong E^{\bullet,\bullet}[\![\theta,c_1,c_2,c_3,\dots]\!]/(c_1-2\theta)
\]
  As a consequence, the motivic cohomology of $\BSL_n$ and $\BSL_n^c$ are still fairly close, and the same would be true for the spectra ${\rm MSL}$ and ${\rm MSL}^c$. This might be interesting for the evaluation of the ${\rm H}_{\rm mot}\mathbb{F}_2$-based Adams spectral sequence converging to the (suitably completed) motivic homotopy of ${\rm MSL}^c$. It would also be interesting to know where ${\rm MSL}^c$ fits in the interpolation between ${\rm MSL}$ and ${\rm MGL}$ in \cite{nandy}. 
\end{remark}

\subsection{Real realization of \texorpdfstring{${\rm MSL}^c$}{metalinear cobordism}}

In this subsection, we now want to describe the real realization of the metalinear cobordism spectrum ${\rm MSL}^c$, based on the computation of ${\rm Re}_{\mathbb{R}}(\BSL_n^c)$ above. Essentially, we follow the arguments Bachmann and Hopkins used in \cite[Section~4.2]{bachmann:hopkins} to compute the real realization of ${\rm MSL}_n$.

\begin{proposition}
  \label{prop:realization-mslnc}
  The natural morphism ${\rm MSL}_n\to{\rm MSL}^c_n$ induces an equivalence on real realization after inverting 2, i.e., there is an equivalence
  \begin{align*}
    {\rm Re}_{\mathbb{R}}({\rm MSL}_n^c)[1/2]\simeq{\rm MSO}_n[1/2].
  \end{align*}
\end{proposition}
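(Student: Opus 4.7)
The plan is to follow closely the strategy Bachmann and Hopkins use for $\MSL_n$ in \cite[Section~4.2]{bachmann:hopkins}, exploiting the Thom spectrum description $\MSL^c_n\simeq\Sigma^{\infty-2n,n}\Th(\gamma_n^{\SL_n^c})$ recalled in the preceding remark, and then transporting everything through real Betti realization.

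The first step is to recall that $\Re_{\mathbb{R}}$ is a symmetric monoidal left adjoint, and hence commutes with colimits and with the Thom-space construction on vector bundles. Together with \autoref{cor:BSLnc-metalinear-grassmannian}, this reduces the claim to identifying the real realization of the tautological rank $n$ quadratically oriented bundle $\gamma_n^{\SL_n^c}$ on $\BSL_n^c$. By \autoref{rem:slnc-realization}, we have $\Re_{\mathbb{R}}(\BSL_n^c)\simeq\BSO(n,\R)\times\RP^\infty$, and the canonical map $\BSL_n\to\BSL_n^c$ realizes to the section $\BSO(n,\R)\hookrightarrow\BSO(n,\R)\times\RP^\infty$ of the first projection $\mathrm{pr}_1$. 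Because $\Re_{\mathbb{R}}$ sends the defining fiber product of \autoref{cor:bslnc} to the homotopy fiber product of $\det\colon\BO(n)\to\RP^\infty$ with the (null-homotopic) squaring map $\RP^\infty\to\RP^\infty$, and the tautological bundle on $\BSL_n^c$ is pulled back from the tautological bundle on $\BGL_n$, the realization of $\gamma_n^{\SL_n^c}$ identifies with $\mathrm{pr}_1^*\gamma_n^{\SO}$. Consequently
\[
  \Re_{\mathbb{R}}(\MSL^c_n)\simeq\Th(\mathrm{pr}_1^*\gamma_n^{\SO})\simeq\MSO_n\wedge(\RP^\infty)_+.
\]

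To conclude, I would use that $(\RP^\infty)_+[1/2]\simeq\mathbb{S}[1/2]$: the reduced suspension spectrum $\widetilde{\Sigma}^\infty\RP^\infty$ has integral homology that is $2$-primary torsion (a copy of $\mathbb{Z}/2$ in each odd degree), so it is killed by inverting~$2$. The resulting equivalence $\Re_{\mathbb{R}}(\MSL^c_n)[1/2]\simeq\MSO_n[1/2]$ is compatible with the map from $\Re_{\mathbb{R}}(\MSL_n)\simeq\MSO_n$, since that map realizes to the Thom-spectrum map induced by the section $\BSO(n,\R)\to\BSO(n,\R)\times\RP^\infty$, which becomes an equivalence after inverting $2$. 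The main point requiring care is the identification $\Re_{\mathbb{R}}(\gamma_n^{\SL_n^c})\simeq\mathrm{pr}_1^*\gamma_n^{\SO}$; this follows from compatibility of real realization with the fiber product model of $\BSL_n^c$ in \autoref{cor:bslnc}, together with the fact that the squaring map $(-)^{\otimes 2}\colon\RP^\infty\to\RP^\infty$ is null-homotopic topologically, so the realized fiber product splits off the $\RP^\infty$-factor cleanly.
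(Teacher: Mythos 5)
Your proof is essentially correct and takes the same overall route as the paper: realize $\BSL_n^c$ as $\BSO(n,\R)\times\RP^\infty$ via \autoref{rem:slnc-realization}, identify the tautological bundle as $\mathrm{pr}_1^\ast\gamma_n^{\SO}$, conclude $\Re_\R(\MSL_n^c)\simeq\MSO_n\wedge(\RP^\infty)_+$, and use that $(\RP^\infty)_+[1/2]\simeq\mathbb{S}[1/2]$. The one step you should be more careful about is the assertion that ``$\Re_{\mathbb{R}}$ sends the defining fiber product of \autoref{cor:bslnc} to the homotopy fiber product'' --- real Betti realization is a left adjoint and does not in general preserve homotopy pullbacks, so this cannot be invoked casually. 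The paper sidesteps this by instead writing $\MSL_n^c$ as $\Sigma^{\infty-2n,n}\mathrm{cofib}(T_n\to\BSL_n^c)$ with $T_n=(\A^n\setminus\{0\})_{h\SL_n^c}$, using the specialness of $\SL_n^c$ (via Krishna's comparison of the geometric and simplicial bar constructions) to identify $\Re_\R(T_n)$ as the bar construction of the $\SL_n^c(\R)$-space $\R^n\setminus\{0\}$, and then observing directly that the fundamental representation $\SL_n^c\to\GL_n$ lands in $\GL_n(\R)^+\simeq\SO(n)$, so the realized bundle is pulled back from $\BSO(n)$. That is a more robust way to obtain the key identification $\Re_\R(\gamma_n^{\SL_n^c})\simeq\mathrm{pr}_1^\ast\gamma_n^\SO$ than an appeal to preservation of fiber products; you would do well to replace that sentence with the representation-theoretic argument. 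Your explicit justification that $(\RP^\infty)_+[1/2]\simeq\mathbb{S}[1/2]$ via the $2$-primary torsion of $\widetilde{H}_\ast(\RP^\infty;\Z)$ is a welcome addition; the paper just cites \cite[Corollary~4.7]{bachmann:hopkins} for the $\MSO_n$ identification.
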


\begin{proof}
  As in the proof of \cite[Corollary~4.7]{bachmann:hopkins}, we have 
  \begin{align*}
    {\rm MSL}_n^c\simeq\Sigma^{\infty-2n,n}{\rm cofib}\left(T_n\to \BSL^c_n\right)
  \end{align*}
  with $T_n=(\mathbb{A}^n\setminus\{0\})_{{\rm hSL}_n^c}$ the complement of the zero section of the universal rank $n$ quadratically oriented bundle. Note that here $\mathbb{A}^n\setminus\{0\}$ has the obvious ${\rm SL}_n^c$-action given by the fundamental representation ${\rm SL}_n^c\hookrightarrow{\rm GL}_n\times\mathbb{G}_{\rm m}\to{\rm GL}_n$.\footnote{Note that the representation ${\rm SL}_n^c\to{\rm GL}_n$ is not faithful, essentially it forgets about the choice of line bundle $\Theta$ in the quadratic orientation. This is in line with the usage of quadratically oriented theories: it only matters that we have Thom isomorphisms for quadratically oriented bundles, never mind the choice of quadratic orientation and the line bundle $\Theta$.} Since ${\rm SL}_n^c$ is special by \autoref{prop:slnc-special}, $T_n$ is equivalent to the bar construction of the ${\rm SL}_n^c$-space $(\mathbb{A}^n\setminus\{0\})$, using Krishna's equivalence from \cite[Proposition~3.2]{krishna:cobordism}. In particular, ${\rm Re}_{\mathbb{R}}(T_n)$ is equivalent to the bar construction of the ${\rm SL}_n^c(\R)$-space $\mathbb{R}^n\setminus\{0\}$. For a more detailed discussion of real realization of homotopy orbit spaces/quotient stacks, cf. \cite[Section~4]{MMW}.

  As observed in \autoref{rem:slnc-realization}, we have $\BSL_n^c(\R)\cong \BSO(n)\times\mathbb{RP}^\infty$, and under this identification, the ${\rm SL}_n^c(\R)$-space $\mathbb{R}^n\setminus\{0\}$ is the pullback of the universal bundle $\gamma_n$ from $\BSO(n)$. In particular, the cofiber of ${\rm Re}_{\mathbb{R}}(T_n)\to \BSL_n^c(\R)$ can be identified as
  \[
    {\rm cofib}\left({\rm Re}_{\mathbb{R}}(T_n)\to {\rm Re}_{\mathbb{R}}\left(\BSL^c_n\right)\right)\simeq {\rm cofib}\left(\gamma_n\to {\rm BSO}(n)\right) \wedge\mathbb{RP}^\infty_+
  \]
  Now, after inverting 2, the right-hand side can be identified with ${\rm MSO}_n[1/2]$, cf.~\cite[Corollary~4.7]{bachmann:hopkins} for the identification of ${\rm MSO}_n$ as real realization of ${\rm MSL}_n$. This shows that we have an equivalence ${\rm Re}_{\mathbb{R}}\left({\rm MSL}_n^c\right)[1/2]\cong{\rm MSO}_n[1/2]$.

  Note also that the realization of the natural map ${\rm SL}_n\to{\rm SL}_n^c$ is the natural inclusion ${\rm SL}_n(\R)\to{\rm SL}_n^c(\R)$. In particular, this map induces equivalences ${\rm Re}_{\mathbb{R}}({\rm MSL}_n)[1/2]\to {\rm Re}_{\mathbb{R}}({\rm MSL}_n^c)[1/2]$ as claimed.
\end{proof}

\section{Comparing \texorpdfstring{$\MSL$}{MSL} and \texorpdfstring{$\MSL^c$}{MSLc}}\label{sec:MSL-MSLc}

In this section we compare the Thom spectra $\MSL$ and $\MSL^c$ and show they are identical after inverting the Hopf element $\eta$.

\subsection{\texorpdfstring{$\eta$}{eta}-inverted theories}

As another variation, we prove a version of \autoref{prop:ring-iso-witt-cohom-bslnc} for a general ${\rm MSL}^c$-orientable cohomology theory in which $\eta$ is invertible. For the following result, we use notation from \cite{Ananyevskiy-projective-bundle} for the cohomology theory, but keep the different numbering of the Pontryagin classes. The following then describes the cohomology of $\BSL_n^c$, closely resembling \cite[Theorem~10]{Ananyevskiy-projective-bundle}, the second part is also proved in \cite[Corollary~5.3.4]{haution}.

\begin{theorem}
  \label{thm:eta-inverted-bslnc}
  Assume $A^{\bullet,\bullet}$ is a representable ${\rm SL}^c$-orientable cohomology theory, and denote $A^\bullet(X)=A^{\bullet,0}_\eta(X)$. Then we have
  \begin{align*}
  A^\bullet(\BSL_{2n}^c)& \cong A^\bullet({\rm pt})[\![p_2,\dots,p_{2n-2},e_{2n}]\!]_h\\
  A^\bullet(\BSL_{2n+1}^c)& \cong A^\bullet({\rm pt})[\![p_2,\dots,p_{2n}]\!]_h
  \end{align*}
  The twisted $A$-cohomology of $\BSL_n^c$ vanishes. In fact, the universal covering map $\BSL_n\to\BSL_n^c$ induces an isomorphism on $A$-cohomology, mapping $\SL^c$-characteristic classes to $\SL$-characteristic classes.

  More generally, the same is true for representable SL-orientable cohomology theories.
\end{theorem}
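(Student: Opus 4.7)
The strategy is to mirror the proof of \autoref{prop:ring-iso-witt-cohom-bslnc} as closely as possible. The essential inputs beyond $\SL^c$-orientability (or $\SL$-orientability after inverting $\eta$, since the distinction disappears in that setting) are (a) the vanishing of Euler classes of odd-rank bundles, and (b) Ananyevskiy's projective bundle formula \cite[Theorem~10]{Ananyevskiy-projective-bundle}. Both are consequences of inverting $\eta$ in an $\SL$-oriented theory.

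First, using Ananyevskiy's projective bundle formula, $A^\bullet(\mathbb{P}^\infty) \cong A^\bullet(\pt)$, and the cohomology of $\mathbb{P}^\infty$ twisted by $\mathcal{O}_{\mathbb{P}^\infty}(-1)$ vanishes. Combined with a K\"unneth formula (which should follow from cellularity and freeness of $A$-cohomology of the finite Grassmannians $\Gr(n,m)$, in complete analogy with \autoref{ex:witt-kunneth-grassmannians}), this gives $A^\bullet(\BGL_n \times \mathbb{P}^\infty; \mathcal{L})$ in terms of $A^\bullet(\BGL_n; \mathcal{L}')$ for the possible line bundles $\mathcal{L}'$ on $\BGL_n$, which in turn is described by Pontryagin classes (and an Euler class in even rank) again via Ananyevskiy's work.

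Second, I would apply the localization sequence from \autoref{eqn:localization-seq-bslnc}, exhibiting $\BSL_n^c$ as the complement of the zero section of the line bundle $f^*\mathcal{O}(-1) = \det \boxtimes \mathcal{O}(2)$ over $\BGL_n \times \mathbb{P}^\infty$. The connecting map is multiplication by the Euler class of this rank-one bundle, which vanishes by odd-rank hyperbolicity after $\eta$-inversion. The sequence then splits into short exact sequences
\begin{align*}
  0 \to A^\bullet(\BGL_n \times \mathbb{P}^\infty; \mathcal{L}) \to A^\bullet(\BSL_n^c; \mathcal{L}) \to A^\bullet(\BGL_n \times \mathbb{P}^\infty; \mathcal{L} \otimes f^*\mathcal{O}(-1)) \to 0.
\end{align*}
For the untwisted case, the two summands reduce to $A^\bullet(\BGL_n)$ and $A^\bullet(\BGL_n, \mathcal{O}_{\BGL_n}(-1))$ (since the $\mathcal{O}_{\mathbb{P}^\infty}(2)$ factor becomes trivial after $\eta$-inversion), and by Ananyevskiy these assemble into the claimed presentation in terms of Pontryagin classes and (for even $n$) an Euler class. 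For the twist by $\Theta = \mathcal{O}_{\mathbb{P}^\infty}(-1)$, both summands inherit a nontrivial $\mathcal{O}_{\mathbb{P}^\infty}(-1)$-twist and so vanish.

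Finally, the isomorphism induced by the covering $\BSL_n \to \BSL_n^c$ is obtained by comparing presentations: both rings are generated by Pontryagin classes and the Euler class pulled back from $\BGL_n$, with identical relations in each degree and rank parity, and the pullback along $\BSL_n \to \BSL_n^c$ sends the generators to the corresponding classes on $\BSL_n$. The extension from $\SL^c$-orientable to $\SL$-orientable theories at the end is automatic since any $\SL$-orientation gives an $\SL^c$-orientation after inverting $\eta$. The main technical obstacle I expect is verifying the K\"unneth formula in this level of generality---in particular, establishing sufficient freeness of $A^\bullet(\BGL_n)$ and $A^\bullet(\mathbb{P}^\infty)$ over $A^\bullet(\pt)$ to make the external product isomorphism go through---but this should follow routinely from the Ananyevskiy-type projective bundle arguments applied to the cellular structure of finite Grassmannians.
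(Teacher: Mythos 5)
Your proof follows essentially the same route as the paper: establish the $\BGL_n\times\P^\infty$ computation for $\eta$-inverted theories, then use the localization sequence presenting $\BSL_n^c$ as the complement of the zero section of $\det\boxtimes\O(2)$, with the odd-rank Euler class vanishing after $\eta$-inversion (the paper instead reaches the $\BGL_n$ input by extending the inductive argument of \cite[Proposition~4.8]{Wendt-Gr} via the vanishing of reduced $A$-cohomology of $\P^\infty$ as in \cite[Lemma~3.7]{nandy}, rather than via an explicit K\"unneth isomorphism, though it notes that following \cite[Theorem~10]{Ananyevskiy-projective-bundle} directly as you do is an equivalent alternative). The one place to sharpen is the final extension to $\SL$-orientable theories: the justification should be Ananyevskiy's theorem \cite[Theorem~1.1]{Ananyevskiy} that $\SL$-oriented theories admit Thom isomorphisms for $\SL^c$-bundles (as the paper cites), not a bare appeal to the slogan that $\SL$- and $\SL^c$-orientations agree after inverting $\eta$ --- the latter, if grounded in the equivalence ${\rm MSL}[\eta^{-1}]\simeq{\rm MSL}^c[\eta^{-1}]$, would be circular since that equivalence (\autoref{cor:comparison-mslc}) is deduced from the present theorem.
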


\begin{proof}
  The proof uses the same arguments as the corresponding result for Witt-sheaf cohomology in \autoref{sec:witt-sheaf}. The Witt-sheaf computation for $\BGL_n$ in \cite{Wendt-Gr} can be extended to general $\eta$-inverted theories as follows: the vanishing of reduced $A$-cohomology of $\mathbb{P}^\infty$ can be seen as in \cite[Lemma~3.7]{nandy}, and building on this as base of an induction, the argument of \cite[Proposition~4.8]{Wendt-Gr} goes through. This provides the generalization of \autoref{prop:witt-cohomology-bgln}, and the rest of the arguments in \autoref{sec:witt-sheaf} goes through. Alternatively, of course, one can follow the arguments for \cite[Theorem~10]{Ananyevskiy-projective-bundle} for $\BSL_n^c$. 

The claim that $\BSL_n\to\BSL_n^c$ induces an isomorphism on $A$-cohomology is the analogue of \autoref{eqn:same-witt-cohom-bsln-bslnc}, in the discussion after \autoref{prop:ring-iso-witt-cohom-bslnc}. The polynomial generators for $A$-cohomology of $\BSL_n$ and $\BSL_n^c$ are both induced by pullback from the Pontryagin and Euler classes on $\BGL_n$ via the natural covering maps to $\BGL_n$.
  
  The extension to SL-oriented theories uses Ananyevskiy's theorem that SL-oriented theories have Thom isomorphisms for ${\rm SL}^c$-bundles, cf.~\cite[Theorem~1.1]{Ananyevskiy}. With this, all the arguments above go through. The key point there is that we have a localization sequence for the universal quadratically oriented oriented rank $n$ bundle on $\BSL_n^c$ which together with the Thom isomorphism is used in the induction arguments.
\end{proof}

\begin{corollary}
  \label{cor:a-hlgy-mslc}
  Assume $A^{\bullet,\bullet}$ is a representable ${\rm SL}^c$-orientable cohomology theory, and denote $A^\bullet(X)=A^{\bullet,0}_\eta(X)$. Then the $A$-homology of the infinite metalinear Grassmannian $\Gr^c(2n+1,\infty)$ is the topological dual of the $A$-cohomology. The natural morphism
  \begin{align*}
    \widetilde{\Gr}(2n+1,\infty)\to\Gr^c(2n+1,\infty)
  \end{align*}
  induces an isomorphism in $A$-homology. In particular,
  \[
  A_\bullet(\MSL^c)\simeq\colim_n A_\bullet(\Gr^c(2n+1,\infty))\simeq A_\bullet[e_2,e_4,\dots]
  \]
  where we use the names for polynomial generators from \cite[Theorem~4.1(2)]{bachmann:hopkins}.
\end{corollary}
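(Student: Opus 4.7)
The plan is to combine the strong dualizability of the metalinear Grassmannians (\autoref{prop:cellular-dualizable}) with the $A$-cohomology computation of \autoref{thm:eta-inverted-bslnc} and the Thom-spectrum description of $\MSL^c$. The corollary has three assertions: the duality between homology and cohomology of $\Gr^c(2n+1,\infty)$, the $A$-homology equivalence induced by $\widetilde{\Gr}(2n+1,\infty)\to\Gr^c(2n+1,\infty)$, and the polynomial-ring computation of $A_\bullet(\MSL^c)$; I will handle them in that order.

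First, for the duality statement, by \autoref{prop:cellular-dualizable} each finite metalinear Grassmannian $\Gr^c(2n+1,m;N)$ has strongly dualizable suspension spectrum, so for any representable $\SL^c$-orientable theory $A$ there is a perfect pairing
\[
A_\bullet\bigl(\Gr^c(2n+1,m;N)\bigr)\otimes A^\bullet\bigl(\Gr^c(2n+1,m;N)\bigr)\to A_\bullet(\mathrm{pt}).
\]
Passing to the colimit in $m,N$ converts $A$-cohomology into an inverse limit and $A$-homology into a direct limit. The explicit answer in \autoref{thm:eta-inverted-bslnc} shows the cohomological inverse system is Mittag--Leffler, with power-series limit $A^\bullet(\mathrm{pt})[\![p_2,\ldots,p_{2n}]\!]_h$, and the assembled pairing exhibits $A_\bullet(\Gr^c(2n+1,\infty))$ as the continuous dual of $A^\bullet(\Gr^c(2n+1,\infty))$.

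Second, the map $\widetilde{\Gr}(2n+1,\infty)\to\Gr^c(2n+1,\infty)$ is already known by \autoref{thm:eta-inverted-bslnc} to induce an isomorphism in $A$-cohomology, identifying Pontryagin-class generators on both sides. The same dualizability argument applies to the oriented Grassmannians, as in \cite[Lemma~4.15]{bachmann:hopkins}, so duality converts this cohomological isomorphism into the desired $A$-homology isomorphism.

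Third, the Thom-spectrum description $\MSL^c\simeq\colim_n\MSL_n^c$ together with the Thom isomorphism for $\SL^c$-bundles in $\SL^c$-oriented theories gives $A_\bullet(\MSL_n^c)\cong A_{\bullet-2n,-n}(\Gr^c(n,\infty))$, so commuting $A$-homology with the filtered colimit yields $A_\bullet(\MSL^c)\simeq\colim_n A_\bullet(\Gr^c(n,\infty))$. The odd-rank subsystem is cofinal, so this colimit agrees with $\colim_n A_\bullet(\Gr^c(2n+1,\infty))$. Combining the homology equivalence of the previous step with the known polynomial presentation $A_\bullet(\MSL)\cong A_\bullet[e_2,e_4,\ldots]$ from \cite[Theorem~4.1(2)]{bachmann:hopkins} gives the final expression. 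The main obstacle is the bookkeeping in the first step: the cohomology ring in the colimit carries an $h$-adic (power-series) topology while homology is discrete, and one must check carefully that the perfect pairings at each finite stage assemble into a topological-duality pairing. Everything else is a formal consequence of strong dualizability, cofinality, and the $A$-cohomology computations already in hand.
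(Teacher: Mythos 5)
Your proposal is correct and takes essentially the same route as the paper: both hinge on the strong dualizability and cellularity of \autoref{prop:cellular-dualizable}, the $A$-cohomology computation of \autoref{thm:eta-inverted-bslnc}, and reduction to the corresponding statements for $\MSL$ in Bachmann--Hopkins, just with the colimit/duality bookkeeping spelled out in more detail than the paper's terse references to \cite[Lemma~4.16, Corollary~4.10]{bachmann:hopkins}. One minor slip: the intermediate identification $A_\bullet(\MSL_n^c)\cong A_{\bullet-2n,-n}(\Gr^c(n,\infty))$ has a spurious degree shift, since $\MSL_n^c=\Sigma^{\infty-2n,n}\Th(\gamma_n^{\SL_n^c})$ already cancels the shift from the Thom isomorphism; your subsequent unshifted colimit is the correct one, so this does not affect the argument.
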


\begin{proof}
  As in (the end of) the proof of \cite[Lemma~4.16]{bachmann:hopkins}, we can use the strong dualizability and cellularity from \autoref{prop:cellular-dualizable}, \cite[Corollary~4.10]{bachmann:hopkins} and \cite[Remark~14]{Ananyevskiy-projective-bundle} to see the claim on topological duals. From this and \autoref{thm:eta-inverted-bslnc}, we get that $\widetilde{\Gr}(2n+1,\infty)\to \Gr^c(2n+1,\infty)$ induces an isomorphism in $A$-homology. The claim about $A$-homology of $\MSL^c$ follows from the corresponding claim for $\MSL$ in \cite[Theorem~4.1(2)]{bachmann:hopkins}.
\end{proof}

\begin{remark}
  As a particular consequence, one can compute the cohomology operations for ${\rm MSL}^c[\eta^{-1}]$ from this result, as endomorphisms of ${\rm MSL}^c[\eta^{-1}]$ in $\mathcal{SH}[\eta^{-1}]$, using \autoref{thm:eta-inverted-bslnc} and Thom isomorphisms:
  \[
  \left({\rm MSL}^c_\eta\right)^{\bullet,\bullet}\left({\rm MSL}_\eta^c\right)\cong \left({\rm MSL}^c_\eta\right)^{\bullet,\bullet}[\![p_2,p_4,\dots]\!]
  \]
\end{remark}

We now want to use the computation of cohomology of $\BSL_n^c$ for $\eta$-inverted quadratically oriented cohomology theories to compare MSL and ${\rm MSL}^c$, using ideas from \cite{bachmann:hopkins}.

\begin{corollary}
  \label{cor:comparison-mslc}
  Let $k$ be a field of characteristic $\neq 2$. The natural morphism ${\rm MSL}\to{\rm MSL}^c$ becomes an equivalence in the $\eta$-inverted stable motivic homotopy $\mathcal{SH}(k)[\eta^{-1}]$. In particular, we also have 
  \[
  \underline{\pi}_*{\rm MSL}^c[\eta^{-1}]\cong\underline{\bf W}[y_2,y_4,\dots].
  \]
\end{corollary}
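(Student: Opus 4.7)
The plan is to show the cofiber $C$ of the natural map $f\colon {\rm MSL}[\eta^{-1}]\to{\rm MSL}^c[\eta^{-1}]$ vanishes in $\mathcal{SH}(k)[\eta^{-1}]$, following the strategy of Bachmann--Hopkins. First I would observe that both source and target are cellular: $\mathrm{MSL}^c$ is cellular by \autoref{prop:cellular-dualizable} together with the colimit description ${\rm MSL}^c \simeq \operatornamewithlimits{colim}_n \Sigma^{\infty-2n,n}{\rm Th}\bigl(\gamma_n^{{\rm SL}_n^c}\bigr)$, and $\mathrm{MSL}$ is cellular by \cite[Lemma~4.15]{bachmann:hopkins}. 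Hence the cofiber $C$ is a cellular spectrum in $\mathcal{SH}(k)[\eta^{-1}]$, and it suffices to check that $C$ has vanishing homology with respect to a detecting cohomology theory.

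The next step is to apply \autoref{cor:a-hlgy-mslc} with $A={\rm MSL}[\eta^{-1}]$ itself, which is representable and $\mathrm{SL}$-orientable (hence $\mathrm{SL}^c$-orientable after inverting $\eta$ by Ananyevskiy's theorem \cite[Theorem~1.1]{Ananyevskiy}). The corollary asserts that the natural morphism $\widetilde{\rm Gr}(2n+1,\infty)\to{\rm Gr}^c(2n+1,\infty)$ induces an isomorphism on $A$-homology, and passing to the colimit over Thom spaces yields an isomorphism
\[
  {\rm MSL}[\eta^{-1}]_*({\rm MSL})\xrightarrow{\sim}{\rm MSL}[\eta^{-1}]_*({\rm MSL}^c).
\]
Equivalently, $C\wedge{\rm MSL}[\eta^{-1}]\simeq 0$.

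To conclude that $C$ itself vanishes, I would invoke the detection principle from $\eta$-inverted motivic homotopy theory. By the Bachmann--Hopkins identification ${\rm MSL}[\eta^{-1}]\simeq{\rm H}\underline{\mathbf{W}}[y_2,y_4,\dots]$ (cf. \cite[Theorem~4.1]{bachmann:hopkins}), the Eilenberg--MacLane spectrum ${\rm H}\underline{\mathbf{W}}$ is a retract of ${\rm MSL}[\eta^{-1}]$ as a module spectrum (via the unit inclusion and the augmentation killing the polynomial generators $y_{2i}$). Consequently $C\wedge{\rm H}\underline{\mathbf{W}}\simeq 0$. Since $\underline{\mathbf{W}}$-homology is conservative on cellular spectra in $\mathcal{SH}(k)[\eta^{-1}]$ (a consequence of Bachmann's identification of $\mathcal{SH}(k)[\eta^{-1}]$ with a category of $\underline{\mathbf{W}}$-modules for $k$ of characteristic $\neq 2$), we conclude $C\simeq 0$ and hence $f$ is an equivalence.

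Finally, the computation of homotopy sheaves $\underline{\pi}_*{\rm MSL}^c[\eta^{-1}]\cong\underline{\mathbf{W}}[y_2,y_4,\dots]$ follows immediately by transporting the Bachmann--Hopkins computation along the equivalence $f$. The main obstacle is the detection step: one has to carefully justify (or cite) conservativity of $\underline{\mathbf{W}}$-homology on cellular objects in $\mathcal{SH}(k)[\eta^{-1}]$, which is the deepest input and where the debt to \cite{bachmann:hopkins} is most visible; the rest of the argument is a cellularity-plus-Thom-isomorphism reduction to \autoref{cor:a-hlgy-mslc}.
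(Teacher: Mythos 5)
Your overall idea --- reduce to a homology isomorphism (which you get from \autoref{cor:a-hlgy-mslc}) and then invoke a detection principle --- is the same strategy the paper uses, but the specific detection argument you propose has two unjustified steps that the paper's argument carefully avoids.

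First, the retract claim. You want a map ${\rm H}\underline{\mathbf{W}}\to{\rm MSL}[\eta^{-1}]$ split by the $0$-truncation, and you call this the ``unit inclusion.'' But the unit of ${\rm MSL}[\eta^{-1}]$ is a map $\mathbb{1}[\eta^{-1}]\to{\rm MSL}[\eta^{-1}]$, and $\mathbb{1}[\eta^{-1}]$ is not ${\rm H}\underline{\mathbf{W}}$ --- it has a complicated array of higher homotopy sheaves, which is precisely the content of \cite{bachmann:hopkins}. For ${\rm H}\underline{\mathbf{W}}$ to be a retract of ${\rm MSL}[\eta^{-1}]$ you would need ${\rm MSL}[\eta^{-1}]$ to carry an ${\rm H}\underline{\mathbf{W}}$-algebra structure, i.e.\ a ring map ${\rm H}\underline{\mathbf{W}}\to{\rm MSL}[\eta^{-1}]$; the cited Theorem~4.1 of \cite{bachmann:hopkins} is a computation of $A$-homology of ${\rm MSL}$ for ${\rm SL}$-oriented $A$, not a splitting of ${\rm MSL}[\eta^{-1}]$ as a wedge of shifted ${\rm H}\underline{\mathbf{W}}$'s, and Theorem~8.8 computes homotopy sheaves only. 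Second, and more fundamentally, the conservativity statement you invoke --- that $\underline{\mathbf{W}}$-homology is conservative on cellular objects of $\mathcal{SH}(k)[\eta^{-1}]$ because of ``Bachmann's identification of $\mathcal{SH}(k)[\eta^{-1}]$ with a category of $\underline{\mathbf{W}}$-modules'' --- does not exist. What Bachmann proves in that direction is $\mathcal{SH}(k)[\rho^{-1}]\simeq\mathcal{SH}(k_{\rm ret})$ (a $\rho$-inverted, not $\eta$-inverted, statement), while the content of \cite{bachmann:hopkins} is the ${\rm kw}$-resolution of the $2$-complete $\eta$-inverted sphere, not a categorical equivalence with $\underline{\mathbf{W}}$-modules.

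The paper sidesteps both problems with the standard arithmetic fracture route of \cite[Section~8]{bachmann:hopkins}: show $\nu$ is an equivalence after inverting $2$ (by checking on real realization, using \autoref{prop:realization-mslnc}) and after $2$-completion (by applying the ${\rm kw}$-resolution $\mathbb{1}[\eta^{-1}]_{(2)}\to{\rm kw}_{(2)}\xrightarrow{\varphi}\Sigma^4{\rm kw}_{(2)}$ and checking that $\nu$ induces a ${\rm kw}$-homology isomorphism compatible with $\varphi$, which is exactly what \autoref{cor:a-hlgy-mslc} provides for $A={\rm kw}$). This is where \autoref{cor:a-hlgy-mslc} actually gets used, and it replaces the unsupported conservativity appeal with the two-step resolution that genuinely exists. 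Your cellularity and dualizability observations via \autoref{prop:cellular-dualizable} are correct and are indeed part of the picture (they enter through \autoref{cor:a-hlgy-mslc}), but the final detection step needs to be routed through fracture and the ${\rm kw}$-resolution rather than through an $H\underline{\mathbf{W}}$-detection principle.
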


\begin{proof}
  As in the homotopy computations in \cite[Section~8]{bachmann:hopkins}, it suffices to check that the map $\nu\colon{\rm MSL}\to{\rm MSL}^c$ is an equivalence both after inverting 2 and after completing at 2. We can check $\nu[1/2]$ is an equivalence on real realizations, using \autoref{prop:realization-mslnc}.

  It remains to show that $\nu_{(2)}$ is an equivalence. Recall from \cite[Theorem~1.1]{bachmann:hopkins} that there is a resolution
  \[
  \mathbb{S}[\eta^{-1}]_{(2)}\to {\rm kw}_{(2)}\xrightarrow{\varphi}\Sigma^4{\rm kw}_{(2)}
  \]
  of the $\eta$-local sphere in terms of the connective Balmer--Witt K-theory spectrum kw. Because of this resolution, it suffices to show that the natural morphism ${\rm MSL}\to{\rm MSL}^c$ induces an equivalence in (2-completed) kw-homology, compatible with the map $\varphi$. Using \autoref{cor:a-hlgy-mslc}, we have isomorphisms
  \[
    {\rm kw}_*[e_2,e_4,\dots] \cong \pi_*({\rm kw}\wedge{\rm MSL})\xrightarrow{\cong} \pi_*({\rm kw}\wedge{\rm MSL}^c).
  \]
Since this isomorphism is in fact induced (via Thom isomorphism) from an kw-homology isomorphism ${\rm kw}_*(\widetilde{\Gr}(2n+1,\infty))\to{\rm kw}_*(\Gr^c(2n+1,\infty))$, we see that the polynomial generators arise from the cells of $\mathbb{HP}^\infty$, both for MSL and ${\rm MSL}^c$. In particular, the kw-homology isomorphism above is compatible with the map $\varphi$, which shows that $\nu_{(2)}$ is an equivalence.

The claim on the homotopy sheaves is then immediate from \cite[Theorem~8.8]{bachmann:hopkins}, but can alternatively also be proved just as in loc.~cit.
\end{proof}

\begin{remark}
  It is well-known that SL-orientations and ${\rm SL}^c$-orientations are very closely related. One instance of this is Ananyevskiy's theorem that SL-oriented theories have Thom isomorphisms for ${\rm SL}^c$-bundles, cf.~\cite[Theorem~1.1]{Ananyevskiy}, which we have already used above. The $\eta$-local equivalence ${\rm MSL}[\eta^{-1}]\to {\rm MSL}^c[\eta^{-1}]$ is another version of this close connection, showing that, after inverting $\eta$, there is really no difference between SL-orientations and ${\rm SL}^c$-orientations.
\end{remark}

\printbibliography
\end{document}